\theoremstyle{plain}
\newtheorem{prop}{Proposition}[section]
\newtheorem{coro}[prop]{Corollary}
\newtheorem{lemm}[prop]{Lemma}
\newtheorem{thrm}[prop]{Theorem}
\newtheorem*{thrmA}{Theorem A}
\theoremstyle{remark}
\newtheorem{rema}[prop]{Remark}
\theoremstyle{definition}
\newtheorem{defi}[prop]{Definition}
\newtheorem{exam}[prop]{Example}
\numberwithin{equation}{section}
\newdimen\arrayitem
\def\arrayitem{3mm}
\newcommand\SM[1]{\hbox to \arrayitem{\hfil$#1$\hfil}}
\def\0{\hbox to \arrayitem{\hfil$\cdot$\hfil}}
\def\1{\hbox to \arrayitem{\hfil$1$\hfil}}
\def\mo{\hbox to \arrayitem{\hfil$-\!1$\hfil}}
\def\2{\hbox to \arrayitem{\hfil$2$\hfil}}
\def\3{\hbox to \arrayitem{\hfil$3$\hfil}}
\def\4{\hbox to \arrayitem{\hfil$4$\hfil}}
\def\5{\hbox to \arrayitem{\hfil$5$\hfil}}
\def\6{\hbox to \arrayitem{\hfil$6$\hfil}}
\def\7{\hbox to \arrayitem{\hfil$7$\hfil}}
\def\8{\hbox to \arrayitem{\hfil$8$\hfil}}
\renewcommand\aa{a} 
\renewcommand\AA{A} 
\newcommand\ad{\mathrm{ad}}
\newcommand\bb{b} 
\newcommand\BB{B}
\newcommand\before{\Vdash}
\newcommand\Before[1]{\mathrel{\,\Vdash_{\!#1}\,}}
\newcommand\beforesym{\vDash}
\newcommand\cc{c}
\newcommand\CC{C}
\newcommand\Col{\mathrm{Col}}
\newcommand\comp{\mathbin{\scriptstyle\circ}}
\newcommand\const{c}
\newcommand\dd{d}
\newcommand\DD{D}
\newcommand\der{\partial}
\renewcommand\div{\mathrel{\vert}}
\newcommand\eps{\varepsilon}
\newcommand\ff{f}
\let\ge=\geqslant
\newcommand\GG{G}
\newcommand\HH{H}
\newcommand\Hom{\operatorname{Hom}}
\newcommand\ii{i}
\newcommand\inv{^{-1}}
\newcounter{ITEM}
\newcommand\ITEM[1]{\setcounter{ITEM}{#1}\leavevmode\hbox{\rm(\roman{ITEM})}}
\newcommand\kk{k}
\let\le=\leqslant
\newcommand\mm{m}
\newcommand\MOD[2]{#1\ \mathrm{mod}\, #2}
\newcommand\MODD[2]{#1\, [#2]}
\newcommand\nn{n}
\newcommand\notdiv{\mathrel{\not{}\hspace{-0.1ex}\vert\hspace{0.1ex}}}
\newcommand\op{\mathbin{\triangleright}}
\newcommand\OP[1]{\mathrel{\triangleright_{\hspace{-0.2ex}#1}}}
\newcommand\ops{\hspace{0.3ex}{\op}\hspace{0.3ex}}
\newcommand\pdots{\hspace{0.2ex}{\cdot}{\cdot}{\cdot}\hspace{0.2ex}}
\newcommand\per{\pi}
\newcommand\phih{\widehat{\VR(2,0)\smash\phi}}
\newcommand\phit{\widetilde{\VR(2,0)\smash\phi}}
\newcommand\pn{N}
\newcommand\pp{p}
\newcommand\ppn{2^\nn{-}1} 
\newcommand\proj{\mathrm{pr}}
\newcommand\qq{q}
\newcommand\Rack{{\scriptscriptstyle\mathrm{R}}}
\newcommand\rr{r}
\newcommand\RRRR{\mathbb{R}}
\renewcommand\SS{S}
\newcommand\thres{\vartheta}
\renewcommand\tt{t}
\def\VR(#1,#2){\vrule width0pt height#1mm depth#2mm}
\newcommand\uu{u}
\newcommand\vv{v}
\newcommand\wdots{, ...\hspace{0.2ex},}
\newcommand\xx{x}
\newcommand\yy{y}
\newcommand\zz{z}
\newcommand\ZZ{Z}
\newcommand\ZZZZ{\mathbb{Z}}
\begin{document}\rm

\title{Two- and three-cocycles for Laver tables}

\author{Patrick DEHORNOY}
\address{Laboratoire de Math\'ematiques Nicolas Oresme, UMR 6139 CNRS, Universit\'e de Caen BP 5186, 14032 Caen Cedex, France}
\address{Laboratoire Preuves, Programmes, Syst\`emes, UMR 7126 CNRS, Universit\'e Paris-Diderot Case 7014, 75205 Paris Cedex 13, France}
\email{dehornoy@math.unicaen.fr}
\urladdr{//www.math.unicaen.fr/\!\hbox{$\sim$}dehornoy}

\author{Victoria LEBED}
\address{OCAMI, Osaka City University, Japan}
\email{lebed.victoria@gmail.com}
\urladdr{//www.math.jussieu.fr/\!\hbox{$\sim$}lebed}

\keywords{Selfdistributivity, Laver tables, rack cohomology, quandle cocycle invariants, right-divisibility ordering}

\subjclass[2010]{57M27, 17D99, 20N02, 55N35, 06A99}

\thanks{The second author was supported by a JSPS Fellowship. She would like to thank OCAMI and in particular Seiichi Kamada for hospitality.}

\maketitle

\begin{abstract}
We determine all $2$- and $3$-cocycles for Laver tables, an infinite sequence of finite structures obeying the left-selfdistributivity law; in particular, we describe simple explicit bases. This provides a number of new positive braid invariants and paves the way for further potential topological applications. An important tool for constructing a combinatorially meaningful basis of $2$-cocycles is the right-divisibility relation on Laver tables, which turns out to be a partial ordering.
\end{abstract}

Introduced by Richard Laver in~\cite{Lvd}, the Laver tables are an infinite series of finite structures~$(\AA_\nn, \OP\nn)$ where $\AA_\nn$ is the set $\{1, 2 \wdots 2^\nn\}$ and $\OP\nn$ is a binary operation on~$\AA_\nn$ that obeys the left-selfdistributivity law
\begin{equation}
\label{E:LD}
\tag{$LD$}
\xx \op (\yy \op \zz) = (\xx \op \yy) \op (\xx \op \zz)
\end{equation}
and the initial condition $\xx \op 1 = \MOD{\xx + 1}{2^\nn}$.
According to an approach that can be traced back to Joyce~\cite{Joy}, Matveev~\cite{Mat}, and Brieskorn~\cite{Bri}, selfdistributivity is an algebraic distillation of the Reidemeister move of type~$\mathrm{III}$ (or, briefly, Reidemeister~$\mathrm{III}$ move)---see Figure~\ref{F:Colouring}---and, therefore, it is not surprising that structures involving operations that obey the law~\eqref{E:LD} can often lead to powerful topological constructions. 

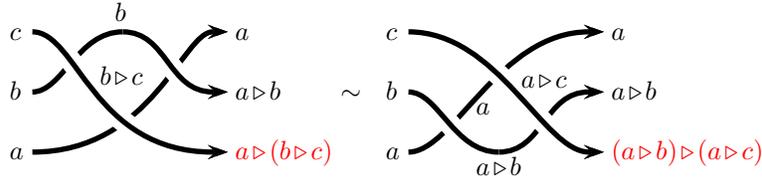
\begin{figure}[htb]
\begin{picture}(93,25)(0,0)
\psbezier[linewidth=2pt,border=3pt](0,8)(4,8)(6,16)(12,16)
\psbezier[linewidth=2pt,border=3pt](0,0)(16,0)(20,16)(24,16)
\psbezier[linewidth=2pt,border=3pt](12,16)(18,16)(18,8)(24,8)
\psbezier[linewidth=2pt,border=3pt](0,16)(6,16)(8,0)(24,0)
\psline[linewidth=2pt,border=3pt]{->}(24,0)(26,0)
\psline[linewidth=2pt,border=3pt]{->}(24,8)(26,8)
\psline[linewidth=2pt,border=3pt]{->}(24,16)(26,16)
\put(-3,-1){$\aa$}
\put(-3,7){$\bb$}
\put(-3,15){$\cc$}
\put(27,-1){\color{red} $\aa \ops (\bb \ops \cc)$}
\put(27,7){$\aa\ops\bb$}
\put(27,15){$\aa$}
\put(9,9){$\bb\ops\cc$}
\put(11,17.5){$\bb$}
\put(41,7){$\sim$}
\psbezier[linewidth=2pt,border=3pt](50,0)(56,0)(62,16)(74,16)
\psbezier[linewidth=2pt,border=3pt](62,0)(68,0)(68,8)(74,8)
\psbezier[linewidth=2pt,border=3pt](50,16)(62,16)(68,0)(74,0)
\psbezier[linewidth=2pt,border=3pt](50,8)(54,8)(56,0)(62,0)
\psline[linewidth=2pt,border=3pt]{->}(74,0)(76,0)
\psline[linewidth=2pt,border=3pt]{->}(74,8)(76,8)
\psline[linewidth=2pt,border=3pt]{->}(74,16)(76,16)
\put(47,-1){$\aa$}
\put(47,7){$\bb$}
\put(47,15){$\cc$}
\put(77,-1){\color{red} $(\aa \ops\bb) \ops (\aa \ops \cc)$}
\put(77,7){$\aa\ops\bb$}
\put(77,15){$\aa$}
\put(59,-3){$\aa\ops\bb$}
\put(59,5){$\aa$}
\put(65,8.5){$\aa\ops\cc$}
\end{picture}
\caption{\sf\small Translation of invariance under Reidemeister~$\mathrm{III}$ move into the language of selfdistributivity: when colours from a set~$\SS$ are put on the left ends of the strands and then propagated so that a $\bb$-coloured strand becomes $\aa \ops \bb$-coloured when it overcrosses an $\aa$-coloured arc, then the output colours are invariant under Reidemeister~$\mathrm{III}$ move if and only if the operation~$\ops$ obeys the left-selfdistributivity law.}
\label{F:Colouring}
\end{figure}

In practice, one of the most fruitful methods so far consists in developing a (co)homological appoach, as explained for instance in the (very accessible) surveys~\cite{Kam,CarterSurvey}. More specifically, according to schemes that will be recalled in Section~\ref{S:RackHom} below, every $2$- or $3$-cocycle for a selfdistributive structure~$\SS$ leads to a positive braid invariant.  Inserting Reidemeister II and I  moves into the picture leads to considering particular selfdistributive structures, namely racks, which are selfdistributive structures in which all left-translations are bijective~\cite{FeR}, and quandles, which are idempotent racks~\cite{Joy,Mat}. Racks or quandles can thus be used to construct not only positive braid invariants but also general braid and, respectively, knot and link invariants.

Laver tables are selfdistributive structures that are not racks, hence \textit{a fortiori} not quandles, but techniques similar to those developed by the first author in the case of free selfdistributive structures~\cite{Dgd} might make it possible to use them in topology and, due to their fundamental position among selfdistributive structures (see~\cite{Dgd,DraAlg, DraGro} for details), it is reasonable to expect promising developments. In such a context, the obvious first step in the direction of possible topological applications of Laver tables is to analyze the associated $2$- and $3$-cocycles. What we do in this paper is to provide an exhaustive description of all such cocycles:

\begin{thrmA}
\ITEM1 For every~$\nn \ge 0$, the $\ZZZZ$-valued $2$-cocycles for~$\AA_\nn$ make a free $\ZZZZ$-module of rank~$2^\nn$, with a basis consisting of the constant cocycle and of $2^\nn -1$ explicit $\{0,1\}$-valued coboundaries defined for $1 \le \qq < 2^\nn$ by
$$\psi_{\qq, \nn}(\xx, \yy) = \begin{cases}
\ 1&\mbox{if $\qq$ can be written as $\pp \OP\nn \yy$ for some~$\pp$, but not as $\rr \OP\nn(\xx \OP\nn \yy)$},\\
\ 0&\mbox{otherwise}.\end{cases}$$

\ITEM2 For every~$\nn \ge 0$, the $\ZZZZ$-valued $3$-cocycles for~$\AA_\nn$ make a free $\ZZZZ$-module of rank~$2^{2\nn}-2^\nn+1$, with a basis consisting of the constant cocycle and of $2^{2\nn}-2^\nn$ explicit $\{0,\pm 1\}$-valued coboundaries indexed by pairs~$(\pp, \qq)$ with $1 \le \pp, \qq \le 2^\nn$ and $\pp \neq 2^\nn-1$.
\end{thrmA}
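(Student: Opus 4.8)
The plan is to work with the shelf cochain complex $\CC^\kk = \{f\colon \AA_\nn^\kk \to \ZZZZ\}$, writing $\op$ for $\OP\nn$, whose differential in low degrees reads $\delta\lambda(\xx,\yy) = \lambda(\yy) - \lambda(\xx\op\yy)$ on $1$-cochains and whose $2$-cocycle condition is $\phi(\xx\op\yy,\xx\op\zz) + \phi(\xx,\zz) = \phi(\xx,\yy\op\zz) + \phi(\yy,\zz)$, and writing $\aa\before\bb$ for ``$\bb = \pp\op\aa$ for some~$\pp$''. I would organise both parts around the identity $\dim\BB^\kk = \dim\CC^{\kk-1} - \dim\ZZ^{\kk-1}$ together with a rank-one computation of the relevant cohomology. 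A $1$-cocycle $\lambda$ satisfies $\lambda(\yy) = \lambda(\xx\op\yy)$; taking $\yy=1$ and using $\xx\op 1 = \xx+1$ forces $\lambda$ constant, so $\dim\ZZ^1 = 1$ and $\dim\BB^2 = 2^\nn-1$. Granting part~\ITEM1, which gives $\dim\ZZ^2 = 2^\nn$, the same identity yields $\dim\BB^3 = 2^{2\nn} - 2^\nn$. Thus in each degree the number of coboundaries proposed in the basis already matches $\dim\BB^\kk$, and it remains to (a)~identify these coboundaries and prove their independence, and (b)~show the cocycle space exceeds the coboundaries by exactly one dimension.

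For~(a) in degree~$2$, set $\lambda_\qq(\aa) = 1$ if $\aa\before\qq$ and $0$ otherwise. Then $\delta\lambda_\qq(\xx,\yy) = [\yy\before\qq] - [(\xx\op\yy)\before\qq]$; since $\yy\before\xx\op\yy$ always holds and $\before$ is a partial order, hence transitive (by the result established earlier), $(\xx\op\yy)\before\qq$ implies $\yy\before\qq$, the $-1$ values disappear, and $\delta\lambda_\qq = \psi_{\qq,\nn}$, so each $\psi_{\qq,\nn}$ is a coboundary. For independence I would note that the matrix $[\aa\before\qq]_{\aa,\qq}$ is the zeta matrix of the poset $(\AA_\nn,\before)$, unitriangular along any linear extension and hence invertible over~$\ZZZZ$, so $\{\lambda_\qq : \qq\in\AA_\nn\}$ is a basis of $\CC^1$; dropping $\qq = 2^\nn$ and checking that the constant function is not in the span of the remaining $\lambda_\qq$ shows the $2^\nn-1$ coboundaries $\psi_{\qq,\nn}$ are independent, hence a basis of $\BB^2$. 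The degree-$3$ coboundaries would be built from the same ingredient, namely indicator $2$-cochains attached to the divisibility order, now indexed by pairs $(\pp,\qq)$, and the exclusion $\pp\neq 2^\nn-1$ is expected to play the role of the dropped index, removing the single dependency.

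Part~(b) is the crux: I must show $\dim\HH^2 = \dim\HH^3 = 1$ with generator the constant cocycle. That the constant is a genuine non-coboundary is easy, since the substitution $\xx = 2^\nn$ together with $2^\nn\op\aa = \aa$ makes the two halves of each differential cancel, so \emph{every} coboundary vanishes on the slice $\{2^\nn\}\times\AA_\nn^{\kk-1}$, whereas a nonzero constant does not. The hard direction is the upper bound $\dim\ZZ^2\le 2^\nn$ and its degree-$3$ analogue, i.e. the absence of cocycles beyond coboundaries and constants. My plan is to specialise the cocycle identity at the two distinguished elements, the left unit $2^\nn$ and the generator~$1$: the substitution $\zz=1$ with $\aa\op1 = \aa+1$ propagates the values of $\phi$ along the shift, while $\xx=2^\nn$ relates the values $\phi(2^\nn,-)$ across the divisibility order. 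Iterating these should reduce an arbitrary cocycle, modulo $\BB^\kk$, to a single free parameter. Proving that this reduction is \emph{consistent}, that the recursions impose no hidden relations beyond those already counted, is precisely where left-selfdistributivity and the partial-order structure of $\before$ must be used in full, and I expect this well-definedness to be the main technical obstacle; the degree-$3$ bound should follow the same scheme with a correspondingly heavier bookkeeping of boundary data. Combining the pieces, $\ZZ^2 = \BB^2\oplus\ZZZZ\langle\text{const}\rangle$ and $\ZZ^3 = \BB^3\oplus\ZZZZ\langle\text{const}\rangle$ have the stated ranks and bases.
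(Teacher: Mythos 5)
Your degree-$2$ coboundary analysis is essentially correct and close in spirit to the paper's: both hinge on the partial order $\Before\nn$ and a unitriangular (zeta-matrix) change of basis, and your rank identity $\operatorname{rank}\BB^\kk=\operatorname{rank}\CC^{\kk-1}(\AA_\nn)-\operatorname{rank}\ZZ^{\kk-1}(\AA_\nn)$ correctly yields $\operatorname{rank}\BB^2(\AA_\nn)=2^\nn-1$ and, granting part~\ITEM1, $\operatorname{rank}\BB^3(\AA_\nn)=2^{2\nn}-2^\nn$. But part~(b), which you yourself call the crux, is not proved, and it is where all the difficulty of Theorem~A sits. Your plan---substitute $\xx=2^\nn$ and $\zz=1$ into \eqref{E:Cocycle} and iterate---amounts to reconstructing a cocycle from its first column by a downward induction through the whole table; the paper only states this first-column determination as an unproved closing remark of Section~\ref{S:TwoPsi}, precisely because its consistency is delicate, and you explicitly leave that consistency open. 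The device you are missing is the element $2^\nn-1$: by \eqref{E:LastRows} its left translation is \emph{constant}, $(2^\nn-1)\OP\nn\qq=2^\nn$ for every $\qq$, so substituting it for the first variable collapses \eqref{E:Cocycle} to $\phi(2^\nn,2^\nn)+\phi(2^\nn{-}1,\zz)=\phi(2^\nn{-}1,\yy\op\zz)+\phi(\yy,\zz)$ for all $\yy,\zz$. Hence a $2$-cocycle is determined by its row at $2^\nn-1$ (Lemmas~\ref{L:LastColumn2} and~\ref{L:PenultRow2}), which gives $\operatorname{rank}\ZZ^2(\AA_\nn)\le 2^\nn$ with no induction at all; spanning follows because the $\phi_{\qq,\nn}$ together with a constant realize every possible penultimate row. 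The degree-$3$ proof runs on the same fuel (Lemmas~\ref{L:LastRow3}, \ref{L:LastRow3'} and~\ref{L:FirstColumn3}), not on a ``heavier bookkeeping'' version of your recursion.

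Two further concrete points in degree~$3$ fail as written. First, your argument that the constant is not a coboundary: it is false that every $3$-coboundary vanishes on the slice $\{2^\nn\}\times\AA_\nn^2$; by \eqref{E:Der3} one gets $\der^3\theta\,(2^\nn,\yy,\zz)=\theta(2^\nn,\zz)-\theta(2^\nn,\yy\op\zz)$, which is nonzero for instance when $\theta(\xx,\yy)=\delta_{\xx,2^\nn}\delta_{\yy,1}$. The fix is to evaluate at a single well-chosen point, e.g.\ the triple $(2^\nn{-}1,2^\nn,2^\nn)$, where every $3$-coboundary does vanish while the constant does not, as in the paper's proof of Proposition~\ref{P:Basis3}. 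Second, your guess for the degree-$3$ basis is off target: the paper's coboundaries are $\phi_{\pp,\qq,\nn}=\der^3\theta$ with $\theta(\xx,\yy)=-\delta_{\pp,\xx}\delta_{\qq,\yy}$ (product deltas, not order-indicator cochains), and the exclusion $\pp\neq 2^\nn-1$ removes $2^\nn$ indices, not ``the single dependency''; it reflects Lemma~\ref{L:LastRow3'}, namely that a $3$-cocycle takes one common value on all triples $(2^\nn{-}1,2^\nn{-}1,\zz)$, so that entire slice is accounted for by the constant cocycle alone. In sum, your proposal establishes the coboundary half of part~\ITEM1 by a legitimate (and slightly streamlined) variant of the paper's argument, but neither upper bound, so the theorem itself remains unproved.
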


Let us mention that the above results still hold without change (and at no extra cost) when $\ZZZZ$ is replaced with any abelian group~$\GG$. 

 Theorem~A shows that the families of $2$- and $3$-cocycles on the Laver tables are quite rich,  being essentially indexed by the elements of~$\AA_\nn$ and the pairs of elements of~$\AA_\nn$, respectively. Moreover, we shall see that these cocycles capture a number of deep phenomena connected with Laver tables, which shows that they are (highly) nontrivial. For instance,  periods and thresholds, two series of parameters that  witness  the combinatorial complexity of the tables, can be recovered from $2$-cocycles.  Similarly, the cocycles~$\psi_{\qq, \nn}$ of Theorem~A inherit the order properties provided by the right-divisibility relation of~$\AA_\nn$, a certain partial ordering whose properties remain at the moment largely unknown. Alltogether, these elements may appear as  a favourable sign for potential applications. In particular,  having an explicit basis of $2$-cocycles made of functions  with nonnegative values  seems especially promising in view of combinatorial interpretations, typically for counting arguments.  Also, independently of any further development, it should be remembered that, according to the principles recalled in Lemmas~\ref{L:Use2Cocycle} and~\ref{L:Use3Cocycle} below, every explicit cocycle we describe directly gives rise to a new positive braid invariant.

The paper is organized as follows.  Section~\ref{S:Laver} contains a short, self-contained introduction to the Laver tables. Next, we recall in Section~\ref{S:RackHom} the basic notions of rack homology as well as the principle for using $2$- and $3$-cocycles to construct topological invariants. In Section~\ref{S:TwoPhi}, we construct a first basis for the $2$-cocycles for~$\AA_\nn$. In Section~\ref{S:Poset}, we investigate the right-divisibility relation of Laver tables, a digression of independent interest, which is then used in Section~\ref{S:TwoPsi} to construct a second basis for the $2$-cocycles for~$\AA_\nn$, thus completing the proof of Point~\ITEM1 in Theorem~A. Finally, in Section~\ref{S:Three}, we similarly analyze $3$-cocycles and establish Point~\ITEM2 in Theorem~A.

\section{Laver tables: construction and properties}
\label{S:Laver}

Laver tables are the elements of an infinite family of selfdistributive structures discovered by Richard Laver around 1995 as a by-product of his analysis of iterations of elementary embeddings in Set Theory~\cite{Lvd}. Their existence and characterization are specified in the following result. Here and everywhere in the sequel, we write $\MOD\pp\mm$ for the unique integer in $\{1 \wdots \mm\}$ that is equal to~$\pp$ \textit{modulo}~$\mm$. 

\begin{thrm}[Laver, \cite{Dgd}] 
\label{T:Laver} 
\ITEM1 For every $\nn \ge 0$, there exists a unique binary operation~$\op$ on the set $\{1 \wdots 2^\nn\}$ obeying the laws
\begin{gather}
\xx \op 1 = \MOD{\xx + 1}{2^\nn},
\label{E:Laver1}\\
\xx \op (\yy \op 1) = (\xx \op \yy) \op (\xx \op 1);
\label{E:Laver2}
\end{gather}
the operation~$\op$ obeys the left-selfdistributivity law~$(LD)$. 

\ITEM2 For every~$\pp \le 2^\nn$, there exists a (unique) integer~$2^\rr$ satisfying 
\begin{equation*}
\pp \op 1 < \pp \op 2 < \pdots < \pp \op 2^\rr = 2^\nn,
\end{equation*}
and the subsequent values $\pp \op \qq$ then repeat periodically. 
\end{thrm}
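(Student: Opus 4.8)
The plan is to prove both parts by induction on the size parameter, using the two defining laws~\eqref{E:Laver1} and~\eqref{E:Laver2} to pin down the entire multiplication table row by row. For Point~\ITEM1, I would first establish uniqueness, then existence.

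For uniqueness, fix~$\nn$ and suppose an operation~$\op$ satisfying \eqref{E:Laver1}--\eqref{E:Laver2} exists. The key observation is that \eqref{E:Laver2} lets one compute the row $\pp \op \yy$ by induction on~$\yy$: indeed, using $\yy \op 1 = \MOD{\yy+1}{2^\nn}$, law~\eqref{E:Laver2} rewrites as $\pp \op \MOD{\yy+1}{2^\nn} = (\pp \op \yy) \op (\pp \op 1) = (\pp \op \yy) \op \MOD{\pp+1}{2^\nn}$. Thus each value $\pp \op (\yy+1)$ is determined by the value $\pp \op \yy$ together with the entry in the row indexed by $\pp \op \yy$. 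So if we process the rows $\pp = 2^\nn, 2^\nn - 1 \wdots 1$ in \emph{decreasing} order of~$\pp$ (equivalently, in increasing order of the successor $\MOD{\pp+1}{2^\nn}$, which for the top row $\pp = 2^\nn$ gives $\pp \op 1 = 1$), every recurrence refers only to already-determined rows, so the whole table is forced. This simultaneously gives the algorithm that proves existence: one \emph{defines} $\op$ by this recurrence and must then check that the resulting table actually satisfies the full law~\eqref{E:LD}, not merely its restricted instance~\eqref{E:Laver2}.

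The step I expect to be the main obstacle is precisely this verification that \eqref{E:LD} holds in full generality for the operation produced by the recurrence. The recurrence only encodes the special case $\zz = 1$ of \eqref{E:LD}; upgrading to arbitrary~$\zz$ is the substantive content. I would argue by induction on~$\zz$: assuming $\xx \op (\yy \op \zz) = (\xx \op \yy) \op (\xx \op \zz)$ for a given~$\zz$, apply~$\xx \op (-)$ to the identity $\yy \op (\zz \op 1) = (\yy \op \zz) \op (\yy \op 1)$ and manipulate using the inductive hypothesis together with the $\zz = 1$ case, exactly as in the standard verification that the ``$b=1$'' fragment of selfdistributivity propagates to all~$b$. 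This is the genuinely delicate bookkeeping, and it relies on the fact that the operation is well-defined on the whole of $\{1 \wdots 2^\nn\}$, i.e. that the recurrence never leaves the set---which is guaranteed since every value is computed as an entry of the table.

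For Point~\ITEM2, I would fix~$\pp$ and study the sequence $\pp \op 1, \pp \op 2, \ldots$. Using the same recurrence $\pp \op (\yy+1) = (\pp \op \yy) \op \MOD{\pp+1}{2^\nn}$, one shows the sequence is strictly increasing up to the first index at which it reaches the maximal value~$2^\nn$: the point is that multiplying on the right produces a strictly larger value until the top is hit, because $\xx \op 1 = \MOD{\xx+1}{2^\nn}$ jumps each element to its successor and the selfdistributive structure forces monotone growth within a row before saturation. Once $\pp \op \qq_0 = 2^\nn$ is attained, note that $2^\nn \op \zz = \zz$ for all~$\zz$ (the top element acts as a left identity, which follows from $2^\nn \op 1 = 1$ and \eqref{E:LD}), so $\pp \op (\qq_0 + \zz) = \pp \op (2^\nn \op \zz \cdots)$ reproduces the earlier values and the row becomes periodic with period~$\qq_0$. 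That the first saturation index~$\qq_0$ is a power of~$2$ is the characteristic arithmetic feature of Laver tables; I would deduce it from the periodicity structure by showing the period must divide~$2^\nn$ and arguing, via the recurrence together with the $2$-adic behaviour of the successor map, that no odd factor can survive---this last divisibility point being the most table-specific and least routine part of~\ITEM2.
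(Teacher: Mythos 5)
First, a framing remark: the paper does not prove Theorem~\ref{T:Laver} at all --- it is imported from Laver with proof in~\cite{Dgd}, and the paper only sketches, after the statement, the row-by-row computation scheme. Your uniqueness argument matches that scheme and is essentially sound, modulo one invariant you leave implicit: for $\pp < 2^\nn$ the recurrence $\pp \op (\yy \op 1) = (\pp \op \yy) \op (\pp \op 1)$ consults an entry of the row indexed by $\pp \op \yy$, so the bottom-up computation is well-founded only if one proves, simultaneously with the construction, that $\pp \op \yy > \pp$ holds for $\pp < 2^\nn$ (the row of~$2^\nn$ being special in that it consults only the prescribed first column). That omission is patchable. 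The genuine gap is in your upgrade from \eqref{E:Laver2} to the full law~\eqref{E:LD}: a single induction on~$\zz$ does not close. Writing the step out, \eqref{E:Laver2} gives
\begin{equation*}
\xx \op (\yy \op (\zz \op 1)) = \xx \op \bigl((\yy \op \zz) \op (\yy \op 1)\bigr),
\end{equation*}
and to proceed you must distribute~$\xx$ over this product, i.e.\ you need the instance of~\eqref{E:LD} with arguments $(\xx,\, \yy \op \zz,\, \yy \op 1)$, whose third entry is $\yy \op 1$ --- covered neither by a hypothesis about third entries $\le \zz$ nor by the case $\zz = 1$. Even granting it, you land on $\bigl((\xx \op \yy) \op (\xx \op \zz)\bigr) \op \bigl((\xx \op \yy) \op (\xx \op 1)\bigr)$, and matching this with $(\xx \op \yy) \op \bigl((\xx \op \zz) \op (\xx \op 1)\bigr)$ requires a second uncovered instance, this time with first argument $\xx \op \yy$. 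So the induction must be nested: downward on the first argument (legitimate because $\xx \op \yy > \xx$), downward on the second (because $\yy \op \zz > \yy$), upward on the third; and the base case of the second-argument induction needs $\pp \op 2^\nn = 2^\nn$, which is part of the periodicity statement~\ITEM2. There is no ``standard verification'' that the $\zz=1$ fragment of selfdistributivity propagates to all~$\zz$; that propagation, intertwined with~\ITEM2, is precisely the hard content of Laver's theorem.

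Your treatment of~\ITEM2 has the same assertion-instead-of-proof flavour. The strict increase of a row up to its first hit of~$2^\nn$ is claimed (``the selfdistributive structure forces monotone growth''), not derived, yet it is what makes the first saturation index equal the minimal period. More seriously, the power-of-two property does not follow from your remark that the period divides~$2^\nn$: that divisibility is itself the point at issue, since the sequence $\qq \mapsto \pp \op \qq$ generated by the recurrence is only known to be periodic with period the first saturation index, and concluding that this index divides~$2^\nn$ presupposes $\pp \op 2^\nn = 2^\nn$, which is equivalent to what you are trying to prove. In~\cite{Dgd} this is obtained by an induction on~$\nn$ through the doubling relation between $\AA_\nn$ and~$\AA_{\nn-1}$ (the structure recalled in Proposition~\ref{P:Proj} of the paper), where the period of~$\pp$ in~$\AA_\nn$ is shown to equal either its period in~$\AA_{\nn-1}$ or twice that; your appeal to ``the $2$-adic behaviour of the successor map'' does not substitute for that argument.
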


\begin{defi}
For $\nn \ge 0$, the structure with domain $\{1 \wdots 2^\nn\}$ specified in Theorem~\ref{T:Laver} is called the \emph{$\nn$th Laver table} and it is denoted by~$\AA_\nn$. The number~$2^\rr$ in~\ITEM2 is called the \emph{period} of~$\pp$ in~$\AA_\nn$ and it is denoted by~$\per_\nn(\pp)$.
\end{defi}

To avoid ambiguity, especially when several Laver tables are considered simultaneously, we shall denote by~$\OP\nn$  the operation of~$\AA_\nn$. 

Explicitly computing Laver tables is easy: identifying the structure with a table where the $(\xx,\yy)$-entry contains $\xx \op \yy$, and starting from an empty table, \eqref{E:Laver1} prescribes the values of the products in the first column and then \eqref{E:Laver2} enables one to inductively complete the last row from left to right, then the penultimate row, \textit{etc.}, finishing with the first, which is computed last. Theorem~\ref{T:Laver}\ITEM2 says that every row in a Laver table is periodic, with a period that is a power of~$2$.
The first tables are displayed in Table~\ref{T:First}.

\begin{table}[htb]
\begin{gather*}
\begin{tabular}{c|c}
$\AA_0$&$1$\\
\hline
$1$&$1$
\end{tabular}
\quad 
\begin{tabular}{c|cc}
$\AA_1$&$1$&$2$\\
\hline
$1$&$2$&$2$\\
$2$&$1$&$2$\\
\end{tabular}
\quad 
\begin{tabular}{c|cccc}
$\AA_2$&$1$&$2$&$3$&$4$\\
\hline
$1$&$2$&$4$&$2$&$4$\\
$2$&$3$&$4$&$3$&$4$\\
$3$&$4$&$4$&$4$&$4$\\
$4$&$1$&$2$&$3$&$4$\\
\end{tabular}
\quad 
\begin{tabular}{c|cccccccc}
$\AA_3$&$1$&$2$&$3$&$4$&$5$&$6$&$7$&$8$\\
\hline
$1$&$2$&$4$&$6$&$8$&$2$&$4$&$6$&$8$\\
$2$&$3$&$4$&$7$&$8$&$3$&$4$&$7$&$8$\\
$3$&$4$&$8$&$4$&$8$&$4$&$8$&$4$&$8$\\
$4$&$5$&$6$&$7$&$8$&$5$&$6$&$7$&$8$\\
$5$&$6$&$8$&$6$&$8$&$6$&$8$&$6$&$8$\\
$6$&$7$&$8$&$7$&$8$&$7$&$8$&$7$&$8$\\
$7$&$8$&$8$&$8$&$8$&$8$&$8$&$8$&$8$\\
$8$&$1$&$2$&$3$&$4$&$5$&$6$&$7$&$8$\\
\end{tabular}\\
\small\def\arrayitem{5mm}\begin{tabular}{c|cccccccccccccccc}
\VR(0,1.5)$\AA_4$&\SM1\SM2\SM3\SM4\SM5\SM6\SM7\SM8\SM9\SM{10}\SM{11}\SM{12}\SM{13}\SM{14}\SM{15}\SM{16}\\
\hline
\VR(3,0)\SM1&\SM2\SM{12}\SM{14}\SM{16}\SM2\SM{12}\SM{14}\SM{16}\SM2\SM{12}\SM{14}\SM{16}\SM2\SM{12}\SM{14}\SM{16}\\
\SM2&\SM3\SM{12}\SM{15}\SM{16}\SM3\SM{12}\SM{15}\SM{16}\SM3\SM{12}\SM{15}\SM{16}\SM3\SM{12}\SM{15}\SM{16}\\
\SM3&\SM4\SM8\SM{12}\SM{16}\SM4\SM8\SM{12}\SM{16}\SM4\SM8\SM{12}\SM{16}\SM4\SM8\SM{12}\SM{16}\\
\SM4&\SM5\SM6\SM7\SM8\SM{13}\SM{14}\SM{15}\SM{16}\SM5\SM6\SM7\SM8\SM{13}\SM{14}\SM{15}\SM{16}\\
\SM5&\SM6\SM8\SM{14}\SM{16}\SM6\SM8\SM{14}\SM{16}\SM6\SM8\SM{14}\SM{16}\SM6\SM8\SM{14}\SM{16}\\
\SM6&\SM7\SM8\SM{15}\SM{16}\SM7\SM8\SM{15}\SM{16}\SM7\SM8\SM{15}\SM{16}\SM7\SM8\SM{15}\SM{16}\\
\SM7&\SM8\SM{16}\SM8\SM{16}\SM8\SM{16}\SM8\SM{16}\SM8\SM{16}\SM8\SM{16}\SM8\SM{16}\SM8\SM{16}\\
\SM8&\SM9\SM{10}\SM{11}\SM{12}\SM{13}\SM{14}\SM{15}\SM{16}\SM9\SM{10}\SM{11}\SM{12}\SM{13}\SM{14}\SM{15}\SM{16}\\
\SM9&\SM{10}\SM{12}\SM{14}\SM{16}\SM{10}\SM{12}\SM{14}\SM{16}\SM{10}\SM{12}\SM{14}\SM{16}\SM{10}\SM{12}\SM{14}\SM{16}\\
\SM{10}&\SM{11}\SM{12}\SM{15}\SM{16}\SM{11}\SM{12}\SM{15}\SM{16}\SM{11}\SM{12}\SM{15}\SM{16}\SM{11}\SM{12}\SM{15}\SM{16}\\
\SM{11}&\SM{12}\SM{16}\SM{12}\SM{16}\SM{12}\SM{16}\SM{12}\SM{16}\SM{12}\SM{16}\SM{12}\SM{16}\SM{12}\SM{16}\SM{12}\SM{16}\\
\SM{12}&\SM{13}\SM{14}\SM{15}\SM{16}\SM{13}\SM{14}\SM{15}\SM{16}\SM{13}\SM{14}\SM{15}\SM{16}\SM{13}\SM{14}\SM{15}\SM{16}\\
\SM{13}&\SM{14}\SM{16}\SM{14}\SM{16}\SM{14}\SM{16}\SM{14}\SM{16}\SM{14}\SM{16}\SM{14}\SM{16}\SM{14}\SM{16}\SM{14}\SM{16}\\
\SM{14}&\SM{15}\SM{16}\SM{15}\SM{16}\SM{15}\SM{16}\SM{15}\SM{16}\SM{15}\SM{16}\SM{15}\SM{16}\SM{15}\SM{16}\SM{15}\SM{16}\\
\SM{15}&\SM{16}\SM{16}\SM{16}\SM{16}\SM{16}\SM{16}\SM{16}\SM{16}\SM{16}\SM{16}\SM{16}\SM{16}\SM{16}\SM{16}\SM{16}\SM{16}\\
\SM{16}&\SM1\SM2\SM3\SM4\SM5\SM6\SM7\SM8\SM9\SM{10}\SM{11}\SM{12}\SM{13}\SM{14}\SM{15}\SM{16}
\end{tabular}
\end{gather*}
\caption{\sf\small The first five Laver tables; observe the periodic behaviour of the rows as predicted by Theorem~\ref{T:Laver}\ITEM2: for instance, we read the values $\per_0(1) = \per_1(1) = 1$, $\per_2(1) = 2$, $\per_3(1) = \per_4(1) = 4$.}
\label{T:First}
\end{table}

For further reference, let us note some consequences of the facts mentioned in Theorem~\ref{T:Laver}. First, as the rows of~$\AA_\nn$ are of length~$2^\nn$ and every period~$\per_\nn(\pp)$ is a power of~$2$, hence a divisor of~$2^\nn$, we deduce for every~$\pp$ in~$\{1 \wdots 2^\nn\}$ the equality
\begin{equation}
\label{E:LastColumn}
\pp \OP\nn 2^\nn = 2^\nn.
\end{equation}
We also note that $\pp < 2^\nn$ implies for every~$\qq$
\begin{equation}
\label{E:LeftMultiples}
\pp < \pp  \OP\nn \qq,  
\end{equation}
and that 
\begin{equation}
\label{E:PlusOne}
\pp \OP\nn \qq = \pp \OP\nn \qq' \mbox{\quad implies \quad} \pp \OP\nn (\MOD{\qq+1}{2^\nn}) = \pp \OP\nn (\MOD{\qq'+1}{2^\nn}):
\end{equation}
by~\eqref{E:Laver1}, the latter equality is $\pp \OP\nn (\qq \OP\nn 1) = \pp \OP\nn (\qq' \OP\nn 1)$, hence equivalently $(\pp \OP\nn \qq) \OP\nn (\pp \OP\nn 1) = (\pp \OP\nn \qq') \OP\nn (\pp \OP\nn 1)$ owing to~\eqref{E:Laver2}.

On the other hand, a direct verification gives, for every~$\qq$, the values
\begin{equation}
\label{E:LastRows}
(2^\nn{-}1) \OP\nn \qq = 2^\nn \mbox{\quad and \quad} 2^\nn \OP\nn \qq = \qq:
\end{equation}
 in other words, we have  $\per_\nn(2^\nn{-}1) = 1$ and $\per_\nn(2^\nn) = 2^\nn$.

By construction, the Laver table~$\AA_\nn$ is generated by the element~$1$, and it is therefore an example of a monogenerated left-selfdistributive structure. It turns out that every monogenerated left-selfdistributive structure can be obtained from Laver tables using some simple operations~\cite{DraAlg, DraGro}. Note that such structures are very far from the most common selfdistributive structures, such as groups equipped with the conjugacy operation $\xx \op \yy = \xx\yy\xx\inv$ or, more generally, $\xx \op \yy = \xx \ff(\yy\xx\inv)$ with $\ff$ an endomorphism of the considered group. Contrary to the latter examples, Laver tables (except~$\AA_0$) are not racks, since for most values of~$\pp$ the map $\qq \mapsto \pp \OP\nn \qq$ is not bijective. Moreover, they do not obey the law $(\xx \op \xx) \op \yy = \xx \op \yy$, which is obeyed in every rack. 

Laver tables are strongly connected to one another. Indeed, there exists a natural projection from~$\AA_\nn$ to~$\AA_{\nn-1}$ and, in the other direction, constructing the rows of~$\pp$ and $p + 2^{\nn-1}$ in~$\AA_\nn$ from the row of~$\pp$ in~$\AA_{\nn-1}$ requires determining one single integer parameter.

\begin{prop}[Laver, \cite{Dgd}]
\label{P:Proj}
\ITEM1 For every~$\nn$, the map~$\proj_\nn : \pp \mapsto \MOD\pp{2^{\nn-1}}$ defines a surjective homomorphism from~$\AA_\nn$ to~$\AA_{\nn-1}$. 

\ITEM2 For all~$\nn$ and $\pp \le 2^{\nn-1}$, there exists a number $\rr$ with $0 \le \rr \le \per_{\nn-1}(\pp)$, such that, for every~$\qq \le \per_{\nn}(\pp)$, one has
\begin{equation}
\label{E:Proj}
\pp \OP\nn \qq = \begin{cases}
\pp \OP{\nn-1} \qq &\mbox{for $1 \le \qq \le \rr$},\\
\pp \OP{\nn-1} \qq + 2^{\nn-1} &\mbox{for $\rr < \qq \le \per_{\nn-1}(\pp)$}.
\end{cases}
\end{equation}
\end{prop}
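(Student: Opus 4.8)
The plan is to derive Point \ITEM1 from the uniqueness half of Theorem~\ref{T:Laver}, and then to obtain Point \ITEM2 almost directly from \ITEM1 together with the strict monotonicity of Theorem~\ref{T:Laver}\ITEM2. Throughout I write $a \sim b$ for $a \equiv b \pmod{2^{\nn-1}}$, so that $\proj_\nn$ is exactly the projection onto the $\sim$-classes and \ITEM1 amounts to the two claims that $\sim$ is a congruence of $(\AA_\nn, \OP\nn)$ and that the quotient is $\AA_{\nn-1}$. Once $\sim$ is shown to be a congruence, the induced operation lives on a $2^{\nn-1}$-element set and inherits from \eqref{E:Laver1} and \eqref{E:Laver2} the corresponding laws at level $\nn-1$, so the uniqueness in Theorem~\ref{T:Laver}\ITEM1 identifies it with~$\OP{\nn-1}$ and makes $\proj_\nn$ a surjective homomorphism.

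I would split the congruence property into right-compatibility ($\yy \sim \yy'$ implies $\pp \OP\nn \yy \sim \pp \OP\nn \yy'$) and left-compatibility ($\pp \sim \pp'$ implies $\pp \OP\nn \qq \sim \pp' \OP\nn \qq$). Right-compatibility is the easy half: by Theorem~\ref{T:Laver}\ITEM2 the values $\pp \OP\nn 1 < \cdots < \pp \OP\nn \per_\nn(\pp) = 2^\nn$ of one period are distinct, and by~\eqref{E:LeftMultiples} they all lie in $\{\pp+1 \wdots 2^\nn\}$ when $\pp < 2^\nn$; hence $\per_\nn(\pp) \le 2^\nn - \pp < 2^\nn$, and being a power of~$2$ it divides $2^{\nn-1}$, while for $\pp = 2^\nn$ the row is the identity by~\eqref{E:LastRows}. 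In every case the reduced row $\qq \mapsto \MOD{\pp \OP\nn \qq}{2^{\nn-1}}$ has period dividing $2^{\nn-1}$, which is exactly right-compatibility.

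Left-compatibility is the heart of the matter, and the step I expect to cause the most trouble. Since each $\sim$-class has exactly the two representatives $a$ and $a + 2^{\nn-1}$ with $a \le 2^{\nn-1}$, I would prove, by downward induction on the row index~$\pp$ (the order in which the rows are actually computed), the statement $E(\pp)$: $\pp \OP\nn \qq \sim \proj_\nn(\pp) \OP\nn \qq$ for all~$\qq$. It is vacuous for $\pp \le 2^{\nn-1}$. For $2^{\nn-1} < \pp < 2^\nn$ one runs an inner induction on~$\qq$ using the recursion $\pp \OP\nn (\qq+1) = (\pp \OP\nn \qq) \OP\nn (\pp \OP\nn 1)$ coming from~\eqref{E:Laver1} and~\eqref{E:Laver2}: with $u = \pp \OP\nn \qq$ and $u' = \proj_\nn(\pp) \OP\nn \qq$, the inner hypothesis gives $u \sim u'$, right-compatibility lets one use a common second argument, and the point is to compare $u \OP\nn w$ with $u' \OP\nn w$. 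Here I would invoke the key observation that two elements of the upper half $\{2^{\nn-1}+1 \wdots 2^\nn\}$ that are $\sim$-equivalent must coincide, that set being a complete residue system mod~$2^{\nn-1}$. Since $u = \pp \OP\nn \qq > \pp > 2^{\nn-1}$ by~\eqref{E:LeftMultiples}, either $u'$ lies in the lower half, where $\proj_\nn(u') = u'$ and the downward hypothesis $E(u)$ finishes the step, or $u'$ lies in the upper half, forcing $u' = u$. The genuinely delicate base fact, which is what keeps the induction non-circular and which I regard as the main obstacle, is that the row of~$2^{\nn-1}$ reduces to the identity, $2^{\nn-1} \OP\nn \qq \sim \qq$ (equivalently $\per_\nn(2^{\nn-1}) = 2^{\nn-1}$ and the row is $2^{\nn-1}+1, \ldots, 2^\nn$ repeated); this is $E(2^\nn)$, and it feeds the induction exactly when the value $u = 2^\nn$ occurs. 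If the existence in Theorem~\ref{T:Laver} is established by the customary construction of $\AA_\nn$ from $\AA_{\nn-1}$, this base fact and indeed all of \ITEM2 come essentially for free; otherwise it has to be proven directly, most naturally by folding it into the same induction.

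Finally, for \ITEM2 I would fix $\pp \le 2^{\nn-1}$ and use \ITEM1 in the form $\proj_\nn(\pp \OP\nn \qq) = \pp \OP{\nn-1} \qq$, so that each $\pp \OP\nn \qq$ equals one of its two lifts $\pp \OP{\nn-1} \qq$ or $\pp \OP{\nn-1} \qq + 2^{\nn-1}$. The strict increase $\pp \OP\nn 1 < \cdots < \pp \OP\nn \per_\nn(\pp) = 2^\nn$ of Theorem~\ref{T:Laver}\ITEM2 then forces a single threshold: a lower lift is at most $2^{\nn-1}$, so once a value exceeds $2^{\nn-1}$ all subsequent ones must stay in the upper half, giving a unique $\rr$ with $0 \le \rr \le \per_{\nn-1}(\pp)$ below which the lower lift is taken and above which the upper lift is taken, which is assertion~\eqref{E:Proj}. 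Comparing the end of the period with~\eqref{E:LastColumn} then shows that $\per_\nn(\pp)$ is $\per_{\nn-1}(\pp)$ or its double according to whether the switch occurs strictly before $\per_{\nn-1}(\pp)$ or exactly at it, which also accounts for the columns up to $\per_\nn(\pp)$. Everything here is bookkeeping with the recursion and the monotonicity; the only real difficulty in the whole proposition is the left-compatibility base fact discussed above.
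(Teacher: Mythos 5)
The paper never proves Proposition~\ref{P:Proj}: it is quoted as Laver's result, with~\cite{Dgd} as the reference, so your argument can only be measured against the literature, not against an internal proof. Judged on its own terms, your architecture is sound, and every step you actually carry out is correct: the period bound (for $\pp < 2^\nn$ the values $\pp \OP\nn 1 < \pdots < \pp \OP\nn \per_\nn(\pp) = 2^\nn$ are distinct elements of $\{\pp+1 \wdots 2^\nn\}$, so $\per_\nn(\pp)$ is a power of~$2$ smaller than~$2^\nn$ and hence divides~$2^{\nn-1}$) gives right-compatibility; the uniqueness clause of Theorem~\ref{T:Laver}\ITEM1 identifies the quotient with~$\AA_{\nn-1}$ once the congruence is established; the downward induction on~$\pp$ with the upper-half dichotomy is correctly structured (in the inner step one has $\uu = \pp \OP\nn \qq > \pp > 2^{\nn-1}$ by~\eqref{E:LeftMultiples}, so either $\uu' > 2^{\nn-1}$ forces $\uu = \uu'$, or $\uu' \le 2^{\nn-1}$ and the hypothesis $E(\uu)$ with $\uu > \pp$ applies); and the single-threshold derivation of~\ITEM2 from~\ITEM1 and the strict increase in Theorem~\ref{T:Laver}\ITEM2 is exactly right.

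The one genuine hole, which you flag but do not close, is the base case $E(2^\nn)$, that is, the claim $2^{\nn-1} \OP\nn \qq \equiv \qq \pmod{2^{\nn-1}}$ for all~$\qq$. Your first escape route---that it comes for free from the customary construction of~$\AA_\nn$ from~$\AA_{\nn-1}$---is unavailable in the paper's framework, since Theorem~\ref{T:Laver} is stated as a black box and no such construction is on record. Your second route (folding it into the induction) does work, but note that it is not literally ``the same induction'': for $\pp = 2^\nn$ one has $\uu = 2^\nn \OP\nn \qq = \qq$, which need not lie in the upper half, so the dichotomy used for the other rows is not applicable and the step needs a separate, fortunately short, argument. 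Concretely: induct on~$\qq$, the case $\qq = 1$ reading $2^{\nn-1} \OP\nn 1 = 2^{\nn-1}+1 \equiv 1$. For the step, put $\vv = 2^{\nn-1} \OP\nn \qq$ with $\vv \equiv \qq$ by the inner hypothesis; then $2^{\nn-1} \OP\nn (\qq+1) = \vv \OP\nn (2^{\nn-1} \OP\nn 1) = \vv \OP\nn (2^{\nn-1}+1)$ by~\eqref{E:Laver2}. If $\vv < 2^\nn$, then $\per_\nn(\vv)$ divides~$2^{\nn-1}$ by your period bound, so $\vv \OP\nn (2^{\nn-1}+1) = \vv \OP\nn 1 = \vv+1 \equiv \qq+1$. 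If $\vv = 2^\nn$, then $\vv \OP\nn (2^{\nn-1}+1) = 2^{\nn-1}+1$, while $\vv \equiv \qq$ together with $\qq < 2^\nn$ forces $\qq = 2^{\nn-1}$, so the value is again $\equiv \qq+1$. With these few lines inserted, your proof is complete and self-contained relative to the paper's stated black boxes.
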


\begin{defi}
The number~$\rr$ in~\ITEM2 is called the \emph{threshold} of~$\pp$ in~$\AA_\nn$ and it is denoted by~$\thres_\nn(\pp)$.
\end{defi}

Proposition~\ref{P:Proj} implies that the structure~$\AA_{\nn-1}$ and the sequence of numbers $\thres_\nn(1)$, ..., $\thres_\nn(2^{\nn-1}-1)$ completely determine the structure~$\AA_\nn$. 
Indeed, Point~\ITEM1 implies that, for all $\pp, \qq \le 2^\nn$, if we write $\bar\pp$ for $\MOD\pp{2^{\nn-1}}$ and $\bar\qq$ for~$\MOD\qq{2^{\nn-1}}$, then we have $\proj_\nn(\pp \op_\nn \qq) = \bar\pp \OP{\nn-1} \bar\qq$,  whence 
$$\pp \op_\nn \qq \ \in\  \{ \: \bar\pp \OP{\nn-1} \bar\qq \: , \:  \bar\pp \OP{\nn-1} \bar\qq + 2^{\nn-1} \: \}. $$
 For $2^{\nn-1} \le \pp < 2^\nn$, \eqref{E:LeftMultiples} implies $2^{\nn-1} \le \pp < \pp \OP\nn \qq$, so the only possibility is $\pp \OP\nn \qq = \bar\pp \OP{\nn-1} \bar\qq + 2^\nn$.
By contrast, for $\pp < 2^{\nn-1}$, neither value is excluded and, assuming $\nn \ge 1$, we have for instance $\pp \OP\nn 1 = \pp \OP{\nn-1} 1 = \pp + 1$. What Point~\ITEM2 of Proposition~\ref{P:Proj} says is that the row of~$\pp$ in~$\AA_\nn$ begins with~$\thres_\nn(\pp)$ values common with those of~$\AA_{\nn-1}$, followed by~$\per_\nn(\pp)-\thres_\nn(\pp)$ values that are shifted by~$2^{\nn-1}$. In order to better understand the situation, consider two cases: if we have $\thres_\nn(\pp) < \per_{\nn-1}(\pp)$, then we find $\pp \OP\nn \per_{\nn-1}(\pp) = 2^\nn$, in which case we deduce $\per_\nn(\pp) = \per_{\nn-1}(\pp)$; if we have $\thres_\nn(\pp) = \per_{\nn-1}(\pp)$, then we find $\pp \OP\nn \per_{\nn-1}(\pp) = 2^{\nn-1}$, and, as the values in the row of~$\pp$ in~$\AA_\nn$ must increase until the value~$2^\nn$ occurs, we deduce $\per_\nn(\pp) = 2\per_{\nn-1}(\pp)$, and the description of the $\pp$th row of~$\AA_\nn$ given in~\eqref{E:Proj} can be completed with
$$\pp \OP\nn \qq = \pp \OP{\nn-1} (\qq {-} \per_{\nn-1}(\pp)) + 2^{\nn-1} \mbox{\qquad for }\per_{\nn-1}(\pp) < \qq \le 2\per_{\nn-1}(\pp).$$
For instance, one can check in Table~\ref{T:First} that the threshold~$\thres_{3}(1)$ is~$2$ whereas $\thres_{4}(1)$ is~$1$: in the first case, this means that the first two values in the first row of~$\AA_{3}$, namely~$2$ and~$4$, are the first two values in the first row of~$\AA_{2}$, so, as $2$ is the period of~$1$ in~$\AA_2$, the period of~$1$ must jump to~$4$ in~$\AA_3$, the values necessarily being $2, 4, 6, 8$; in the second case, this means that only the first value~$2$ occurs in~$\AA_4$, the three other values being shifted by~$8$, so the period of~$1$ remains~$4$ in~$\AA_4$, the values in the first row necessarily being~$2$, $12$ (= $4{+}8$), $14$ ($= 6{+}8$), and~$16$.

Composing the projections~$\proj_\nn$ provides for all $\nn \ge \mm$ a surjective homomorphism~$\proj_{\nn, \mm}$ from~$\AA_\nn$ to~$\AA_\mm$. Some properties can then be lifted from~$\AA_\mm$ to~$\AA_\nn$. A typical example is as follows.

\begin{coro}
\label{C:Odd}
For all~$\nn \ge 1$ and $\pp, \qq \le 2^\nn$, the value~$\pp \OP\nn \qq$ is odd if and only if $\pp$ is even and $\qq$ is odd.
\end{coro}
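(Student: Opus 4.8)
The plan is to reduce the parity question to the smallest nontrivial table~$\AA_1$ by means of the projection homomorphisms, exactly the ``lifting'' mechanism announced just before the statement. Recall from Proposition~\ref{P:Proj} and the surrounding discussion that composing the maps~$\proj_\nn$ yields, for every $\nn \ge 1$, a surjective homomorphism $\proj_{\nn, 1} : \AA_\nn \to \AA_1$ sending~$\pp$ to~$\MOD\pp2$. With the convention fixed in the paper, $\MOD\pp2$ is~$1$ when~$\pp$ is odd and~$2$ when~$\pp$ is even, so the parity of an element of~$\AA_\nn$ is recorded precisely by its image in~$\AA_1$, the odd elements being those sent to~$1$.

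First I would invoke the homomorphism property: for all $\pp, \qq \le 2^\nn$,
$$\proj_{\nn, 1}(\pp \OP\nn \qq) = \proj_{\nn, 1}(\pp) \OP1 \proj_{\nn, 1}(\qq).$$
Hence $\pp \OP\nn \qq$ is odd if and only if the right-hand side equals~$1$, that is, if and only if $(\MOD\pp2) \OP1 (\MOD\qq2) = 1$ in~$\AA_1$. It then remains only to read off the table of~$\AA_1$ displayed in Table~\ref{T:First}, namely $1 \OP1 1 = 1 \OP1 2 = 2 \OP1 2 = 2$ and $2 \OP1 1 = 1$. Thus a product $\aa \OP1 \bb$ in~$\AA_1$ equals~$1$ exactly when $\aa = 2$ and $\bb = 1$. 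Translating this back through~$\proj_{\nn, 1}$, it says $\MOD\pp2 = 2$ and $\MOD\qq2 = 1$, i.e.\ $\pp$ is even and $\qq$ is odd, which is the asserted characterisation.

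There is no genuine obstacle here: the whole content is the observation that parity is a congruence for~$\OP\nn$ because it factors through the projection onto~$\AA_1$, after which the statement becomes a one-line inspection of a $2 \times 2$ table. The only point deserving a little care is the normalisation of representatives, since under the convention adopted in the paper the even elements project to~$2$ rather than to~$0$, so that the odd class is the one labelled~$1$ in~$\AA_1$; getting this labelling right is what guarantees that the single ``off-diagonal'' entry $2 \OP1 1 = 1$ corresponds to $\pp$ even and $\qq$ odd, and not the reverse.
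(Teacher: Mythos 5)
Your proof is correct and follows essentially the same route as the paper: apply the homomorphism $\proj_{\nn,1}$ from Proposition~\ref{P:Proj}, then read the parity off the $2\times 2$ table of~$\AA_1$ in Table~\ref{T:First}. Your extra care about the representative convention (even elements projecting to~$2$, not~$0$) is a nice touch but matches what the paper does implicitly.
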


\begin{proof}
Write $\bar\pp = \proj_{\nn, 1}(\pp)$ and $\bar\qq = \proj_{\nn, 1}(\qq)$. Proposition~\ref{P:Proj} implies $\proj_{\nn, 1}(\pp \OP\nn \qq) = \bar\pp \op_1 \bar\qq$. Now, we see on Table~\ref{T:First} that, for all $\bar\pp, \bar\qq$ in~$\{1, 2\}$, the value $\bar\pp \op_1 \bar\qq$ is odd if and only if we have $\bar\pp = 2$ and $\bar\qq = 1$, that is, if $\pp$ is even and $\qq$ is odd.
\end{proof}

Observe that the limit of the inverse system associated with the projections~$\proj_{\nn, \mm}$  consists of a selfdistributive operation on the set~$\ZZZZ_2$ of $2$-adic numbers.

To conclude this brief introduction to Laver tables, let us mention that some of their combinatorial properties, such as the result that the period~$\per_\nn(1)$ tends to~Ê$\infty$ with~$\nn$, keep so far an unusual logical status, being known to follow from some unprovable large cardinal axiom, but remaining open when such an axiom is not assumed~\cite{Dfp}. This paradoxical situation comes from the possibility of investigating Laver tables using elementary embeddings when the latter exist. However, all subsequent developments in the current article are independent from such issues.

\section{Basics on rack homology}
\label{S:RackHom}

In order to place our two-cocycle calculations for Laver tables in an appropriate context, we start with recalling some generalities on the rack homology of left-selfdistributive structures, originating from \cite{FRSRackHom}. Note that, working with left- and not with right-selfdistributivity as in most topological papers, we use a symmetric version of their constructions.

\begin{prop}
\label{P:Bicomplex}\cite{FRSRackHom}
Assume that $\op$ is a binary operation on a set~$\SS$ that obeys the left-selfdistributive law. For~$\kk \ge 1$, let $\CC_\kk(\SS)$ be a free $\ZZZZ$-module based on~$\SS^{\kk}$, and put $\CC_0(\SS)=\ZZZZ$. For $\kk > 0$ and $1 \le i \le \kk$, let $d^{\,\op}_{\kk;i}, d^{0}_{\kk;i} : \CC_\kk(\SS) \rightarrow \CC_{\kk-1}(\SS)$ be linear maps defined on~$\SS^{\kk}$ by 
\begin{gather*}
d^{\,\op}_{\kk;i}(x_1 \wdots  x_\kk) = (x_1 \wdots  x_{i-1}, \widehat{\xx_\ii}, x_i \op x_{i+1} \wdots  x_i \op x_{\kk}), \\
d^{\,0}_{\kk;i}(x_1 \wdots  x_\kk) = (x_1 \wdots  x_{i-1}, \widehat{x_i}, x_{i+1} \wdots x_{\kk}).
\end{gather*}
Put $\der^{\,\op}_{\kk}:= \sum_{i=1}^\kk (-1)^{i-1} d^{\,\op}_{\kk;i},$ and $\der^{\,0}_{\kk}:= \sum_{i=1}^\kk (-1)^{i-1} d^{0}_{\kk;i}$. Then $(\CC_\kk(\SS),\der^{\,\op}_{\kk},\der^{\,0}_{\kk})$ is a chain bicomplex, that is,  for every $\kk \ge 2$, we have 
\begin{equation}
\label{E:Bicomplex}
\der^{\,\op}_{\kk-1} \circ \der^{\,\op}_{\kk} = \der^{\,0}_{\kk-1} \circ \der^{\,0}_{\kk} = \der^{\,\op}_{\kk-1} \circ \der^{\,0}_{\kk}  + \der^{\,0}_{\kk-1} \circ \der^{\,\op}_{\kk}  = 0.
\end{equation}
\end{prop}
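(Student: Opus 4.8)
The plan is to verify the three identities in~\eqref{E:Bicomplex} by reducing each one to a collection of commutation relations among the face maps $d^{\,\op}_{\kk;\ii}$ and $d^{0}_{\kk;\ii}$, exactly as in the classical proof that the alternating sum of simplicial faces squares to zero. The key combinatorial input is a set of \emph{pre-simplicial identities}: for $\ii < \jj$ one expects relations of the form $d^{\,0}_{\kk-1;\ii} \comp d^{\,0}_{\kk;\jj} = d^{\,0}_{\kk-1;\jj-1} \comp d^{\,0}_{\kk;\ii}$, together with the analogous relations for the two mixed pairs $(d^{\,0}, d^{\,\op})$ and for the pair $(d^{\,\op}, d^{\,\op})$. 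Once these are in hand, each composite $\der_{\kk-1} \comp \der_{\kk}$ expands as a double sum $\sum_{\ii,\jj}(-1)^{\ii+\jj}(\cdots)$, and the pre-simplicial identities let one pair up the term indexed by $(\ii,\jj)$ with $\ii < \jj$ against the term indexed by $(\jj-1,\ii)$; the sign $(-1)^{\ii-1}(-1)^{\jj-2}$ versus $(-1)^{\jj-2}(-1)^{\ii-1}$ shows these cancel, so everything vanishes. This is the standard bookkeeping and I would not write it out in full.

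First I would establish the three basic commutation lemmas at the level of a single tuple $(\xx_1 \wdots \xx_\kk)$, checking each index range directly. The purely combinatorial identity $\der^{\,0}_{\kk-1}\comp\der^{\,0}_{\kk}=0$ is the usual one and needs no selfdistributivity. For $\der^{\,\op}_{\kk-1}\comp\der^{\,\op}_{\kk}=0$, applying $d^{\,\op}_{\kk;\ii}$ then $d^{\,\op}_{\kk-1;\jj}$ (with $\ii<\jj$) produces, in the surviving slots, expressions of the shape $\xx_\ii \op(\xx_\jj \op \xx_\ell)$, whereas the reverse order $d^{\,\op}_{\kk-1;\jj-1}\comp d^{\,\op}_{\kk;\ii}$ produces $(\xx_\ii \op \xx_\jj)\op(\xx_\ii \op \xx_\ell)$; these agree \emph{precisely} by the law~\eqref{E:LD}. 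This is the one place where left-selfdistributivity is genuinely used, and it is the heart of the argument. For the mixed identity $\der^{\,\op}_{\kk-1}\comp\der^{\,0}_{\kk}+\der^{\,0}_{\kk-1}\comp\der^{\,\op}_{\kk}=0$, I would match a term from the first composite against a term from the second: deleting slot~$\ii$ plainly and then acting by~$\op$ on the right block should coincide with acting by~$\op$ first and then deleting a plainly-deleted slot, the cross-terms between the two sums cancelling in pairs by the same sign mechanism, with no appeal to~\eqref{E:LD} beyond the compatibility of the deletions.

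The main obstacle I anticipate is purely clerical rather than conceptual: keeping the index shifts and the hat-position bookkeeping correct across the four index regimes ($\ii<\jj$, $\ii=\jj$, $\ii=\jj-1$, $\ii>\jj$), since the map $d^{\,\op}_{\kk;\ii}$ not only drops the $\ii$th entry but simultaneously acts by $\xx_\ii \op(-)$ on \emph{all} later entries, so a second face map sees arguments that have already been transformed. Care is needed to confirm that after applying $d^{\,\op}_{\kk;\ii}$ the subsequent application of $d^{\,\op}_{\kk-1;\jj-1}$ truly lands on the transformed entries in the way~\eqref{E:LD} requires. I would organize this by fixing the convention that a hat simply marks a deleted coordinate and then tracking, for each output slot, which original variables contribute; once that dictionary is set up the verification of each of the three equalities in~\eqref{E:Bicomplex} is a routine, if slightly tedious, sign count, and I would present only the selfdistributive step in detail.
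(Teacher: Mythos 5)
Your proposal follows essentially the same route as the paper's own proof, which likewise reduces \eqref{E:Bicomplex} to the commutation relations $d^{\diamond}_{\kk-1;\jj} \circ d^{\star}_{\kk;\ii} = d^{\star}_{\kk-1;\ii-1} \circ d^{\diamond}_{\kk;\jj}$ for $1 \le \jj < \ii \le \kk$ and $\diamond, \star \in \{\op, 0\}$ (the precubical identities), with left-selfdistributivity entering exactly where you say, namely in the case $\diamond = \star = \op$, followed by the standard sign cancellation. One clerical slip to fix when writing it up: the two signs you display, $(-1)^{\ii-1}(-1)^{\jj-2}$ and $(-1)^{\jj-2}(-1)^{\ii-1}$, are literally equal and so cannot exhibit cancellation; the term with outer index $\ii$ and inner index $\jj$ (where $\ii < \jj$) carries the sign $(-1)^{\ii-1}(-1)^{\jj-1}$, while its partner with outer index $\jj-1$ and inner index $\ii$ carries $(-1)^{\jj-2}(-1)^{\ii-1}$, and these are indeed opposite.
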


\begin{proof}
The fastest and probably the most conceptual proof of the statement is based on the (pre)cubical cohomology ideas, as developed in \cite{SerreThesis,KanCubic,BH_cubes}. Concretely, a direct computation shows that, for all $1 \le j < i \le \kk$ and for every choice of $\diamond$ and $\star$ in~$\{\op, 0\}$, the relation 
\begin{equation*}
d^{\diamond}_{\kk-1;j} \circ d^{\star}_{\kk;i}= d^{\star}_{\kk-1;i-1} \circ d^{\diamond}_{\kk;j}
\end{equation*}
is satisfied. Together with a careful sign juggling, this gives \eqref{E:Bicomplex}.
\end{proof}

\begin{coro}
\label{C:DistrHom}
Every $\ZZZZ$-linear combination $\der_{\kk}$ of~$\der^{\,\op}_{\kk}$ and $\der^{\,0}_{\kk}$ defines a chain complex structure on~$\CC_\kk(\SS)$, that is, we have $\der_{\kk-1} \circ \der_{\kk} = 0$.
\end{coro}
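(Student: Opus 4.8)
The plan is to reduce the claim, which asserts that an arbitrary $\ZZZZ$-linear combination $\der_\kk = \alpha\,\der^{\,\op}_\kk + \beta\,\der^{\,0}_\kk$ squares to zero, directly to the three identities collected in~\eqref{E:Bicomplex}. First I would expand the composite $\der_{\kk-1}\circ\der_\kk$ bilinearly. Writing $\der_{\kk-1} = \alpha\,\der^{\,\op}_{\kk-1} + \beta\,\der^{\,0}_{\kk-1}$ and $\der_\kk = \alpha\,\der^{\,\op}_\kk + \beta\,\der^{\,0}_\kk$, the composite distributes into four terms:
\begin{equation*}
\der_{\kk-1}\circ\der_\kk = \alpha^2\,\der^{\,\op}_{\kk-1}\comp\der^{\,\op}_\kk + \beta^2\,\der^{\,0}_{\kk-1}\comp\der^{\,0}_\kk + \alpha\beta\bigl(\der^{\,\op}_{\kk-1}\comp\der^{\,0}_\kk + \der^{\,0}_{\kk-1}\comp\der^{\,\op}_\kk\bigr).
\end{equation*}

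The key step is then simply to invoke Proposition~\ref{P:Bicomplex}: the equalities in~\eqref{E:Bicomplex} assert precisely that each of the three grouped expressions above vanishes. The pure $\der^{\,\op}$ term is killed by $\der^{\,\op}_{\kk-1}\comp\der^{\,\op}_\kk = 0$, the pure $\der^{\,0}$ term by $\der^{\,0}_{\kk-1}\comp\der^{\,0}_\kk = 0$, and the mixed term, whose coefficient is $\alpha\beta$, is exactly the combination $\der^{\,\op}_{\kk-1}\comp\der^{\,0}_\kk + \der^{\,0}_{\kk-1}\comp\der^{\,\op}_\kk$ shown to vanish in~\eqref{E:Bicomplex}. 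Hence every coefficient multiplies a zero operator and the whole sum is the zero map, which is the desired $\der_{\kk-1}\comp\der_\kk = 0$.

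I expect there to be essentially no obstacle here: the content of the corollary is entirely contained in the bicomplex relation already proved, and the only thing one must be slightly careful about is that the cross terms genuinely combine into the single bracketed expression appearing in~\eqref{E:Bicomplex} rather than vanishing individually—indeed, $\der^{\,\op}_{\kk-1}\comp\der^{\,0}_\kk$ and $\der^{\,0}_{\kk-1}\comp\der^{\,\op}_\kk$ need not each be zero, only their sum is. This is the standard observation that a bicomplex yields a total (or diagonal) complex, and the argument applies verbatim to any scalars $\alpha,\beta$ in~$\ZZZZ$, so no case distinction on the coefficients is required.
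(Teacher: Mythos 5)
Your proof is correct and is exactly the argument the paper intends: the corollary is stated as an immediate consequence of Proposition~\ref{P:Bicomplex}, and your bilinear expansion into the $\alpha^2$, $\beta^2$, and $\alpha\beta$ terms, each killed by one of the three identities in~\eqref{E:Bicomplex}, is the standard totalization argument the authors leave implicit. Your remark that the two cross terms vanish only jointly, not individually, is precisely the point worth being careful about.
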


\begin{rema}
An alternative proof of the statement consists in interpreting~$\der^{\,0}$ as~$\der^{\,\OP0}$ for the trivial left-selfdistributive operation $\xx \OP0 \yy = \yy$, observing that operations~$\OP0$ and~$\op$ are mutually distributive---that is, $(\SS, \op, \OP0)$ is a multi-LD-system, as defined in~\cite{Dft} and~\cite{Larue} and rediscovered---under the name of multishelf---in~\cite{PrzHomo,PrzSi}, and applying the general multi-term distributive homology theory from~\cite{PrzHomo,PrzSi}.
\end{rema}

We then follow the standard terminology and notation.

\begin{defi}
\label{D:DistrHom}
Assume that $\SS$ is a set and $\op$ is a binary operation on~$\SS$ that obeys the left-distributive law.

\ITEM1 For every~$\kk \ge 1$, we put $\der^\Rack_{\kk}= \der^{\,\op}_{\kk}-\der^{\,0}_{\kk}$. Then the complex $(\CC_\kk(\SS),\der^\Rack_{\kk})$ is called the \emph{rack complex} of~$(\SS, \op)$, and its homology, denoted by~$H^\Rack_\kk(\SS),$ is called the \emph{rack homology} of~$(\SS, \op)$.

\ITEM2 Assume that $\GG$ is an abelian group, and consider the cochain complex $\CC^\kk(\SS;\GG)$ defined as the abelian group $\Hom_{\ZZZZ}(\CC_\kk(\SS), \GG)$ endowed with the differential $\der_\Rack^{\kk}$ induced by $\der^\Rack_{\kk}$. Then the functions in the image of~$\der_\Rack^{\kk-1}$ are called $\GG$-valued \emph{$\kk$-coboundaries} and their set is denoted by~$\BB_\Rack^\kk(\SS; \GG)$, whereas those in the kernel of~$\der_\Rack^\kk$ are called $\GG$-valued \emph{$\kk$-cocycles}, and their set is denoted by~$\ZZ_\Rack^\kk(\SS; \GG)$. The quotient $\ZZ_\Rack^\kk(\SS; \GG){/}\BB_\Rack^\kk(\SS; \GG)$ is called the $\GG$-valued \emph{rack cohomology} of~$(\SS, \op)$, and denoted by $\HH_\Rack^\kk(\SS; \GG)$.
\end{defi}

\begin{rema}
In the distributive world, the rack complex can be seen as the analogue of the Hochschild complex for associative algebras, whereas the complex $(\CC_\kk(\SS),\der^{\,\op}_{\kk})$, known as the 1-term distributive complex of~$(\SS, \op)$, as the analogue of the bar complex. This was pointed out in \cite{PrzHomo} and explained in the context of a unifying braided homology theory in \cite{Lebed1}.
\end{rema}

In what follows we mostly work with~$\GG = \ZZZZ$. However, all results extend to the case of an arbitrary abelian group with obvious modifications. 

For further reference, note the explicit values for~$\der_\Rack^\kk$ with $1 \le \kk \le 4$:
\begin{gather}
\label{E:Der1}
\der_\Rack^1 \phi\, (\xx) = 0,\\
\label{E:Der2}
\der_\Rack^2 \phi\, (\xx, \yy) = \phi(\xx \op \yy) - \phi(\yy),\\
\label{E:Der3}
\der_\Rack^3 \phi\, (\xx, \yy, \zz) = \phi(\xx \op \yy, \xx \op \zz) + \phi(\xx, \zz) - \phi(\xx, \yy \op \zz) - \phi(\yy, \zz),\\
\label{E:Der4}
\der_\Rack^4 \phi\, (\xx, \yy, \zz, \tt) = \phi(\xx \op \yy, \xx \op \zz, \xx \op \tt) + \phi(\xx, \yy, \zz \op \tt)\hspace{20mm}\\
\notag
\hspace{20mm}+ \phi(\xx, \zz, \tt) - \phi(\xx, \yy \op \zz, \yy \op \tt) - \phi(\yy, \zz, \tt) - \phi(\xx, \yy, \tt).
\end{gather}

Below we shall specifically consider $2$- and $3$-cocycles. Let us briefly recall why such cocycles are directly interesting for constructing topological invariants, more precisely for defining quantities that are invariant under Reidemeister~$\mathrm{III}$ moves. Let us begin with $2$-cocycles. Restarting with the colouring rule used in Figure~\ref{F:Colouring}, we can use a $2$-cocycle to attach an element of~$\GG$ to every crossing. The general result is then 

\begin{lemm}\cite{CJKLS}
\label{L:Use2Cocycle}
Assume that $(\SS, \op)$ is a left-selfdistributive structure, $\GG$ is an abelian group, and $\phi$ is a $\GG$-valued $2$-cocycle for~$\SS$. For $\DD$ a positive $\nn$-strand braid diagram and $\vec\aa$ in~$\SS^\nn$, define $\phih_\DD(\vec\aa) = \sum_\ii \phi(\bb_\ii, \cc_\ii)$ where $\bb_\ii, \cc_\ii$ are the input colours at the $\ii$th crossing of~$\DD$ when $\DD$ is coloured from~$\vec\aa$. Then $\phih_\DD$ is invariant Reidemeister~$\mathrm{III}$  moves.
\end{lemm}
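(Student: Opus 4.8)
The plan is to verify directly that the quantity $\phih_\DD(\vec\aa)$ is unchanged when one applies a single Reidemeister~$\mathrm{III}$ move to~$\DD$, since this is the only move under which invariance is claimed (we work with positive braid diagrams, so only Reidemeister~$\mathrm{III}$ is relevant). Because a Reidemeister~$\mathrm{III}$ move is a purely local modification involving exactly three strands and three crossings, and the colouring rule propagates colours deterministically from left to right, the colours on all strands entering and leaving the local picture are identical before and after the move. Hence the contributions $\phi(\bb_\ii, \cc_\ii)$ coming from crossings \emph{outside} the modified region are literally the same sum on both sides, and the whole problem reduces to comparing the three-crossing contributions inside the two local pictures of Figure~\ref{F:Colouring}.

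First I would set up the local colouring explicitly. Feeding input colours $\aa, \bb, \cc$ (top to bottom, say) into the left-hand diagram of Figure~\ref{F:Colouring} and propagating according to the rule ``a $\bb$-strand overcrossed by an $\aa$-arc becomes $\aa \op \bb$'', one reads off that the three crossings have input colour-pairs $(\bb, \cc)$, $(\aa, \bb\op\cc)$, and $(\aa, \bb)$, so their total contribution is
\begin{equation*}
\phi(\bb, \cc) + \phi(\aa, \bb\op\cc) + \phi(\aa, \bb).
\end{equation*}
Doing the same for the right-hand diagram yields input colour-pairs $(\aa, \bb)$, $(\aa, \cc)$, and $(\aa\op\bb, \aa\op\cc)$, with total contribution
\begin{equation*}
\phi(\aa, \bb) + \phi(\aa, \cc) + \phi(\aa\op\bb, \aa\op\cc).
\end{equation*}
I would then note that the common term $\phi(\aa,\bb)$ cancels, so invariance is equivalent to the identity
\begin{equation*}
\phi(\aa\op\bb, \aa\op\cc) + \phi(\aa, \cc) - \phi(\aa, \bb\op\cc) - \phi(\bb, \cc) = 0.
\end{equation*}

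The key step is to recognize the left-hand side as exactly $\der_\Rack^3 \phi\,(\aa, \bb, \cc)$, the value of the coboundary operator written out in~\eqref{E:Der3}. Since $\phi$ is assumed to be a $2$-cocycle, we have $\der_\Rack^3 \phi = 0$ by definition of~$\ZZ_\Rack^2(\SS;\GG)$, and the identity holds, completing the argument. The only genuine care needed—and the main place to avoid a slip—is bookkeeping the colour propagation so that the crossing inputs are read off correctly and match the three arguments of~$\der_\Rack^3$ in the right order; everything else is a direct substitution. One should also remark that the propagation is consistent precisely because $\op$ obeys~\eqref{E:LD}, which guarantees that both local pictures produce the same \emph{output} colours (as recorded in Figure~\ref{F:Colouring}), so the move is well defined on coloured diagrams in the first place; the cocycle condition is then what upgrades ``same output colours'' to ``same accumulated weight''.
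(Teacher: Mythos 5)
Your proposal is correct and follows essentially the same route as the paper: the paper's proof is precisely the local computation recorded in Figure~\ref{F:Cocycle}, where the three-crossing contributions of the two sides of the Reidemeister~$\mathrm{III}$ move are read off and equated via the cocycle rule~\eqref{E:Cocycle}, which is the identity $\der_\Rack^3\phi = 0$ you invoke. Your bookkeeping of the crossing input pairs and the reduction to that single identity match the paper's verification exactly.
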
 

The verification is shown in Figure~\ref{F:Cocycle}.

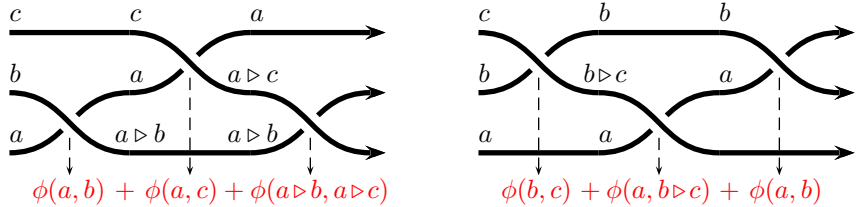
\begin{figure}[htb]
\begin{picture}(50,25)(0,-6)
\psbezier[linewidth=2pt,border=3pt](0,0)(8,0)(8,8)(16,8)
\psbezier[linewidth=2pt,border=3pt](0,8)(8,8)(8,0)(16,0)
\psline[linewidth=2pt,border=3pt](0,16)(16,16)
\psbezier[linewidth=2pt,border=3pt](16,8)(24,8)(24,16)(32,16)
\psbezier[linewidth=2pt,border=3pt](16,16)(24,16)(24,8)(32,8)
\psline[linewidth=2pt,border=3pt](16,0)(32,0)
\psbezier[linewidth=2pt,border=3pt](32,0)(40,0)(40,8)(48,8)
\psbezier[linewidth=2pt,border=3pt](32,8)(40,8)(40,0)(48,0)
\psline[linewidth=2pt,border=3pt](32,16)(48,16)
\psline[linewidth=2pt,border=3pt]{->}(48,0)(50,0)
\psline[linewidth=2pt,border=3pt]{->}(48,8)(50,8)
\psline[linewidth=2pt,border=3pt]{->}(48,16)(50,16)
\put(0,1.5){$\aa$}
\put(0,9.5){$\bb$}
\put(0,17.5){$\cc$}
\put(16,17.5){$\cc$}
\put(14,1.5){$\aa\op\bb$}
\put(16,9.5){$\aa$}
\put(29,9.5){$\aa\op\cc$}
\put(29,1.5){$\aa\op\bb$}
\put(32,17.5){$\aa$}
\psline[linewidth=0.5pt,linestyle=dashed]{->}(8,2)(8,-3)
\put(3,-6){\color{red}$\phi(\aa, \bb)$}
\put(14,-6){$\color{red}+$}
\put(28.5,-6){$\color{red}+$}
\psline[linewidth=0.5pt,linestyle=dashed]{->}(24,10)(24,-3)
\put(18,-6){\color{red}$\phi(\aa, \cc)$}
\psline[linewidth=0.5pt,linestyle=dashed]{->}(40,2)(40,-3)
\put(32,-6){\color{red}$\phi(\aa\ops\bb, \aa\ops\cc)$}
\end{picture}
\hspace{10mm}
\begin{picture}(50,25)(0,-6)
\psbezier[linewidth=2pt,border=3pt](0,8)(8,8)(8,16)(16,16)
\psbezier[linewidth=2pt,border=3pt](0,16)(8,16)(8,8)(16,8)
\psline[linewidth=2pt,border=3pt](0,0)(16,0)
\psbezier[linewidth=2pt,border=3pt](16,0)(24,0)(24,8)(32,8)
\psbezier[linewidth=2pt,border=3pt](16,8)(24,8)(24,0)(32,0)
\psline[linewidth=2pt,border=3pt](16,16)(32,16)
\psbezier[linewidth=2pt,border=3pt](32,8)(40,8)(40,16)(48,16)
\psbezier[linewidth=2pt,border=3pt](32,16)(40,16)(40,8)(48,8)
\psline[linewidth=2pt,border=3pt](32,0)(48,0)
\psline[linewidth=2pt,border=3pt]{->}(48,0)(50,0)
\psline[linewidth=2pt,border=3pt]{->}(48,8)(50,8)
\psline[linewidth=2pt,border=3pt]{->}(48,16)(50,16)
\put(0,1.5){$\aa$}
\put(0,9.5){$\bb$}
\put(0,17.5){$\cc$}
\put(16,17.5){$\bb$}
\put(16,1.5){$\aa$}
\put(14,9.5){$\bb\ops\cc$}
\put(32,9.5){$\aa$}
\put(32,17.5){$\bb$}
\psline[linewidth=0.5pt,linestyle=dashed]{->}(8,10)(8,-3)
\put(3,-6){\color{red}$\phi(\bb, \cc)$}
\put(13.5,-6){\color{red}$+$}
\psline[linewidth=0.5pt,linestyle=dashed]{->}(24,2)(24,-3)
\put(17,-6){\color{red}$\phi(\aa, \bb\ops\cc)$}
\put(32,-6){\color{red}$+$}
\psline[linewidth=0.5pt,linestyle=dashed]{->}(40,10)(40,-3)
\put(36,-6){\color{red}$\phi(\aa, \bb)$} 
\end{picture}
\caption{\sf\small Using a $2$-cocycle to construct a positive braid invariant: one associates with every braid diagram the sum of the values of the cocycle at the successive crossings labelled by means of the reference left-selfdistributive structure; the cocycle rule~\eqref{E:Cocycle} is exactly what is needed to guarantee invariance with respect to Reidemeister~$\mathrm{III}$ moves.}
\label{F:Cocycle}
\end{figure}

The principle for using $3$-cocycles is similar, at the expense of combining arc colouring with region colouring, as explained in Figure~\ref{F:ColouringShadow}---here we use the same auxiliary set~$\SS$ to colour arcs and regions, but more general rules involving what is called rack-sets or rack shadows are also possible, see \cite{FRSRackHom, FRSJames, Kam, CKS, ChNe}. 

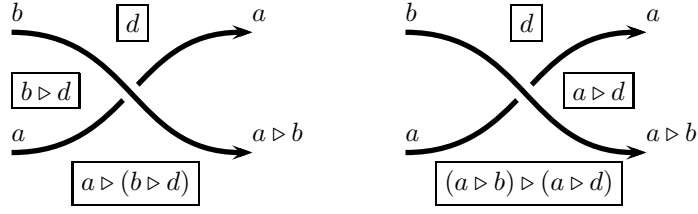
\begin{figure}[htb]
\begin{picture}(40,25)(0,-6)
\psbezier[linewidth=2pt,border=3pt]{->}(0,0)(16,0)(16,16)(32,16)
\psbezier[linewidth=2pt,border=3pt]{->}(0,16)(16,16)(16,0)(32,0)
\put(0,1.5){$\aa$}
\put(0,17.5){$\bb$}
\put(32,1.5){$\aa\op\bb$}
\put(32,17.5){$\aa$}
\put(14,16){$\boxed{\dd}$}
\put(0,7){$\boxed{\bb\op\dd}$}
\put(8,-5){$\boxed{\aa\op (\bb\op\dd)}$}
\end{picture}
\hspace{10mm}
\begin{picture}(40,25)(0,-6)
\psbezier[linewidth=2pt,border=3pt]{->}(0,0)(16,0)(16,16)(32,16)
\psbezier[linewidth=2pt,border=3pt]{->}(0,16)(16,16)(16,0)(32,0)
\put(0,1.5){$\aa$}
\put(0,17.5){$\bb$}
\put(32,1.5){$\aa\op\bb$}
\put(32,17.5){$\aa$}
\put(14,16){$\boxed{\dd}$}
\put(21,7){$\boxed{\aa\op\dd}$}
\put(4,-5){$\boxed{(\aa\op\bb)\op (\aa\op\dd)}$}
\end{picture}
\caption{\sf\small Region colouring: if the region above an $\aa$-coloured arc is coloured with~$\dd$, then the region below is coloured with~$\aa \ops \dd$; the coherence of this colouring rule is equivalent to the left-selfdistributivity law for the operation~$\op$.}
\label{F:ColouringShadow}
\end{figure}

The counterpart of Lemma~\ref{L:Use2Cocycle} is then 

\begin{lemm}\cite{FRSJames}
\label{L:Use3Cocycle}
In the settings of  Lemma~\ref{L:Use2Cocycle},  assume that $\phi$ is a $\GG$-valued $3$-cocycle for~$\SS$, and, together with arcs, colour the regions of the diagram~$\DD$, starting from colour~$\dd$ for the region on the top. Define $\phih_\DD(\vec\aa,\dd) = \sum_\ii \phi(\bb_\ii, \cc_\ii, \dd_\ii)$ where $\bb_\ii, \cc_\ii$ are the input colours and $\dd_\ii$ is the upper region colour at the $\ii$th crossing of~$\DD$. Then $\phih_\DD$ is invariant under Reidemeister~$\mathrm{III}$ moves.
\end{lemm}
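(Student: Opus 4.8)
The plan is to mimic the proof of Lemma~\ref{L:Use2Cocycle} sketched in Figure~\ref{F:Cocycle}, now keeping track of region colours on top of arc colours. Since $\phih_\DD$ is by construction a sum of local contributions, one per crossing, and a Reidemeister~$\mathrm{III}$ move alters the diagram only inside a small disk, it suffices to compare the two sums over the three crossings lying inside that disk; every other crossing, together with its upper region colour, is left untouched. Before comparing, I would record that the colouring on the boundary of the disk agrees for the two sides of the move: the arc colours match by the left-selfdistributivity computation already used in Figure~\ref{F:Colouring}, and the region colours match by the coherence of the region-colouring rule noted in Figure~\ref{F:ColouringShadow}, which is again exactly~\eqref{E:LD}. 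This makes the comparison purely local and legitimate.

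Concretely, I would feed three strands with input colours $\aa, \bb, \cc$ (bottom to top) into the tangle and colour the region lying above all three strands with~$\dd$. Propagating arc colours through the left-hand tangle of Figure~\ref{F:Cocycle} exactly as in the $2$-cocycle case, and reading off the upper region colour at each of its three crossings by means of the rule of Figure~\ref{F:ColouringShadow}, one obtains the contributions $\phi(\aa, \bb, \cc\op\dd)$, then $\phi(\aa, \cc, \dd)$, and finally $\phi(\aa\op\bb, \aa\op\cc, \aa\op\dd)$. Performing the same bookkeeping for the right-hand tangle yields $\phi(\bb, \cc, \dd)$, then $\phi(\aa, \bb\op\cc, \bb\op\dd)$, and finally $\phi(\aa, \bb, \dd)$.

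It then remains to check that the two totals coincide. Subtracting the second from the first gives precisely
$$\phi(\aa\op\bb, \aa\op\cc, \aa\op\dd) + \phi(\aa, \bb, \cc\op\dd) + \phi(\aa, \cc, \dd) - \phi(\aa, \bb\op\cc, \bb\op\dd) - \phi(\bb, \cc, \dd) - \phi(\aa, \bb, \dd),$$
which is nothing but $\der_\Rack^4 \phi\,(\aa, \bb, \cc, \dd)$, read off from~\eqref{E:Der4} with $\xx, \yy, \zz, \tt$ specialized to $\aa, \bb, \cc, \dd$. As $\phi$ is a $3$-cocycle, this expression vanishes, so the two sums are equal and $\phih_\DD$ is unchanged by the move, as desired.

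As for the main obstacle, there is no conceptual difficulty: the whole argument is a finite bookkeeping computation, and its outcome is forced to agree with~\eqref{E:Der4}. The one genuinely delicate point is the correct identification of the upper region colour at each of the six crossings: one must carry the region-colouring rule through the successive crossings and keep straight which arc separates a given crossing from the top region~$\dd$, so that the left-multiplications by the intervening colours are applied in the right order. An error there would destroy the exact match with~$\der_\Rack^4 \phi$; the safe route, as in Figure~\ref{F:Cocycle}, is to annotate a careful picture of the two tangles with all arc and region colours before summing.
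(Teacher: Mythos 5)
Your proposal is correct and is essentially the paper's own proof, which consists precisely of the verification recorded in Figure~\ref{F:Cocycle3}: the six crossing contributions you list are exactly those annotated there, and their difference is $\der_\Rack^4\phi\,(\aa,\bb,\cc,\dd)$, which vanishes by the $3$-cocycle condition~\eqref{E:Cocycle4}. The locality of the move and the matching of boundary arc and region colours, which you make explicit, are exactly the implicit framework the paper relies on, so there is no substantive difference.
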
 

The proof is now contained in Figure~\ref{F:Cocycle3}.

\begin{figure}[htb]
\begin{picture}(55,30)(0,-6)
\psbezier[linewidth=2pt,border=3pt](0,0)(8,0)(8,8)(16,8)
\psbezier[linewidth=2pt,border=3pt](0,8)(8,8)(8,0)(16,0)
\psline[linewidth=2pt,border=3pt](0,16)(16,16)
\psbezier[linewidth=2pt,border=3pt](16,8)(24,8)(24,16)(32,16)
\psbezier[linewidth=2pt,border=3pt](16,16)(24,16)(24,8)(32,8)
\psline[linewidth=2pt,border=3pt](16,0)(32,0)
\psbezier[linewidth=2pt,border=3pt](32,0)(40,0)(40,8)(48,8)
\psbezier[linewidth=2pt,border=3pt](32,8)(40,8)(40,0)(48,0)
\psline[linewidth=2pt,border=3pt](32,16)(48,16)
\psline[linewidth=2pt,border=3pt]{->}(48,0)(50,0)
\psline[linewidth=2pt,border=3pt]{->}(48,8)(50,8)
\psline[linewidth=2pt,border=3pt]{->}(48,16)(50,16)
\put(0,1.5){$\aa$}
\put(0,9.5){$\bb$}
\put(0,17.5){$\cc$}
\put(16,9.5){$\aa$}
\put(29,9.5){$\aa\op\cc$}
\put(29,1.5){$\aa\op\bb$}
\put(36,17.5){$\aa$}
\put(22,19.5){$\boxed{\dd}$}
\put(4,10.5){$\boxed{\cc\op\dd}$}
\put(38,10.5){$\boxed{\aa\op\dd}$}
\psline[linewidth=0.5pt,linestyle=dashed]{->}(8,2)(8,-3)
\put(-4,-6){\color{red}$\phi(\aa, \bb,\cc\op\dd)$}
\psline[linewidth=0.5pt,linestyle=dashed]{->}(24,10)(24,-3)
\put(17,-6){\color{red}$\phi(\aa, \cc,\dd)$}
\psline[linewidth=0.5pt,linestyle=dashed]{->}(40,2)(40,-3)
\put(34,-6){\color{red}$\phi(\aa\ops\bb, \aa\ops\cc, \aa\op\dd)$}
\put(14.2,-6){$\color{red}+$}
\put(30.5,-6){$\color{red}+$}
\end{picture}
\hspace{10mm}
\begin{picture}(50,30)(0,-6)
\psbezier[linewidth=2pt,border=3pt](0,8)(8,8)(8,16)(16,16)
\psbezier[linewidth=2pt,border=3pt](0,16)(8,16)(8,8)(16,8)
\psline[linewidth=2pt,border=3pt](0,0)(16,0)
\psbezier[linewidth=2pt,border=3pt](16,0)(24,0)(24,8)(32,8)
\psbezier[linewidth=2pt,border=3pt](16,8)(24,8)(24,0)(32,0)
\psline[linewidth=2pt,border=3pt](16,16)(32,16)
\psbezier[linewidth=2pt,border=3pt](32,8)(40,8)(40,16)(48,16)
\psbezier[linewidth=2pt,border=3pt](32,16)(40,16)(40,8)(48,8)
\psline[linewidth=2pt,border=3pt](32,0)(48,0)
\psline[linewidth=2pt,border=3pt]{->}(48,0)(50,0)
\psline[linewidth=2pt,border=3pt]{->}(48,8)(50,8)
\psline[linewidth=2pt,border=3pt]{->}(48,16)(50,16)
\put(0,1.5){$\aa$}
\put(0,9.5){$\bb$}
\put(0,17.5){$\cc$}
\put(14,9.5){$\bb\ops\cc$}
\put(32,9.5){$\aa$}
\put(32,17.5){$\bb$}
\put(22,19.5){$\boxed{\dd}$}
\put(21,10.5){$\boxed{\bb\op\dd}$}
\psline[linewidth=0.5pt,linestyle=dashed]{->}(8,10)(8,-3)
\put(-3,-6){\color{red}$\phi(\bb, \cc,\dd)$}
\put(10,-6){$\color{red}+$}
\psline[linewidth=0.5pt,linestyle=dashed]{->}(24,2)(24,-3)
\put(13,-6){\color{red}$\phi(\aa, \bb\ops\cc,\bb\op\dd)$}
\put(35.6,-6){$\color{red}+$}
\psline[linewidth=0.5pt,linestyle=dashed]{->}(40,10)(40,-3)
\put(39,-6){\color{red}$\phi(\aa, \bb,\dd)$} 
\end{picture}
\caption{\sf\small Using a $3$-cocycle to construct a positive braid invariant: the construction is similar to that using a $2$-cocycle, but this time the upper region colour is taken into consideration at each crossing; the cocycle rule~\eqref{E:Cocycle4} is exactly what is needed to guarantee invariance with respect to Reidemeister~$\mathrm{III}$ moves; only the colours of the regions relevant to our construction are indicated in the figure.}
\label{F:Cocycle3}
\end{figure}
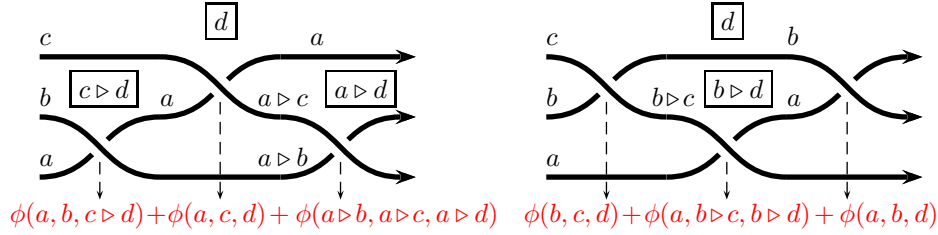

In practice, the same $\SS$ often admits considerably more $3$-cocycles than $2$-cocycles, so the former allow to extract more invariants out of the same reference structure. A construction of knotted surface invariants in $\RRRR^4$ out of $3$-cocycles is another motivation for hunting for $3$-cocycles, see \cite{Kam,CarterSurvey} for details.

\section{Two-cocycles for Laver tables}
\label{S:TwoPhi}

 From now on, our aim is to investigate the rack cohomology of Laver tables and, more precisely, to determine the associated cocycles. As a warm-up, we consider the (simple) case of degree~$1$ rack cohomology.
 
\begin{prop}
\label{P:H1} 
For every~$\nn \ge 0$, the first rack cohomology group $\HH_\Rack^1(\AA_\nn; \ZZZZ)$ is a free $1$-dimensional $\ZZZZ$-module generated by the equivalence class of the constant map with value~$1$. \end{prop}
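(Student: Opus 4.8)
The plan is to reduce the computation of $\HH_\Rack^1(\AA_\nn;\ZZZZ)$ to an elementary rigidity statement about integer-valued functions on~$\AA_\nn$, and then to exploit the combinatorics of the first column of the table. First I would pin down the relevant (co)chain data. By~\eqref{E:Der1}, the coboundary operator producing degree-one cochains vanishes identically, so $\BB_\Rack^1(\AA_\nn;\ZZZZ)=0$ and consequently $\HH_\Rack^1(\AA_\nn;\ZZZZ)=\ZZ_\Rack^1(\AA_\nn;\ZZZZ)$. By~\eqref{E:Der2}, a map $\phi\colon\AA_\nn\to\ZZZZ$ lies in $\ZZ_\Rack^1(\AA_\nn;\ZZZZ)$ if and only if
$$\phi(\xx\OP\nn\yy)=\phi(\yy)\qquad\text{for all }\xx,\yy\in\AA_\nn.$$
Thus the proposition is equivalent to the claim that the solutions of this system are precisely the constant maps, which evidently form a free $\ZZZZ$-module of rank~$1$ generated by the constant map with value~$1$.

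That constant maps satisfy the cocycle relation is immediate, so the real content is the converse: every $1$-cocycle is constant. For this I would specialize the relation to $\yy=1$ and invoke the initial condition~\eqref{E:Laver1}, giving $\xx\OP\nn 1=\MOD{\xx+1}{2^\nn}$; the cocycle relation then reads $\phi(\MOD{\xx+1}{2^\nn})=\phi(1)$ for every $\xx\in\AA_\nn$. Since $\xx\mapsto\MOD{\xx+1}{2^\nn}$ is a cyclic permutation of $\AA_\nn=\{1\wdots 2^\nn\}$, its values run over all of~$\AA_\nn$, whence $\phi(\zz)=\phi(1)$ for every~$\zz$ and $\phi$ is constant. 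A variant that makes the rigidity even more transparent is to take $\xx=2^\nn{-}1$ and use~\eqref{E:LastRows}, according to which $(2^\nn{-}1)\OP\nn\yy=2^\nn$ for all~$\yy$; the cocycle relation then collapses to $\phi(2^\nn)=\phi(\yy)$ at once.

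Finally I would note that this description is uniform in~$\nn$: it covers the degenerate case $\nn=0$, where $\AA_0=\{1\}$ and the shift argument applies trivially (whereas the second variant must be avoided, since $2^0{-}1\notin\AA_0$), and it identifies the generator in the statement, because the equality $\ZZ_\Rack^1=\HH_\Rack^1$ here means the cohomology class of the constant map is the map itself. I do not expect any genuine obstacle in this argument: the only decision is which specialization of the cocycle relation to use, and either the surjectivity of the shift $\xx\mapsto\MOD{\xx+1}{2^\nn}$ or the presence of the constant row $2^\nn{-}1$ forces constancy immediately, with no input beyond~\eqref{E:Laver1}, \eqref{E:LastRows}, and the vanishing~\eqref{E:Der1}.
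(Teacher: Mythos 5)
Your proposal is correct and follows essentially the same route as the paper: specialize the cocycle relation $\phi(\xx \op \yy) = \phi(\yy)$ to $\yy = 1$, use that $\xx \OP\nn 1$ ranges over all of~$\AA_\nn$ (by~\eqref{E:Laver1}) to force constancy, and conclude via~\eqref{E:Der1} that $\BB_\Rack^1(\AA_\nn; \ZZZZ) = 0$, so $\HH_\Rack^1 \cong \ZZ_\Rack^1 \cong \ZZZZ$. Your alternative variant using the constant row of~$2^\nn{-}1$ from~\eqref{E:LastRows}, with the correct caveat that it fails for $\nn = 0$, is a harmless addition but not needed.
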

 
\begin{proof}
According to~\eqref{E:Der2}, a $\ZZZZ$-valued $1$-cocycle on~$\AA_\nn$ is a map $\phi : \AA_\nn \to \ZZZZ$ that satisfies $\phi(\xx \op \yy) = \phi(\yy)$ for all~$\xx, \yy$, hence in particular $\phi(\xx \op 1) = \phi(1)$ for every~$\xx$. As $\xx \op 1$ ranges from~$1$ to~$2^\nn$ when $\xx$ varies, we deduce that $\phi$ must be constant. On the other hand, any constant map $\phi$ satisfies $\phi(\xx \op \yy) = \phi(\yy)$  and is thus a cocycle. Hence $\ZZ_\Rack^1(\AA_\nn; \ZZZZ)$ is generated by the constant map with value~$1$, and it is isomorphic to~$\ZZZZ$. 
 Further, due to~\eqref{E:Der1}, one has $\BB_\Rack^1(\AA_\nn; \ZZZZ) \cong 0$, so $\HH_\Rack^1(\AA_\nn; \ZZZZ) \cong \ZZ_\Rack^1(\AA_\nn; \ZZZZ) \cong \ZZZZ$.
\end{proof}

In fact, the argument above can be used for showing that the first rack homology and cohomology groups of any  selfdistributive structure that is \emph{monogenerated} (that is, generated by a single element) are $1$-dimensional.

 For the rest of this section, we consider degree~$2$ rack cohomology and, more specifically,  $\ZZZZ$-valued  $2$-cocycles. By definition, a $\ZZZZ$-valued $2$-cocycle for~$\AA_\nn$ is a $\ZZZZ$-linear map from~$\ZZZZ\AA_\nn \times \AA_\nn$ to~$\ZZZZ$, and it is fully determined by its values on~$\AA_\nn \times \AA_\nn$. Thus, we are looking for maps $\phi:\{1 \wdots 2^\nn\}\times\{1 \wdots 2^\nn\} \rightarrow \ZZZZ$ satisfying
\begin{equation}
\label{E:Cocycle}
\phi(\xx \op \yy, \xx \op \zz) + \phi(\xx, \zz) = \phi(\xx, \yy \op \zz) + \phi(\yy, \zz).
\end{equation}
 As in the case of the operation~$\OP\nn$ itself, it is natural to represent a $2$-cocycle~$\phi$ for~$\AA_\nn$ as a square table based on~$\{1 \wdots 2^\nn\}$, with the $(\xx,\yy)$-entry containing the integer~$\phi(\xx,\yy)$. Therefore, we can speak of the rows and columns of a $2$-cocycle. Further, we shall omit the subscripts~$\Rack$ and the coefficients~$\ZZZZ$ for brevity, writing for instance $\ZZ^2(\AA_\nn)$ instead of $\ZZ_\Rack^2(\AA_\nn;\ZZZZ)$.

 We  start with an easy obervation: since every $2$-coboundary is a $2$-cocycle, we can describe a number of elements in~$\ZZ^2(\AA_\nn)$ using the definition~\eqref{E:Der2} of~$\der^2$.
 
\begin{lemm}
\label{L:Delta}
For $\pp, \qq$ in~$\AA_\nn$, let $\delta_{\pp, \qq}$ be~$1$ for~$\pp = \qq$ and be~$0$ for~$\pp \neq \qq$. For $1 \le \qq \le 2^\nn$, define $\phi_{\qq, \nn}$ by 
\begin{equation}
\label{E:phi}
\phi_{\qq, \nn}(\xx, \yy) = \delta_{\yy, \qq} - \delta_{\xx \OP\nn \yy, \qq}.
\end{equation}
Then, for every~$\qq$, the map~$\phi_{\qq, \nn}$ defines a $2$-coboundary on~$\AA_\nn$.
\end{lemm}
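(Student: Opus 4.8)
The plan is to produce $\phi_{\qq,\nn}$ directly as an explicit image under the coboundary operator, which is exactly what is needed since, by the very formula \eqref{E:Der2}, a $2$-coboundary on~$\AA_\nn$ is precisely a map of the shape $(\xx,\yy)\mapsto f(\xx \OP\nn \yy) - f(\yy)$ for some $1$-cochain $f:\AA_\nn \to \ZZZZ$. So the entire task reduces to guessing the correct $f$ and checking one identity. The natural candidate is the (signed) characteristic function of the singleton~$\{\qq\}$: I would set $f(\xx) = -\delta_{\xx, \qq}$, where $\delta$ is the Kronecker symbol introduced in the statement.

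With this choice, a one-line substitution into \eqref{E:Der2} gives
$$\der_\Rack^2 f\,(\xx, \yy) = f(\xx \OP\nn \yy) - f(\yy) = -\delta_{\xx \OP\nn \yy, \qq} + \delta_{\yy, \qq} = \delta_{\yy, \qq} - \delta_{\xx \OP\nn \yy, \qq},$$
which is exactly the right-hand side of the defining formula~\eqref{E:phi}. Hence $\phi_{\qq, \nn} = \der_\Rack^2 f$ lies in the image of~$\der_\Rack^2$, i.e.\ it is a $2$-coboundary, and since this holds for every~$\qq$ in~$\{1 \wdots 2^\nn\}$ the statement follows. I would write the identity out for arbitrary $\xx, \yy$ and simply remark that it is valid entry by entry.

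There is essentially no obstacle here: the lemma is a direct unwinding of the definition of~$\der_\Rack^2$, and in fact uses no property of Laver tables whatsoever, so the same computation would apply to any magma. The single point demanding a moment's care is the sign convention in~\eqref{E:Der2}: taking $f = -\delta_{\cdot, \qq}$ (rather than $+\delta_{\cdot, \qq}$) is what reproduces the displayed formula with the stated signs. Had I chosen the opposite sign I would have obtained $-\phi_{\qq, \nn}$ instead, which is of course still a coboundary because~$\BB^2(\AA_\nn)$ is a subgroup; but matching~\eqref{E:phi} on the nose is cleaner and is what I would present.
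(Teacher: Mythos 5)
Your proof is correct and is essentially identical to the paper's: both take the $1$-cochain $\theta(\xx) = -\delta_{\xx,\qq}$ and observe that \eqref{E:Der2} yields $\der_\Rack^2\theta = \phi_{\qq,\nn}$ on the nose. Your additional remarks on the sign convention and on the fact that no property of Laver tables is used are accurate but not needed.
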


\begin{proof}
Let $\theta$ be the $1$-cochain on~$\AA_\nn$ defined by~$\theta(\xx) = -\delta_{\xx, \qq}$. Then \eqref{E:Der2} shows that $\phi_{\qq, \nn}$ is equal to~$\der^2\theta$. Hence $\phi_{\qq, \nn}$ belongs to~$\BB^2(\AA_\nn)$. 
\end{proof}

The table of the coboundary~$\phi_{\qq, \nn}$  is easily described in terms of the occurrences of the value~$\qq$ in the table of~$\AA_\nn$. Namely, it consists of zeroes everywhere, except for ones in the $\qq$th column at the positions where the value is not~$\qq$ in~$\AA_\nn$, and minus ones in the other columns at the positions where the value is~$\qq$, see Table~\ref{T:ExamplesPhi}.

\begin{table}[htb]
\small\small
\begin{tabular}{c|c}
\VR(0,1.5)\hidewidth{$\phi_{1,3}$}&\1\2\3\4\5\6\7\8\\
\hline
\VR(3,0)$1$&\1\0\0\0\0\0\0\0\\
$2$&\1\0\0\0\0\0\0\0\\
$3$&\1\0\0\0\0\0\0\0\\
$4$&\1\0\0\0\0\0\0\0\\
$5$&\1\0\0\0\0\0\0\0\\
$6$&\1\0\0\0\0\0\0\0\\
$7$&\1\0\0\0\0\0\0\0\\
$8$&\0\0\0\0\0\0\0\0
\end{tabular}
\qquad
\begin{tabular}{c|c}
\VR(0,1.5)\hidewidth{$\phi_{4,3}$}&\1\2\3\4\5\6\7\8\\
\hline
\VR(3,0)$1$&\0\mo\0\1\0\mo\0\0\\
$2$&\0\mo\0\1\0\mo\0\0\\
$3$&\mo\0\mo\1\mo\0\mo\0\\
$4$&\0\0\0\1\0\0\0\0\\
$5$&\0\0\0\1\0\0\0\0\\
$6$&\0\0\0\1\0\0\0\0\\
$7$&\0\0\0\1\0\0\0\0\\
$8$&\0\0\0\0\0\0\0\0
\end{tabular}
\qquad
\begin{tabular}{c|c}
\VR(0,1.5)\hidewidth{$\phi_{7,3}$}&\1\2\3\4\5\6\7\8\\
\hline
\VR(3,0)$1$&\0\0\0\0\0\0\1\0\\
$2$&\0\0\mo\0\0\0\0\0\\
$3$&\0\0\0\0\0\0\1\0\\
$4$&\0\0\mo\0\0\0\0\0\\
$5$&\0\0\0\0\0\0\1\0\\
$6$&\mo\0\mo\0\mo\0\0\0\\
$7$&\0\0\0\0\0\0\1\0\\
$8$&\0\0\0\0\0\0\0\0
\end{tabular}

\VR(2,0)
\caption{\sf\small Three $2$-cocycles on~$\AA_3$, here $\phi_{1,3}$, $\phi_{4,3}$, and $\phi_{7,3}$: in each case, the values are $0$ (replaced with a dot for readability), $1$, or $-1$, depending on the column and the corresponding value in the considered Laver table.}
\label{T:ExamplesPhi}
\end{table}

We shall now establish that the first $2^\nn-1$ coboundaries of Lemma~\ref{L:Delta} make a basis of~$\BB^2(\AA_\nn)$ and that, when completed with a constant cocycle, they make a basis of~$\ZZ^2(\AA_\nn)$.

\begin{prop}
\label{P:Basis1}
 For every~$\nn$, the group~$\ZZ^2(\AA_\nn)$ is a free abelian group of rank~$2^\nn$, with a basis consisting of the coboundaries~$\phi_{\pp, \nn}$ with $\pp < 2^\nn$ and the constant $2$-cocycle~$\const_\nn$ with value~$1$. Every $2$-cocycle~$\phi$ for~$\AA_\nn$ admits the decomposition 
\begin{equation}
\label{E:Decomp}
\phi = \VR(5,3)\smash{\sum_{\qq=1}^{2^\nn{-}1}} (\phi(\ppn, \qq) -\phi(\ppn, 2^\nn))\phi_{\qq, \nn} + \phi(\ppn, 2^\nn) \const_\nn.
\end{equation}
Moreover, the subfamily $\{\phi_{1, \nn} \wdots \phi_{2^\nn{-}1, \nn}\}$ is a basis of $\BB^2(\AA_\nn)$, and $\HH^2(\AA_\nn) \cong \ZZZZ$ holds.
\end{prop}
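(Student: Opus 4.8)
The plan is to exploit the special row indexed by~$\ppn$, which by~\eqref{E:LastRows} consists entirely of the value~$2^\nn$, and to show that this single row (together with one constant) reconstructs an arbitrary $2$-cocycle. Concretely, I would substitute $\xx = \ppn$ into the cocycle identity~\eqref{E:Cocycle}. Since $\ppn \OP\nn \yy = \ppn \OP\nn \zz = 2^\nn$ for all $\yy, \zz$, the term $\phi(\ppn \OP\nn \yy, \ppn \OP\nn \zz)$ collapses to the constant $\phi(2^\nn, 2^\nn)$, and the identity rearranges into the reconstruction formula
\begin{equation*}
\phi(\yy, \zz) = \phi(2^\nn, 2^\nn) + \phi(\ppn, \zz) - \phi(\ppn, \yy \OP\nn \zz),
\end{equation*}
valid for all $\yy, \zz$. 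Taking $\yy = \ppn$ and again using $\ppn \OP\nn \zz = 2^\nn$ identifies the constant as $\phi(2^\nn, 2^\nn) = \phi(\ppn, 2^\nn)$. This reconstruction formula is the crux of the whole argument; everything after it is bookkeeping.

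To recover the stated decomposition~\eqref{E:Decomp}, I would set $c = \phi(\ppn, 2^\nn)$ and $\Psi(\qq) = \phi(\ppn, \qq) - c$, noting $\Psi(2^\nn) = 0$, and expand the right-hand side of~\eqref{E:Decomp} using $\phi_{\qq, \nn}(\yy, \zz) = \delta_{\zz, \qq} - \delta_{\yy \OP\nn \zz, \qq}$. Because $\Psi(2^\nn) = 0$, the sum may be extended over all $\qq \le 2^\nn$, whereupon the two delta-sums collapse to $\Psi(\zz) - \Psi(\yy \OP\nn \zz)$; adding back~$c$ reproduces the reconstruction formula exactly. This shows that $\{\phi_{\qq, \nn} : \qq < 2^\nn\} \cup \{\const_\nn\}$ spans $\ZZ^2(\AA_\nn)$. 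The one place to be careful is the index and sign bookkeeping in this matching, since an off-by-one in the range of~$\qq$ or a sign slip in~$\phi_{\qq, \nn}$ would break the telescoping.

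For linear independence I would test a relation $\sum_{\qq < 2^\nn} a_\qq \phi_{\qq, \nn} + b\,\const_\nn = 0$ on two families of arguments. Evaluating on the bottom row $\xx = 2^\nn$ kills every coboundary, since $2^\nn \OP\nn \yy = \yy$ gives $\phi_{\qq, \nn}(2^\nn, \yy) = 0$, leaving $b = 0$. Evaluating on the row $\xx = \ppn$, where $\phi_{\qq, \nn}(\ppn, \qq_0) = \delta_{\qq_0, \qq}$ for $\qq < 2^\nn$, and reading off column~$\qq_0$ forces $a_{\qq_0} = 0$. Hence the spanning family is a basis and $\ZZ^2(\AA_\nn)$ is free of rank~$2^\nn$.

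Finally I would determine $\BB^2(\AA_\nn)$ and $\HH^2(\AA_\nn)$. The delta $1$-cochains $\theta_\qq(\xx) = \delta_{\xx, \qq}$ form a basis of $\CC^1(\AA_\nn)$ and satisfy $\der^2 \theta_\qq = -\phi_{\qq, \nn}$ by~\eqref{E:Der2}, so $\BB^2(\AA_\nn)$ is spanned by the $\phi_{\qq, \nn}$ with $\qq \le 2^\nn$. The identity $\sum_{\qq=1}^{2^\nn} \phi_{\qq, \nn} = 0$, which is immediate from $\sum_\qq \delta_{\yy, \qq} = \sum_\qq \delta_{\yy \OP\nn \zz, \qq} = 1$, lets me discard $\phi_{2^\nn, \nn}$; together with the independence just proved, this shows $\{\phi_{\qq, \nn} : \qq < 2^\nn\}$ is a basis of $\BB^2(\AA_\nn)$. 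As this basis is completed to the basis of $\ZZ^2(\AA_\nn)$ by the single vector $\const_\nn$, the quotient $\HH^2(\AA_\nn)$ is free of rank~$1$, generated by the class of $\const_\nn$. The degenerate case $\nn = 0$, where $\AA_0$ is a point and $\ppn$ is read modulo $2^\nn$, is checked directly and fits the pattern trivially.
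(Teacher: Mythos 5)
Your proof is correct and follows essentially the same route as the paper: the reconstruction formula you obtain by setting $\xx = \ppn$ in~\eqref{E:Cocycle} is exactly the computation behind the paper's Lemmas~\ref{L:LastColumn2} and~\ref{L:PenultRow2}, and the decomposition, the independence test, and the identification of $\HH^2(\AA_\nn)$ all match the paper's argument up to bookkeeping. The only (minor) divergence is in showing that the $\phi_{\qq,\nn}$ with $\qq<2^\nn$ span $\BB^2(\AA_\nn)$: the paper notes that every coboundary vanishes at $(\ppn,2^\nn)$, so the constant term in~\eqref{E:Decomp} drops out, whereas you push the delta-cochain basis of~$\CC^1$ through~$\der^2$ and discard $\phi_{2^\nn,\nn}$ via the relation $\sum_{\qq=1}^{2^\nn}\phi_{\qq,\nn}=0$; both are immediate and equally valid.
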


The proof, which is not difficult, relies on two auxiliary results exploiting the (very) specific properties of Laver tables. 
 
\begin{lemm}
\label{L:LastColumn2}
Assume that $\phi$ is a $2$-cocycle for~$\AA_\nn$. Then, for~$1 \le \xx \le 2^\nn$, we have $\phi(\xx, 2^\nn)= \phi(\ppn, 2^\nn)$.
\end{lemm}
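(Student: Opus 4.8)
The plan is to read off a strong constraint on the last column of~$\phi$ by specialising the cocycle identity~\eqref{E:Cocycle} at $\zz = 2^\nn$. The key structural input is~\eqref{E:LastColumn}, which makes the rightmost column of every Laver table constant: $\pp \OP\nn 2^\nn = 2^\nn$ for all~$\pp$. Consequently, setting $\zz = 2^\nn$ forces both the product $\xx \OP\nn \zz$ appearing in the first term and the inner product $\yy \OP\nn \zz$ appearing in the third term to collapse to~$2^\nn$, so that the cocycle relation degenerates dramatically.

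Carrying out this substitution, \eqref{E:Cocycle} becomes
\[
\phi(\xx \OP\nn \yy,\, 2^\nn) + \phi(\xx, 2^\nn) = \phi(\xx, 2^\nn) + \phi(\yy, 2^\nn),
\]
and after cancelling $\phi(\xx, 2^\nn)$ from both sides I obtain the clean relation
\[
\phi(\xx \OP\nn \yy,\, 2^\nn) = \phi(\yy, 2^\nn) \qquad\text{for all } \xx, \yy.
\]
In words, the function $\xx \mapsto \phi(\xx, 2^\nn)$ is invariant under every left translation $\yy \mapsto \xx \OP\nn \yy$.

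The final step is to upgrade this invariance to genuine constancy, and here the decisive input is the initial condition~\eqref{E:Laver1}. Taking $\yy = 1$ in the relation above gives $\phi(\xx \OP\nn 1,\, 2^\nn) = \phi(1, 2^\nn)$ for every~$\xx$; since $\xx \OP\nn 1 = \MOD{\xx + 1}{2^\nn}$ runs through all of $\{1 \wdots 2^\nn\}$ as $\xx$ does, the value $\phi(\cdot, 2^\nn)$ must be the same at every argument. In particular it equals $\phi(\ppn, 2^\nn)$, which is exactly the claim. There is no genuine obstacle once one hits on the substitution $\zz = 2^\nn$: the whole argument rests only on the two elementary facts that the last column of~$\AA_\nn$ is constant and that the first column realises a cyclic permutation of the index set, so I expect the hardest part to be merely spotting that specialisation, with no delicate computation afterwards.
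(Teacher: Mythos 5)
Your proof is correct, but it takes a genuinely different route from the paper's. The paper also specialises \eqref{E:Cocycle} at $\zz = 2^\nn$, but it simultaneously freezes the first argument at the special element $\ppn$, applying the identity to the triple $(\ppn, \xx, 2^\nn)$: since $\ppn \OP\nn \xx = 2^\nn$ holds for every~$\xx$ by \eqref{E:LastRows} and $\xx \OP\nn 2^\nn = 2^\nn$ by \eqref{E:LastColumn}, the identity collapses in a single step to $\phi(2^\nn, 2^\nn) = \phi(\xx, 2^\nn)$, with nothing left to do. You instead keep $\xx$ and $\yy$ generic, use only \eqref{E:LastColumn}, and obtain the invariance $\phi(\xx \OP\nn \yy, 2^\nn) = \phi(\yy, 2^\nn)$, which you then upgrade to constancy via the cyclic-shift property \eqref{E:Laver1} of the first column. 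So the structural inputs differ: the paper exploits the constant row of~$\ppn$ (and never needs \eqref{E:Laver1}), while you exploit the bijectivity of $\xx \mapsto \xx \OP\nn 1$ (and never need \eqref{E:LastRows}). Your version has a conceptual reading that the paper's one-shot computation hides: your intermediate relation says precisely that the map $\uu \mapsto \phi(\uu, 2^\nn)$ satisfies the $1$-cocycle condition visible in \eqref{E:Der2}, so your second step is word-for-word the argument proving Proposition~\ref{P:H1}; in other words, the last column of any $2$-cocycle is a $1$-cocycle, hence constant. This makes your argument portable to any left-selfdistributive structure having an element~$e$ with $\xx \op e = e$ for all~$\xx$ together with a surjective right translation, whereas the paper's argument instead requires an element whose whole row is constant equal to~$e$; for Laver tables both packages of facts are available, and the two proofs are equally short.
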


\begin{proof}
Write $\pn$ for~$\ppn$  and $\op$ for $\OP\nn$. Applying~\eqref{E:Cocycle} to $(\pn, \xx, 2^\nn)$  yields
\begin{equation}
\label{E:Cocycle13'}
\phi(\pn \op \xx, \pn \op 2^\nn) + \phi(\pn, 2^\nn) = \phi(\pn, \xx \op 2^\nn) + \phi(\xx, 2^\nn).
\end{equation}
According to~\eqref{E:LastColumn} and~\eqref{E:LastRows}, for every~$\xx$ we have $\pn\op\xx = 2^\nn$ and $\xx \op 2^\nn = 2^\nn$, so \eqref{E:Cocycle13'} reduces to
$\phi(2^\nn, 2^\nn) = \phi(\xx,2^\nn)$. Thus $\phi(\xx,2^\nn)$ does not depend on~$\xx$.  
\end{proof}

In other words, the last column of every $2$-cocycle for~$\AA_\nn$ is constant. The second auxiliary result states that a $2$-cocycle with a trivial penultimate row must be trivial.

\begin{lemm}
\label{L:PenultRow2}
Assume that $\phi$ is a $2$-cocycle for~$\AA_\nn$ and $\phi(\ppn, \yy) = 0$ holds for every~$\yy$. Then $\phi$ is the zero cocycle.
\end{lemm}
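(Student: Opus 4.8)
The plan is to substitute $\xx = \ppn$ into the cocycle identity~\eqref{E:Cocycle} and to exploit the degeneracy of the penultimate row of the table. Writing $\pn$ for~$\ppn$ and $\op$ for~$\OP\nn$, I would first recall from~\eqref{E:LastRows} that $\pn \op \yy = 2^\nn$ holds for \emph{every}~$\yy$; that is, left translation by~$\pn$ sends all of~$\AA_\nn$ to the single point~$2^\nn$. This is precisely what makes the choice $\xx = \pn$ so powerful: both mixed arguments $\pn \op \yy$ and $\pn \op \zz$ appearing in~\eqref{E:Cocycle} collapse to~$2^\nn$.

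Carrying this out, \eqref{E:Cocycle} applied to $(\pn, \yy, \zz)$ reads
\[
\phi(\pn \op \yy, \pn \op \zz) + \phi(\pn, \zz) = \phi(\pn, \yy \op \zz) + \phi(\yy, \zz).
\]
The first term becomes $\phi(2^\nn, 2^\nn)$, independently of~$\yy$ and~$\zz$, while the hypothesis $\phi(\pn, \cdot) \equiv 0$ annihilates both $\phi(\pn, \zz)$ and $\phi(\pn, \yy \op \zz)$. Hence the identity reduces to $\phi(\yy, \zz) = \phi(2^\nn, 2^\nn)$ for all $\yy, \zz$, so $\phi$ is constant. It then suffices to appeal to the hypothesis once more: being constant, $\phi$ equals $\phi(\pn, 1) = 0$ everywhere, which is the desired conclusion. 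For $\nn = 0$ the table is a single point and the statement is vacuous, so one may freely assume $\nn \ge 1$, ensuring that $\pn$ is a genuine element of~$\AA_\nn$.

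I do not expect a serious obstacle: the entire argument is driven by the collapsing identity~\eqref{E:LastRows}, and the only subtlety is to remember to use the vanishing hypothesis twice---first to discard the two penultimate-row terms, and then to identify the surviving constant $\phi(2^\nn, 2^\nn)$ with~$0$.
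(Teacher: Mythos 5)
Your proof is correct and follows essentially the same route as the paper: both apply the cocycle identity~\eqref{E:Cocycle} with first argument $\ppn$, then use the collapsing values~\eqref{E:LastRows} together with the vanishing hypothesis to reduce the identity to $\phi(\yy,\zz) = \phi(2^\nn, 2^\nn)$ for all $\yy, \zz$. The one small difference is at the end: the paper identifies the constant $\phi(2^\nn,2^\nn)$ with $\phi(\ppn, 2^\nn) = 0$ by invoking Lemma~\ref{L:LastColumn2}, whereas you simply specialize the derived constancy to the pair $(\ppn, 1)$ and apply the hypothesis once more; your ending is self-contained and bypasses that auxiliary lemma, a slight but genuine simplification.
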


\begin{proof}
 Write $\pn$ for~$\ppn$ again. Applying~\eqref{E:Cocycle} to~$(\pn, \xx, \yy)$ yields
\begin{equation*}
\phi(\pn \op \xx, \pn \op \yy) + \phi(\pn, \yy) = \phi(\pn, \xx \op \yy) + \phi(\xx, \yy),
\end{equation*}
hence $\phi(2^\nn, 2^\nn) = \phi(\xx, \yy)$, owing to~\eqref{E:LastRows} and the assumption on~$\phi$. Lemma~\ref{L:LastColumn2} gives $\phi(2^\nn, 2^\nn) = \phi(\pn,2^\nn)$, which is zero, again by  the assumption on~$\phi$. One concludes that $\phi$ is zero on~$\AA_\nn \times \AA_\nn$.
\end{proof}

 We can now complete the argument. 

\begin{proof}[Proof of Proposition~\ref{P:Basis1}]
 As usual write $\pn$ for~$\ppn$. From the definition~\eqref{E:phi} and from the values~\eqref{E:LastRows} in the penultimate row of a Laver table, we find for $\qq <2^\nn$
\begin{equation*}
\phi_{\qq, \nn}(\pn, \yy) =  \delta_{\yy, \qq} -  \delta_{\pn \OP\nn \yy, \qq} =  \delta_{\yy, \qq} -  \delta_{2^\nn, \qq}  =  \delta_{\yy, \qq},
\end{equation*}
that is, $\phi_{\qq, \nn}$ is a $2$-cocycle whose penultimate row contains~$1$ in the $\qq$th column and zeroes everywhere else. This cocycle system can be completed with the cocycle $\const'_\nn$ defined by 
\begin{equation}
\label{E:cbar}
\const'_\nn=\cc_\nn - {\sum\nolimits_{\qq=1}^{\pn}}\phi_{\qq, \nn},
\end{equation}
whose penultimate row contains~$1$ in the last column and zeroes everywhere else. These calculations show that an arbitrary $2$-cocycle $\phi$ for~$\AA_\nn$ coincides on the penultimate row with the cocycle $\VR(4,2)\smash{\sum_{\qq=1}^{\pn}} \phi(\pn, \qq)\phi_{\qq, \nn} + \phi(\pn, 2^\nn) \const'_\nn,$ which is precisely the right side of~\eqref{E:Decomp}. By Lemma~\ref{L:PenultRow2}, these cocycles must coincide and, therefore, Relation~\eqref{E:Decomp} holds. Consequently, the family $\{\phi_{1, \nn} \wdots \phi_{2^\nn-1, \nn}, \const'_\nn\}$, and hence $\{\phi_{1, \nn} \wdots \phi_{2^\nn-1, \nn}, \const_\nn\}$, gene\-rates $\ZZ^2(\AA_\nn)$. Let us now show that this family is free. Suppose that $\VR(4,2)\sum_{\qq=1}^{\pn} \lambda_\qq \phi_{\qq, \nn} + \lambda_{2^\nn} \const_\nn$ is a zero $2$-cocycle for some integer coefficients~$\lambda_i$. Evaluating at~$(\pn,2^\nn)$ and using the calculations above, one  obtains $\lambda_{2^\nn}=0$. Further, evaluating at~$(\pn,\qq)$ with~$\qq <2^\nn$, one  obtains $\lambda_{\qq}=0$ for all~$\qq$. Thus $\ZZ^2(\AA_\nn)$ is a free $\ZZZZ$-module based on $\{\phi_{1, \nn} \wdots \phi_{2^\nn-1, \nn}, \const_\nn\}$.

It remains to prove the assertion for~$\BB^2(\AA_\nn)$. According to Lemma~\ref{L:Delta}, the maps $\phi_{1, \nn} \wdots \phi_{2^\nn-1, \nn}$ lie in~$\BB^2(\AA_\nn)$, and, as argued above, they form a free family. Let us show that they generate the whole~$\BB^2(\AA_\nn)$. Due to the definition~\eqref{E:Der2} and the property~\eqref{E:LastColumn} of Laver tables, every $2$-coboundary~$\der^2\theta$ on~$\AA_\nn$ satisfies 
$$\der^2\theta(\pn,2^\nn) = \theta(\pn \op 2^\nn) - \theta(2^\nn) = \theta(2^\nn) - \theta(2^\nn) = 0,$$
hence the last term in the decomposition~\eqref{E:Decomp} for~$\der^2\theta$ is zero, and it belongs to the submodule of~$\ZZ^2(\AA_\nn)$ generated by $\{\phi_{1, \nn} \wdots \phi_{2^\nn-1, \nn}\}$.\end{proof}

Once a basis of $\ZZ^2(\AA_\nn)$ is known, it becomes extremely easy to extract common properties of its elements and thus establish general properties of the cocycles. Here are some examples.

\begin{prop}
Assume that $\phi$ is a $2$-cocycle for~$\AA_\nn$. Put $\uu = \phi(1, 2^{\nn-1})$ and $\vv = \phi(2^\nn, 2^\nn)$.

\ITEM1 For all~$\qq \le 2^\nn$, we have $\phi(2^\nn,\qq) = \vv$, that is, the last row of~$\phi$ is constant.

\ITEM2 For all~$\pp < 2^\nn$, we have $\phi(\pp, 2^{\nn-1}) = \uu$, that is, the column of~$2^{\nn-1}$ is constant, except possibly its last element. For every $\qq > 2^{\nn-1}$, we have $\phi(2^{\nn-1}, \qq) =\nobreak \vv$, that is, the second half of the row of $2^{\nn-1}$ is constant.

\ITEM3 For every~$\qq < 2^{\nn-1}$, we have $\phi(\pp, \qq) = \phi(\pp, \qq + 2^{\nn-1})$ for every~$\pp$, that is, the columns of $\qq$ and $\qq + 2^{\nn-1}$ coincide, if and only if $\phi(2^{\nn-1}, \qq) = \vv$ holds.
\end{prop}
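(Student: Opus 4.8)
The plan is to read off all three statements from the explicit basis of~$\ZZ^2(\AA_\nn)$ furnished by Proposition~\ref{P:Basis1}. Writing $\pn$ for~$\ppn$, decomposition~\eqref{E:Decomp} expresses an arbitrary $2$-cocycle as $\phi=\sum_{\qq=1}^{2^\nn-1}\cc_\qq\,\phi_{\qq,\nn}+\vv\,\const_\nn$ for suitable integers~$\cc_\qq$, the coefficient of~$\const_\nn$ being $\phi(\pn,2^\nn)$, which equals $\phi(2^\nn,2^\nn)=\vv$ by Lemma~\ref{L:LastColumn2}. Since all three assertions are linear in~$\phi$ and are satisfied by the constant cocycle~$\const_\nn$, it suffices to evaluate the defining formula~\eqref{E:phi} for the basis coboundaries~$\phi_{\qq,\nn}$ at the relevant entries; this in turn needs only a few structural facts about~$\AA_\nn$.

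First I would assemble those facts. By~\eqref{E:LeftMultiples}, for $\pp<2^\nn$ every value in the row of~$\pp$ exceeds~$\pp$, so the number of distinct values, namely $\per_\nn(\pp)$, is a power of~$2$ at most $2^\nn-\pp<2^\nn$, hence divides~$2^{\nn-1}$. Therefore $2^{\nn-1}$ is a multiple of the period, which yields at once $\pp\OP\nn 2^{\nn-1}=2^\nn$ (using $\pp\OP\nn\per_\nn(\pp)=2^\nn$ and~\eqref{E:LastColumn}) and $\pp\OP\nn(\qq+2^{\nn-1})=\pp\OP\nn\qq$ whenever $\qq\le 2^{\nn-1}$. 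I also need the row of~$2^{\nn-1}$ itself: since $\proj_\nn(2^{\nn-1})=2^{\nn-1}$ is the top element of~$\AA_{\nn-1}$, whose row is the identity by~\eqref{E:LastRows}, Proposition~\ref{P:Proj}\ITEM2 combined with~\eqref{E:LeftMultiples} forces the threshold of~$2^{\nn-1}$ to vanish; hence $\per_\nn(2^{\nn-1})=2^{\nn-1}$ and $2^{\nn-1}\OP\nn\qq=\qq+2^{\nn-1}$ for $\qq\le 2^{\nn-1}$, while periodicity gives $2^{\nn-1}\OP\nn\qq=\qq$ for $\qq>2^{\nn-1}$.

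Points~\ITEM1 and~\ITEM2 then collapse to one-line evaluations. For~\ITEM1, \eqref{E:LastRows} gives $2^\nn\OP\nn\qq=\qq$, so~\eqref{E:phi} yields $\phi_{\qq',\nn}(2^\nn,\qq)=\delta_{\qq,\qq'}-\delta_{\qq,\qq'}=0$; only the $\const_\nn$-term survives and the last row is constantly~$\vv$. For the first half of~\ITEM2, $\pp\OP\nn 2^{\nn-1}=2^\nn$ (with $\pp<2^\nn$) kills the second term of~\eqref{E:phi}, so $\phi_{\qq,\nn}(\pp,2^{\nn-1})=\delta_{2^{\nn-1},\qq}$ is independent of~$\pp$; thus the column of~$2^{\nn-1}$ is constant on its first $2^\nn-1$ entries, equal to $\phi(1,2^{\nn-1})=\uu$. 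For the second half, $2^{\nn-1}\OP\nn\qq=\qq$ for $\qq>2^{\nn-1}$ again collapses~\eqref{E:phi} to~$0$, leaving only the $\const_\nn$-term, so $\phi(2^{\nn-1},\qq)=\vv$.

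Point~\ITEM3 is the only one with genuine content, and I would deduce it from~\ITEM2. Fix $\qq<2^{\nn-1}$ and set $\DD(\pp):=\phi(\pp,\qq)-\phi(\pp,\qq+2^{\nn-1})$. Since $\pp\OP\nn(\qq+2^{\nn-1})=\pp\OP\nn\qq$ for $\pp<2^\nn$, the two ``second terms'' in~\eqref{E:phi} cancel in every basis contribution, leaving $\phi_{\qq',\nn}(\pp,\qq)-\phi_{\qq',\nn}(\pp,\qq+2^{\nn-1})=\delta_{\qq,\qq'}-\delta_{\qq+2^{\nn-1},\qq'}$, which does not depend on~$\pp$; hence $\DD(\pp)$ equals the single constant $\cc_\qq-\cc_{\qq+2^{\nn-1}}$ for all $\pp<2^\nn$, whereas $\DD(2^\nn)=0$ by~\ITEM1. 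The two columns therefore coincide, i.e.\ $\DD\equiv 0$, if and only if $\DD(2^{\nn-1})=0$; and as $\qq+2^{\nn-1}>2^{\nn-1}$, the second half of~\ITEM2 gives $\phi(2^{\nn-1},\qq+2^{\nn-1})=\vv$, so $\DD(2^{\nn-1})=\phi(2^{\nn-1},\qq)-\vv$, yielding the stated equivalence. I expect the main obstacle to be the bookkeeping of the Laver-table facts in the second paragraph---especially the ``threshold~$0$'' description of the row of~$2^{\nn-1}$ and the divisibility $\per_\nn(\pp)\mid 2^{\nn-1}$---rather than the cohomological argument; the one conceptual subtlety is noticing, in~\ITEM3, that the column difference is constant down every row but the last, so that testing it on the single row~$2^{\nn-1}$ already decides it.
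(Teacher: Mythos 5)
Your proof is correct, and for points \ITEM1 and \ITEM2 it is essentially the paper's own argument: there too, the properties are checked on the basis cocycles $\phi_{\qq,\nn}$ and $\const_\nn$ of Proposition~\ref{P:Basis1} and extended by linearity; you merely carry out the evaluations the paper leaves implicit, and your careful derivation of the background facts ($\per_\nn(\pp) \div 2^{\nn-1}$ for $\pp < 2^\nn$, $\pp \OP\nn 2^{\nn-1} = 2^\nn$, and the explicit row of~$2^{\nn-1}$) covers exactly what the paper uses without comment. For \ITEM3, however, you take a genuinely different route. The paper observes that the biconditional in \ITEM3 is not obviously preserved under linear combinations (``the preservation under linearity requires (slightly) more care'') and therefore switches to a direct verification: it applies the cocycle identity~\eqref{E:Cocycle} to the triple $(\xx, 2^{\nn-1}, \qq)$, obtaining
\begin{equation*}
\phi(\xx, \qq) + \phi(2^\nn, \xx \op \qq) = \phi(2^{\nn-1}, \qq) + \phi(\xx, \qq + 2^{\nn-1}),
\end{equation*}
and concludes using \ITEM1. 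You instead supply precisely the ``more care'' the paper sidesteps: by showing that the column difference $\DD(\pp) = \phi(\pp,\qq) - \phi(\pp, \qq+2^{\nn-1})$ equals the single constant $\cc_\qq - \cc_{\qq+2^{\nn-1}}$ for every $\pp < 2^\nn$ (the second terms of~\eqref{E:phi} cancelling by periodicity) while $\DD(2^\nn) = 0$, you reduce the ``for every $\pp$'' statement to a test at the one row~$2^{\nn-1}$, where \ITEM2 finishes the job. Your version stays entirely within the basis framework and exhibits the exact obstruction to the coincidence of the two columns, namely $\cc_\qq \neq \cc_{\qq+2^{\nn-1}}$ in the decomposition~\eqref{E:Decomp}; the paper's version is shorter, needing only the cocycle equation together with \ITEM1. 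Both arguments rest on the same structural facts about Laver tables, so neither is more economical in that respect.
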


\begin{proof}
All properties in~\ITEM1 and~\ITEM2 are satisfied by each of the cocycles~$\phi_{\ii, \nn}$ and $\const_\nn$, and they are preserved under linear combination. By Proposition~\ref{P:Basis1}, they are therefore satisfied by every cocycle.  The situation with~\ITEM3 is similar, but the preservation under linearity requires (slightly) more care. So we shall instead make a direct verification---similar verifications are of course possible in the cases of~\ITEM1 and~\ITEM2.  

Making $\yy = 2^{\nn-1}$ and $\zz = \qq$ in~\eqref{E:Cocycle} yields
\begin{equation}
\label{E:Cocycle17}
\phi(\xx, \qq) + \phi(\xx\op2^{\nn-1}, \xx\op\qq) = \phi(2^{\nn-1}, \qq) + \phi(\xx, 2^{\nn-1}\op\qq).
\end{equation}
Assume $\xx < 2^\nn$ and $\qq < 2^{\nn-1}$. Then we have $\xx\op2^{\nn-1} = 2^\nn$ and $2^{\nn-1}\op\qq = \qq + 2^{\nn-1}$, so \eqref{E:Cocycle17} reduces to
\begin{equation*}
\phi(\xx, \qq) + \phi(2^\nn, \xx\op\qq) = \phi(2^{\nn-1}, \qq) + \phi(\xx, \qq + 2^{\nn-1}).
\end{equation*}
 It follows that the  equality $\phi(\xx, \qq) = \phi(\xx, \qq + 2^{\nn-1})$  is equivalent to $\phi(2^{\nn-1}, \qq)  = \phi(2^\nn, \xx\op\qq)$, that is, according to~\ITEM1, to $\phi(2^{\nn-1}, \qq)  = \vv.$
In the remaining case $\xx = 2^\nn$, note that, again by~\ITEM1, both $\phi(\xx, \qq)$ and $\phi(\xx, \qq + 2^{\nn-1})$ are equal to $\vv$ and thus coincide. We conclude that  the  equality $\phi(2^{\nn-1}, \qq)  = \vv$  holds if and only if  $\phi(\xx, \qq) = \phi(\xx, \qq + 2^{\nn-1})$ holds for every~$\xx$.
\end{proof}

\section{A partial ordering on~$\AA_\nn$}
\label{S:Poset}

In order to subsequently continue our investigation of $2$-cocycles for Laver tables, we shall now describe a certain partial order connected with right-division in the structure~$\AA_\nn$. Although this order seems to have never been mentioned explicitly, most essential properties of right-division in~$\AA_\nn$  were  established (with a different phrasing) in a series of papers by~A.\,Dr\'apal, who intensively investigated Laver tables in the 1990's~\cite{DraHom, DraPer, DraSem, DraAlg, DraGro, DraLow}. Below we shall give new, self-contained proofs of these properties which hopefully are more accessible than the original ones, spread in several sources; at the same time, this will provide typical examples of the delicate inductive arguments relevant in the investigation of Laver tables. From these properties, we shall deduce the existence of the above-mentioned partial order, finishing the section with its brief study.

\begin{defi}
For $1 \le \qq \le 2^\nn$, we put $\Col_\nn(\qq) = \{\pp \OP\nn \qq \mid \pp = 1 \wdots 2^\nn\}$, and we write  $\qq \Before\nn \rr$ if $\Col_\nn(\qq)$ contains~$\rr$.
\end{defi} 

So $\Col_\nn(\qq)$ is the family of all values that appear in the $\qq$th column of the Laver table~$\AA_\nn$. By definition, the relation  $\qq \Before\nn \rr$  holds if and only if  $\rr$ appears in the $\qq$th column of~$\AA_\nn$, hence if and only if there exists~$\pp$ satisfying $\pp \OP\nn \qq = \rr$, that is, if $\rr$ is what can be called a \emph{left-multiple}  of~$\qq$ (and, equivalently, $\qq$ is a \emph{right-divisor} of~$\rr$)  in~$\AA_\nn$. We shall establish below

\begin{prop}
\label{P:Poset}
For every~$\nn$, the relation~$\Before\nn$ is a partial ordering on~$\{1 \wdots 2^\nn \}$.  For every~$\pp$, we have $1 \Before\nn \pp \Before\nn 2^\nn$, that is, $1$ is initial and $2^\nn$ is final for~$\Before\nn$.
\end{prop}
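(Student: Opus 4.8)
The plan is to check reflexivity, antisymmetry, and transitivity in turn, dealing with the latter two by induction on~$\nn$ along the projection~$\proj_\nn$. The easy points come first. Reflexivity $\qq \Before\nn \qq$ is immediate from $2^\nn \OP\nn \qq = \qq$ in~\eqref{E:LastRows}, which places~$\qq$ in its own column. For the extremal claims, $1 \Before\nn \pp$ holds because $\xx \OP\nn 1 = \MOD{\xx+1}{2^\nn}$ from~\eqref{E:Laver1} runs through all of~$\{1 \wdots 2^\nn\}$ as~$\xx$ varies, so the first column is the whole table; and $\pp \Before\nn 2^\nn$ holds because $(2^\nn{-}1) \OP\nn \pp = 2^\nn$ by~\eqref{E:LastRows}, so~$2^\nn$ occurs in every column.

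For transitivity I would first recast it as a closure property. Directly from the definition, $\qq \Before\nn \tt$ means $\tt \in \Col_\nn(\qq)$, while $\tt \in \Col_\nn(\rr)$ means $\tt = \bb \OP\nn \rr$ for some~$\bb$; hence, if $\qq \Before\nn \rr$ and $\rr \Before\nn \tt$, then~$\tt$ is the image under the left translation $\xx \mapsto \bb \OP\nn \xx$ of the element $\rr \in \Col_\nn(\qq)$. It thus suffices to prove that every column~$\Col_\nn(\qq)$ is stable under all left translations of~$\AA_\nn$. I would establish this stability by induction on~$\nn$: since~$\proj_\nn$ is a homomorphism (Proposition~\ref{P:Proj}\ITEM1), one has $\proj_\nn(\Col_\nn(\qq)) = \Col_{\nn-1}(\MOD\qq{2^{\nn-1}})$, which lets me lift stability from~$\AA_{\nn-1}$ to~$\AA_\nn$ by means of the row-by-row threshold/period description in Proposition~\ref{P:Proj}\ITEM2 together with the law~\eqref{E:LD}.

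Antisymmetry is, I expect, the genuinely hard point, and I would again argue by induction on~$\nn$ via~$\proj_\nn$. Suppose $\qq \Before\nn \rr$ and $\rr \Before\nn \qq$. Applying the homomorphism~$\proj_\nn$ turns these into $\MOD\qq{2^{\nn-1}} \Before{\nn-1} \MOD\rr{2^{\nn-1}}$ and its reverse, so the induction hypothesis forces $\qq \equiv \rr \pmod{2^{\nn-1}}$. Either $\qq = \rr$, and we are done, or $\{\qq, \rr\}$ is a single fibre $\{\qq, \qq + 2^{\nn-1}\}$ of~$\proj_\nn$, and everything reduces to excluding this same-fibre situation. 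The case $\qq = 2^{\nn-1}$, $\rr = 2^\nn$ is disposed of at once, since the last column of~$\AA_\nn$ is constant equal to~$2^\nn$ by~\eqref{E:LastColumn}. The remaining case $\rr = \qq + 2^{\nn-1}$ with $\qq < 2^{\nn-1}$ is the crux: writing $\qq = \pp \OP\nn \rr$ and examining~$\pp$ according to whether $\pp = 2^\nn$, $2^{\nn-1} \le \pp < 2^\nn$, or $\pp < 2^{\nn-1}$, one sees via~\eqref{E:LeftMultiples} and Proposition~\ref{P:Proj}\ITEM2 that only the last range can yield a value~$\le 2^{\nn-1}$, and that whether $\pp \OP\nn \rr$ lands on the unshifted value~$\pp \OP{\nn-1} \qq$ or on its shift by~$2^{\nn-1}$ is governed by the threshold~$\thres_\nn(\pp)$ and the period~$\per_\nn(\pp)$.

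The main obstacle is precisely this same-fibre analysis: controlling, uniformly in~$\pp$, how the column index $\rr = \qq + 2^{\nn-1}$ reduces modulo~$\per_\nn(\pp)$ to a position below or above~$\thres_\nn(\pp)$, so as to rule out $\pp \OP\nn \rr = \qq$ simultaneously with the symmetric equation $\pp' \OP\nn \qq = \rr$ coming from $\rr \in \Col_\nn(\qq)$. I would isolate the needed facts about thresholds and periods as separate lemmas (the right-division statements about~$\AA_\nn$ announced at the start of the section) and feed them, together with the parity constraint of Corollary~\ref{C:Odd}---which already exhibits an odd/even obstruction to mutual divisibility---into the induction. Once closure and antisymmetry are in hand, the reflexivity and extremal statements assembled above complete the proof that~$\Before\nn$ is a partial ordering with~$1$ initial and~$2^\nn$ final.
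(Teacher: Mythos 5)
Your treatment of reflexivity and of the extremal elements is correct and coincides with the paper's. The two substantive claims, however, both have genuine gaps. For transitivity, the reformulation as ``every column $\Col_\nn(\qq)$ is stable under all left translations'' is indeed equivalent to what is wanted, but the proposed induction on~$\nn$ does not close. The induction hypothesis together with the homomorphism property of~$\proj_\nn$ only determines $\bb \OP\nn (\aa \OP\nn \qq)$ modulo~$2^{\nn-1}$; by~\eqref{E:Distinct1} the \emph{shifted} representative of that residue class always lies in~$\Col_\nn(\qq)$, but the unshifted one need not: $2$ lies in~$\Col_2(3)$, yet $2$ does not lie in~$\Col_3(3)$ (the column of~$3$ in~$\AA_3$ is $\{3,4,6,7,8\}$). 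So precisely when the value falls in $\{1 \wdots 2^{\nn-1}\}$ the induction hypothesis gives nothing, and neither does~\eqref{E:LD}, which maps $\Col_\nn(\qq)$ into $\Col_\nn(\bb \OP\nn \qq)$---the \emph{wrong} column; returning to~$\Col_\nn(\qq)$ would require $\Col_\nn(\bb \OP\nn \qq) \subseteq \Col_\nn(\qq)$, which is the equivalence~\eqref{E:Equiv} that the paper derives \emph{from} transitivity, so that route is circular. The paper's actual proof of transitivity is not an induction at all: it rests on Lemma~\ref{L:Monoid}, the associative operation~$\comp_\nn$ satisfying $(\xx \comp_\nn \yy) \OP\nn \zz = \xx \OP\nn (\yy \OP\nn \zz)$, whose existence exploits the facts that left translations are endomorphisms (by~\eqref{E:LD}) and that $1$ generates~$\AA_\nn$. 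This idea, or some substitute for it, is absent from your plan.

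For antisymmetry, your reduction is correct as far as it goes: projection plus the induction hypothesis reduce everything to the same-fibre case $\rr = \qq + 2^{\nn-1}$ with $\qq < 2^{\nn-1}$, and then periodicity and~\eqref{E:LeftMultiples} reduce that to showing that $\pp \OP\nn \qq = \qq$ with $\qq \le 2^{\nn-1}$ forces $\pp = 2^\nn$. But this last statement is exactly the final clause of the paper's Lemma~\ref{L:ValuationPlus}, and it is the technical heart of the whole section; you leave it unproven. It cannot be obtained by the same projection trick you use at the outer level: projecting a hypothetical equality $\pp \OP\nn \qq = \qq$ (with $\pp, \qq < 2^{\nn-1}$) into~$\AA_{\nn-1}$ produces an equality $\pp \OP{\nn-1} \qq = \qq$ in which $\qq$ may lie in the upper half of~$\AA_{\nn-1}$, where such fixed points genuinely exist (for instance $2 \OP2 3 = 3$ in~$\AA_2$), so the induction hypothesis is silent there. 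The paper disposes of this point via the double induction of Lemmas~\ref{L:Valuation} and~\ref{L:ValuationPlus}, whose key step derives from the assumed bad equality the contradictory constraints $\per_\nn(2^{\nn-1}-\qq) \ge \rr+1$ and $\per_\nn(2^{\nn-1}-\qq) \div \rr$. Corollary~\ref{C:Odd} is far too weak to substitute for this: parity only excludes occurrences of a value in columns of the wrong parity class. So both hard halves of the proposition---transitivity, and the fixed-point exclusion underlying antisymmetry---remain open in your proposal.
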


 The Hasse diagrams of the partial orders $\before_2$, $\before_3$, and $\before_4$ are displayed  in Figure~\ref{F:Poset}.  As can be seen there,  $\Before\nn$ is not a linear order for~$\nn \ge 3$; it is not even a lattice order for~$\nn \ge 5$:  one can check that  $18$ and~$19$ admit no least upper bound with respect to~$\before_5$, as one has $18 \before_5 12$, $19 \before_5 12$, $18 \before_5 14$, and $19 \before_5 14$, but $18$ and $19$ admit no common upper bound that is a common lower bound of~$12$ and~$14$.

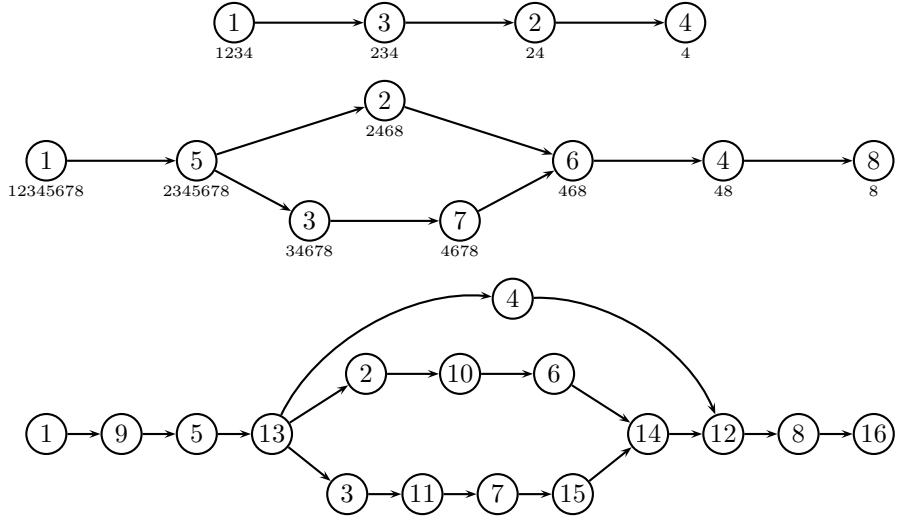
\begin{figure}[htb]
\begin{picture}(60,12)(0,0)
\psset{fillstyle=none}
\cnodeput(0,8){N1}{1}
\rput[c](0,4){\tiny$1234$}
\cnodeput(20,8){N2}{3}
\rput[c](20,4){\tiny$234$}
\cnodeput(40,8){N3}{2}
\rput[c](40,4){\tiny$24$}
\cnodeput(60,8){N4}{4}
\rput[c](60,4){\tiny$4$}
\ncline{->}{N1}{N2}
\ncline{->}{N2}{N3}
\ncline{->}{N3}{N4}
\end{picture}

\begin{picture}(110,22)(0,-4)
\psset{fillstyle=none}
\cnodeput(0,8){N1}{1}
\rput[c](0,4){\tiny$12345678$}
\cnodeput(20,8){N5}{5}
\rput[c](20,4){\tiny$2345678$}
\cnodeput(35,0){N3}{3}
\rput[c](35,-4){\tiny$34678$}
\cnodeput(45,16){N2}{2}
\rput[c](45,12){\tiny$2468$}
\cnodeput(55,0){N7}{7}
\rput[c](55,-4){\tiny$4678$}
\cnodeput(70,8){N6}{6}
\rput[c](70,4){\tiny$468$}
\cnodeput(90,8){N4}{4}
\rput[c](90,4){\tiny$48$}
\cnodeput(110,8){N8}{8}
\rput[c](110,4){\tiny$8$}
\ncline{->}{N1}{N5}
\ncline{->}{N5}{N3}
\ncline{->}{N5}{N2}
\ncline{->}{N3}{N7}
\ncline{->}{N7}{N6}
\ncline{->}{N2}{N6}
\ncline{->}{N6}{N4}
\ncline{->}{N4}{N8}
\end{picture}

\begin{picture}(110,36)(0,-4)
\psset{fillstyle=none}
\cnode(0,8){2.8}{N1}
\rput[c](0,8){1}
\cnode(10,8){2.8}{N9}
\rput[c](10,8){9}
\cnode(20,8){2.8}{N5}
\rput[c](20,8){5}
\cnode(30,8){2.8}{N13}
\rput[c](30,8){13}
\cnode(40,0){2.8}{N3}
\rput[c](40,0){3}
\cnode(50,0){2.8}{N11}
\rput[c](50,0){11}
\cnode(60,0){2.8}{N7}
\rput[c](60,0){7}
\cnode(70,0){2.8}{N15}
\rput[c](70,0){15}
\cnode(80,8){2.8}{N14}
\rput[c](80,8){14}
\cnode(90,8){2.8}{N12}
\rput[c](90,8){12}
\cnode(100,8){2.8}{N8}
\rput[c](100,8){8}
\cnode(110,8){2.8}{N16}
\rput[c](110,8){16}
\cnode(42.5,16){2.8}{N2}
\rput[c](42.5,16){2}
\cnode(55,16){2.8}{N10}
\rput[c](55,16){10}
\cnode(67.5,16){2.8}{N6}
\rput[c](67.5,16){6}
\cnode(62,26){2.8}{N4}
\rput[c](62,26){4}
\ncline{->}{N1}{N9}
\ncline{->}{N9}{N5}
\ncline{->}{N5}{N13}
\ncline{->}{N13}{N3}
\ncline{->}{N13}{N2}
\ncarc[arcangle=35]{->}{N13}{N4}
\ncarc[arcangle=35]{->}{N4}{N12}
\ncline{->}{N3}{N11}
\ncline{->}{N11}{N7}
\ncline{->}{N7}{N15}
\ncline{->}{N15}{N14}
\ncline{->}{N2}{N10}
\ncline{->}{N10}{N6}
\ncline{->}{N6}{N14}
\ncline{->}{N14}{N12}
\ncline{->}{N12}{N8}
\ncline{->}{N8}{N16}
\end{picture}
\caption{\sf\small The (Hasse diagrams of the) partial orders~$\before_2$ on~$\{1 \wdots 4\}$, $\before_3$ on~$\{1 \wdots 8\}$, and $\before_4$ on $\{1 \wdots 16\}$; in the top two diagrams, each node is accompanied with an enumeration of the corresponding column;  the relation~$\before_2$ is a linear order,  whereas $\before_3$ and $\before_4$ are  not linear orders since $2$ and~$3$ are not comparable, but they are lattice orders since any two elements admit a least upper bound (and a greatest lower bound).}
\label{F:Poset}
\end{figure}

Proposition~\ref{P:Poset} is somehow surprising: in an associative context, the counterpart of the $\before$~relation is trivially transitive since $\pp' \op (\pp \op \qq)$ is equal to $(\pp' \op \pp) \op \qq$, so a left-multiple of a left-multiple is still a left-multiple. But this need not be the case in a selfdistributive context, and the relation~$\before$, which makes sense in any algebraic context, need not be an ordering. By the way, the symmetric version of~$\Before\nn$ in~$\AA_\nn$ is not transitive for  $\nn>0$: writing  $\qq \beforesym_{\!\nn} \rr$  for $\exists\pp(\qq \OP\nn \pp = \rr)$, we have for instance $1 \beforesym_{\!\nn} 2^\nn \beforesym_{\!\nn} 1$, but not $1 \beforesym_{\!\nn} 1$---however, it can be mentioned that, in the case of a free left-selfdistributive structure on one generator, the transitive closure of the relation~$\beforesym$ turns out to be a linear ordering, a deep nontrivial result, see~\cite{Dgd}.

Before proving Proposition~\ref{P:Poset}, we begin with auxiliary results (of independent interest). The first task is to understand when the value~$\qq$ may appear in the column of~$\qq$. We first establish a positive result, which essentially corresponds to Proposition~1.4 in~\cite{DraSem}.

\begin{lemm}
\label{L:Valuation}
Assume $1 \le \pp \le 2^\nn$ and $2^\dd \div \pp$. Then 
\begin{equation}
\label{E:Valuation}
\pp \OP\nn (2^\nn - \qq) = 2^\nn - \qq
\end{equation}
holds for $0 \le \qq < 2^\dd$. 
\end{lemm}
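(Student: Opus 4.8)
The plan is to prove the statement \eqref{E:Valuation} by induction on~$\nn$, using the projection machinery of Proposition~\ref{P:Proj} together with the structural facts recorded in Section~\ref{S:Laver}. The claim asserts that the top $2^\dd$ values in the table, namely $2^\nn, 2^\nn-1 \wdots 2^\nn-2^\dd+1$, are all fixed by left-multiplication by any~$\pp$ divisible by~$2^\dd$. For the base case $\nn=0$ (and the trivial situation $\dd=0$, where only $\qq=0$ is allowed and \eqref{E:LastColumn} gives $\pp \OP\nn 2^\nn = 2^\nn$ directly) the statement reduces to facts already established.

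First I would set up the induction step, assuming the result holds for~$\AA_{\nn-1}$ and deducing it for~$\AA_\nn$. I would distinguish the two ranges of~$\pp$. If $2^{\nn-1} \le \pp < 2^\nn$, then $\pp$ divisible by~$2^\dd$ with $\dd \ge 1$ forces a manageable situation, and in fact for the extreme rows \eqref{E:LastRows} pins down $(2^\nn{-}1)\OP\nn\qq = 2^\nn$ and $2^\nn \OP\nn \qq = \qq$, which I would use to handle the boundary directly. The substantial case is $\pp < 2^{\nn-1}$ with $2^\dd \div \pp$ (so $\dd \le \nn-1$): here I would apply the inductive hypothesis in~$\AA_{\nn-1}$ to get $\pp \OP{\nn-1}(2^{\nn-1}-\qq') = 2^{\nn-1}-\qq'$ for $0 \le \qq' < 2^\dd$, and then transport this to~$\AA_\nn$ via the threshold description~\eqref{E:Proj}. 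The key point is that the values $2^\nn-\qq$ with $0 \le \qq < 2^\dd$ sit in the top part of the row of~$\pp$ in~$\AA_\nn$, i.e.\ past the threshold~$\thres_\nn(\pp)$, so that \eqref{E:Proj} shifts the corresponding $\AA_{\nn-1}$-values by~$2^{\nn-1}$; combined with the projection identity $\proj_\nn(\pp \OP\nn \yy) = \bar\pp \OP{\nn-1}\bar\yy$ this should force $\pp \OP\nn (2^\nn-\qq) = 2^\nn-\qq$.

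The main obstacle I anticipate is the bookkeeping around \emph{which} arguments $\yy = 2^\nn-\qq$ land beyond the threshold and hence receive the $2^{\nn-1}$-shift, and more delicately, controlling the periodicity: the values $2^\nn - \qq$ for $0 \le \qq < 2^\dd$ must be shown to actually occur as entries $\pp \OP\nn \yy$ for appropriate~$\yy$ within one period~$\per_\nn(\pp)$, and to occur at the fixed-point positions. Rather than tracking thresholds by hand, I would prefer a cleaner inductive argument directly in~$\AA_\nn$: I would try to prove the slightly more flexible statement that $\pp \OP\nn \yy = \yy$ whenever $\yy \ge 2^\nn - 2^\dd + 1$ by downward induction on~$\yy$ starting from $\yy = 2^\nn$ (where \eqref{E:LastColumn} applies), using selfdistributivity in the form~\eqref{E:Laver2} to pass from~$\yy$ to a neighbouring value. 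Concretely, if I know $\pp \OP\nn \yy = \yy$ for some~$\yy$ in the top block and want the same for an adjacent argument, I would feed the relation $\pp \OP\nn (\yy \OP\nn 1) = (\pp \OP\nn \yy)\OP\nn(\pp \OP\nn 1) = \yy \OP\nn (\pp \OP\nn 1)$ and exploit the $2$-adic valuation hypothesis $2^\dd \div \pp$ to control $\pp \OP\nn 1 = \pp+1$ modulo the relevant power of two.

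The cleanest route, which I would pursue first, is therefore the projection argument: reduce to~$\AA_{\nn-1}$ via $\proj_\nn$, observe that both sides of \eqref{E:Valuation} project to the inductively known equality in~$\AA_{\nn-1}$, and then use \eqref{E:LeftMultiples} (giving $\pp < \pp \OP\nn(2^\nn-\qq)$) plus the two-valued indeterminacy $\pp \OP\nn \yy \in \{\bar\pp \OP{\nn-1}\bar\yy,\ \bar\pp \OP{\nn-1}\bar\yy + 2^{\nn-1}\}$ to select the correct lift. Since $2^\nn - \qq$ already lies in the upper half $[2^{\nn-1}+1, 2^\nn]$ for the relevant small~$\qq$, its projection is $2^{\nn-1}-\qq$ and the shifted lift $(2^{\nn-1}-\qq)+2^{\nn-1} = 2^\nn-\qq$ is exactly the desired fixed value; I expect the remaining work to be verifying that the shifted (rather than unshifted) lift is forced, which is precisely where the threshold inequality from Proposition~\ref{P:Proj}\ITEM2 enters.
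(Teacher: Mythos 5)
Your reduction via~$\proj_\nn$ is sound as far as it goes, and it settles the upper half essentially for free: for $2^{\nn-1} \le \pp < 2^\nn$ the two candidate values are $2^{\nn-1}-\qq$ and $2^\nn-\qq$, and \eqref{E:LeftMultiples} (which gives $\pp \OP\nn \yy > \pp \ge 2^{\nn-1}$) rules out the small one. The genuine gap is in the case $\pp < 2^{\nn-1}$, exactly where you lean on the threshold. You claim that the arguments $2^\nn-\qq$ with $0 \le \qq < 2^\dd$ ``sit past the threshold~$\thres_\nn(\pp)$'' and that ``the threshold inequality from Proposition~\ref{P:Proj}\ITEM2 enters'' to force the shifted lift. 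But Proposition~\ref{P:Proj}\ITEM2 contains no usable inequality: it only asserts the existence of some threshold with $0 \le \thres_\nn(\pp) \le \per_{\nn-1}(\pp)$, and the paper stresses that this parameter is precisely the new, undetermined datum that distinguishes~$\AA_\nn$ from~$\AA_{\nn-1}$. Unwinding periodicity, $\pp \OP\nn (2^\nn-\qq) = \pp \OP\nn (\per_\nn(\pp)-\qq)$, so what you actually need in the non-doubling case $\per_\nn(\pp)=\per_{\nn-1}(\pp)$ is the bound $\thres_\nn(\pp) \le \per_{\nn-1}(\pp) - 2^\dd$; given your inductive hypothesis, this bound is \emph{equivalent} to the conclusion of Lemma~\ref{L:Valuation} for that row, so your argument is circular at its crucial step. (The doubling case $\thres_\nn(\pp)=\per_{\nn-1}(\pp)$ does go through your way, but that is the easy subcase.)

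What is missing is an exclusion mechanism for the bad lift, and this is where the paper works hardest. It argues by contradiction inside an induction on~$\qq$: if $\pp \OP\nn (2^\nn-\qq) = 2^{\nn-1}-\qq$, then \eqref{E:Laver2} together with the already-known equality $\pp \OP\nn (2^\nn-\qq+1) = 2^\nn-\qq+1$ yields $(2^{\nn-1}-\qq) \OP\nn (\pp+1) = 2^\nn-\qq+1$, and this forces contradictory bounds on $\per_\nn(2^{\nn-1}-\qq)$: letting $\rr \le \qq$ be minimal with $(2^{\nn-1}-\qq) \OP\nn (\pp+\rr) = 2^\nn$, the row of~$2^{\nn-1}-\qq$ exhibits at least $\rr+1$ distinct values, so its period is $\ge \rr+1$; on the other hand the period divides $\pp+\rr$ and, being a power of~$2$ with $2^\dd \div \pp$ and $\rr \le \qq < 2^\dd$, it divides~$\rr$, hence is $\le \rr$. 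Your alternative sketch (downward induction on~$\yy$ with \eqref{E:Laver2} and ``$2$-adic control of $\pp+1$'') names the right ingredients but not the mechanism: \eqref{E:Laver2} propagates values \emph{upward}, from argument~$\yy$ to~$\yy+1$, so a downward induction can only be run as a proof by contradiction, and the contradiction itself is the period-divisibility analysis above, which appears nowhere in your proposal.
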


In other words, if $2^\dd$ divides~$\pp$, then the last $2^\dd$ values in the row of~$\pp$ in the table of~$\AA_\nn$ are the consecutive numbers $2^\nn {-} 2^\dd {+} 1, 2^\nn {-} 2^\dd {+} 2 \wdots 2^\nn$. For instance, the last $8$ values in the rows of $8$, $16$, $24$, and~$32$ in the table of~$\AA_5$ must be $25, 26 \wdots 32$.

\begin{proof}
We establish the result using induction on~$\nn$. For $\nn = 0$, the result is trivial and, for $\nn = 1$, it says that the last value of every row in~$\AA_1$ is~$2$, whereas the last two values in the row of~$2$ are~$1$ and~$2$, which is true. 

From now on, we assume $\nn \ge 2$ and use induction on~$\dd$ increasing from~$0$ to~$\nn$. For $\dd = 0$, the statement says that the last value in the row of~$\pp$ is~$2^\nn$, which is indeed true. Assume now $1 \le \dd < \nn$ and $2^\dd \div \pp$. We shall establish~\eqref{E:Valuation} using induction on~$\qq$ increasing from~$0$ to~$2^\dd - 1$. For $0 \le \qq < 2^{\dd-1}$, \eqref{E:Valuation} follows from the induction hypothesis on~$\dd$. Assume now $2^{\dd-1} \le \qq < 2^\dd$. Put $\bar\pp = \MOD\pp{2^{\nn-1}}$. By assumption, we have $\dd \le \nn-1$, so $2^\dd$ also divides~$\bar\pp$, and $\qq < 2^{\nn - 1}$. Hence the induction hypothesis on~$\nn$ implies 
$$\bar\pp \ \op_{\nn-1}\  (2^{\nn-1} - \qq) = 2^{\nn-1} - \qq.$$ 
Lifting this equality from~$\AA_{\nn-1}$ to~$\AA_\nn$, we deduce that we have either~\eqref{E:Valuation}, or
\begin{equation}
\label{E:Valuation1}
\pp \OP\nn (2^\nn - \qq) = 2^{\nn-1} - \qq,
\end{equation}
and our problem is to exclude~\eqref{E:Valuation1}. So assume for a contradiction that \eqref{E:Valuation1} is true, and consider the next value in the row of~$\pp$, namely $\pp \OP\nn (2^\nn {-} \qq {+}1)$. 
On the one hand, by induction hypothesis, \eqref{E:Valuation} is true for $\qq - 1$, that is, we have
\begin{equation}
\label{E:Valuation2}
\pp \ \OP\nn\  (2^\nn - \qq + 1) = 2^\nn  - \qq + 1.
\end{equation}
On the other hand, \eqref{E:Laver2} and \eqref{E:Valuation1} give 
$$\pp \OP\nn (2^\nn {-} \qq {+}1) = (\pp \OP\nn (2^\nn {-} \qq)) \OP\nn (\pp \OP\nn 1) = (2^{\nn-1} {-} \qq) \OP\nn (\pp+1),$$
whence, merging with~\eqref{E:Valuation2},
\begin{equation}
\label{E:Valuation3}
(2^{\nn-1} - \qq) \ \OP\nn\  (\pp+1) = 2^\nn - \qq + 1.
\end{equation}
We shall see that \eqref{E:Valuation3} is impossible because it implies contradictory constraints for $\per_\nn(2^{\nn-1}-\qq)$, the period of $2^{\nn-1} - \qq$ in~$\AA_\nn$. 

Indeed, let us analyze the row of $2^{\nn-1} {-} \qq$ in~$\AA_\nn$. By~\eqref{E:Valuation3}, the value at~$\pp + 1$ is $2^\nn - \qq + 1$. As values in a row increase until~$2^\nn$ is reached, the smallest positive~$\rr$ for which we have
\begin{equation}
\label{E:Valuation4}
(2^{\nn-1} - \qq) \ \OP\nn\ (\pp + \rr) = 2^\nn.
\end{equation}
must satisfy $\rr \le \qq$. 
Then the row of $2^{\nn-1} {-} \qq$ contains at least $\rr+1$ values, namely the $\rr$~values at $\pp+1 \wdots \pp + \rr$, which are above $2^\nn {-} \qq$, hence larger than~$2^{\nn-1}$, plus the value at~$1$, which is $2^{\nn-1} {-} \qq {+} 1$, smaller than or equal to~$2^{\nn-1}$ and therefore different from the previous $\rr$~values. Hence, we must have 
\begin{equation}
\label{E:Valuation5}
\per_\nn(2^{\nn-1} - \qq) \ge \rr + 1. 
\end{equation}
On the other hand, \eqref{E:Valuation4} implies $\per_\nn(2^{\nn-1} - \qq) \div (\pp + \rr)$, whence $\per_\nn(2^{\nn-1} - \qq) \div \rr$ since we have $2^\dd \div \pp$ and $\rr \le \qq < 2^\dd$, so that  every  power of~$2$ dividing~$\pp + \rr$ also divides~$\rr$.  Hence, in particular, so does  $\per_\nn(2^{\nn-1} - \qq)$. We deduce $\per_\nn(2^{\nn-1} - \qq) \le \rr$, contradicting~\eqref{E:Valuation5}. Thus  \eqref{E:Valuation3} and, therefore, \eqref{E:Valuation1}, are impossible, and \eqref{E:Valuation} must be true.

Finally, for $\dd = \nn$, the statement says that the last (and first!) $2^\nn$ values in the row of~$2^\nn$ are $1, 2 \wdots 2^\nn$, which is indeed true.
\end{proof}

We now refine Lemma~\ref{L:Valuation} by proving that $\qq$ may appear in the column of~$\qq$ only at the positions previously described.

\begin{lemm}
\label{L:ValuationPlus}
Assume $1 \le \pp \le 2^\nn$ and $2^\dd \div \pp$ with $2^{\dd+1} \notdiv \pp$. Then $\pp \OP\nn \qq = \qq$ holds for $2^\nn - 2^\dd < \qq \le 2^\nn$ and fails for $1 \le \qq \le 2^\nn - 2^\dd$. In particular, for $\qq \le 2^{\nn-1}$, the equality $\pp \OP\nn \qq = \qq$ holds for $\pp = 2^\nn$ only.
\end{lemm}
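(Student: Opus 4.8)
The plan is to build on Lemma~\ref{L:Valuation}, which already supplies the \emph{positive} part: since $2^\dd \div \pp$, the equality $\pp \OP\nn \qq = \qq$ holds for all $\qq$ in the range $2^\nn - 2^\dd < \qq \le 2^\nn$ (these are the values $2^\nn - 2^\dd + 1 \wdots 2^\nn$). So the only new content is the \emph{negative} part: for $1 \le \qq \le 2^\nn - 2^\dd$ we must show $\pp \OP\nn \qq \neq \qq$. The hypothesis $2^{\dd+1} \notdiv \pp$, unused in Lemma~\ref{L:Valuation}, is exactly what should drive this: it pins down the $2$-adic valuation of~$\pp$ to be precisely~$\dd$.

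First I would dispose of the case $\pp = 2^\nn$ separately, since then $\dd = \nn$ and there is nothing in the range $1 \le \qq \le 2^\nn - 2^\dd = 0$ to check; and for $\pp < 2^\nn$ I would invoke~\eqref{E:LeftMultiples}, which gives $\pp < \pp \OP\nn \qq$ for every~$\qq$. This immediately rules out $\pp \OP\nn \qq = \qq$ whenever $\qq \le \pp$, so the genuine work concerns $\pp < \qq \le 2^\nn - 2^\dd$. For these, I would argue by induction on~$\nn$ in parallel with the structure of Lemma~\ref{L:Valuation}, passing to $\AA_{\nn-1}$ via the projection~$\proj_\nn$ of Proposition~\ref{P:Proj}. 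Writing $\bar\pp = \MOD\pp{2^{\nn-1}}$, the valuation hypothesis transfers: $2^\dd \div \bar\pp$ and $2^{\dd+1} \notdiv \bar\pp$ provided $\dd < \nn - 1$ (the borderline $\dd = \nn-1$, forcing $\pp = 2^{\nn-1}$, I would treat by hand using~\eqref{E:LastColumn} and the explicit rows).

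The key step is then to combine two facts. On the one hand, by Proposition~\ref{P:Proj}, $\pp \OP\nn \qq$ equals either $\bar\pp \OP{\nn-1} \bar\qq$ or $\bar\pp \OP{\nn-1} \bar\qq + 2^{\nn-1}$, and $\proj_\nn(\pp \OP\nn \qq) = \bar\pp \OP{\nn-1} \bar\qq$. On the other hand, the induction hypothesis applied in~$\AA_{\nn-1}$ tells us that $\bar\pp \OP{\nn-1} \bar\qq \neq \bar\qq$ for the relevant range of~$\bar\qq$, i.e. for $\bar\qq \le 2^{\nn-1} - 2^\dd$ (after handling $\bar\qq$ in the top~$2^\dd$ band, where Lemma~\ref{L:Valuation} already gives equality in~$\AA_{\nn-1}$, and one must check the lift does not accidentally produce $\pp \OP\nn \qq = \qq$). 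If $\bar\pp \OP{\nn-1} \bar\qq \neq \bar\qq$ then a fortiori $\pp \OP\nn \qq \neq \qq$, since projecting would force $\bar\pp \OP{\nn-1} \bar\qq = \bar\qq$. The delicate remaining situation is when $\bar\pp \OP{\nn-1} \bar\qq = \bar\qq$ but $\qq \le 2^\nn - 2^\dd$, i.e. $\qq$ lies in the lower half $\qq \le 2^{\nn-1}$ with its reduction in the fixed-point band of $\AA_{\nn-1}$; here the lift gives $\pp \OP\nn \qq \in \{\bar\qq, \bar\qq + 2^{\nn-1}\}$, and I would use~\eqref{E:LeftMultiples} together with the monotonicity of the row of~$\pp$ up to its period (Theorem~\ref{T:Laver}\ITEM2) and the already-established last-block description to rule out $\pp \OP\nn \qq = \qq$.

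I expect the main obstacle to be precisely this boundary analysis between the two halves: reconciling the fixed-point positions in $\AA_{\nn-1}$ (of which there are $2^\dd$, by the inductive version of the lemma) with the $2^\dd$ fixed-point positions in $\AA_\nn$, and showing that the lift never moves a non-fixed-point of $\AA_{\nn-1}$ onto a fixed-point of~$\AA_\nn$ in the forbidden range. The cleanest way to control this is likely to phrase the argument in terms of $\per_\nn(\pp)$ and the threshold~$\thres_\nn(\pp)$, exploiting that $\per_\nn(\pp) \div 2^{\nn-\dd}$ follows from the last-block structure, so that the row of~$\pp$ repeats with a period compatible with the $2^\dd$-sized terminal fixed block, and any equality $\pp \OP\nn \qq = \qq$ outside that block would contradict~\eqref{E:LeftMultiples} once combined with the strict increase of the row up to~$2^\nn$.
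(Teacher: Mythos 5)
Your proof has the same skeleton as the paper's: the positive half is exactly Lemma~\ref{L:Valuation}, the case $\pp = 2^\nn$ is trivial, and the negative half goes by induction on~$\nn$ through the projection of Proposition~\ref{P:Proj}, the crux being the band $2^{\nn-1} - 2^\dd < \qq \le 2^{\nn-1}$, where $\bar\qq$ is fixed by~$\bar\pp$ in~$\AA_{\nn-1}$ and the induction hypothesis is silent. Precisely there, however, your argument has a genuine gap: the tools you name---\eqref{E:LeftMultiples}, the strict increase of the row of~$\pp$ up to its period, and the terminal fixed block---do not rule out $\pp \OP\nn \qq = \qq$. Take $\nn = 4$, $\pp = 2$ (so $\dd = 1$ and $\per_4(2) = 4$) and $\qq = 7$: by periodicity $2 \OP{4} 7$ equals the value $v_3$ at position $3 = \MOD{7}{4}$, and strict increase on the first period together with \eqref{E:LeftMultiples} only forces $5 \le v_3 \le 15$, which does not exclude $v_3 = 7$; the projection to~$\AA_3$ is no help either, since $2 \OP{3} 7 = 7$ does hold (this is exactly why the case is delicate). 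Moreover the divisibility you propose to exploit, $\per_\nn(\pp) \div 2^{\nn-\dd}$, is false in general: for $\pp = 4$ in~$\AA_3$ one has $\per_3(4) = 4$ and $2^{\nn-\dd} = 2$.

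What closes the case---and is the paper's actual argument---is a periodicity transfer into the terminal block rather than any monotonicity estimate: since $\pp \ne 2^\nn$, the period $\per_\nn(\pp)$ is a power of~$2$ that is at most~$2^{\nn-1}$, hence divides~$2^{\nn-1}$, so $\pp \OP\nn \qq = \pp \OP\nn (\qq + 2^{\nn-1})$; as $\qq + 2^{\nn-1} > 2^\nn - 2^\dd$, Lemma~\ref{L:Valuation} gives $\pp \OP\nn (\qq + 2^{\nn-1}) = \qq + 2^{\nn-1}$, whence $\pp \OP\nn \qq = \qq + 2^{\nn-1} \ne \qq$. (In the example: $2 \OP{4} 7 = 2 \OP{4} 15 = 15$.) With this replacement your induction becomes exactly the paper's proof; note also that the borderline case $\dd = \nn - 1$, i.e.\ $\pp = 2^{\nn-1}$, then needs no separate ``by hand'' treatment: the band covered by the induction hypothesis is empty there, and the periodicity argument alone handles all $\qq \le 2^{\nn-1}$.
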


\begin{proof}
First we note that the result is obvious for $\pp = 2^\nn$, and from now we restrict to the case $\pp < 2^\nn$, implying $\dd \le \nn-1$. We use induction on~$\nn$. The result is true in~$\AA_0$ and~$\AA_1$. Assume $\nn \ge 2$ and $1 \le \pp < 2^\nn$ with $2^\dd \div \pp$ and $2^{\dd+1} \notdiv \pp$. We inspect the $\pp$th row in the table of~$\AA_\nn$, that is, the values $\pp \OP\nn \qq$ for $1 \le \qq \le 2^\nn$. Let $\bar\pp = \MOD\pp{2^{\nn-1}}$ and $\bar\qq = \MOD\qq{2^{\nn-1}}$. We consider two cases.

Assume first $1 \le \bar\qq \le 2^{\nn-1} - 2^\dd$. This case  may arise only for~$\dd < \nn-1$.  Consequently, we have $2^\dd \div2^{\dd+1} \div 2^{\nn-1}$, so the conjunction of  $2^\dd \div \pp$ and $2^{\dd+1} \notdiv \pp$ implies $2^\dd \div \bar\pp$ and $2^{\dd+1} \notdiv \bar\pp$. Then the induction hypothesis  applies, thus  giving $\bar\pp \OP{\nn-1} \bar\qq \not= \bar\qq$, which implies $\pp \OP\nn \qq \not= \qq$ by Proposition~\ref{P:Proj}.

Assume now $2^{\nn-1} - 2^\dd < \bar\qq \le 2^{\nn-1}$, that is, either   \ITEM1 $2^{\nn-1} - 2^\dd < \qq \le 2^{\nn-1}$ or \ITEM2 $2^\nn - 2^\dd < \qq \le 2^\nn$. In case~\ITEM2, Lemma~\ref{L:Valuation} implies $\pp \OP\nn \qq = \qq$, as expected. Now, as $\pp$ is not~$2^\nn$, the period~$\per_\nn(\pp)$ is at most~$2^{\nn-1}$ and, therefore, in case~\ITEM1, we have $\pp \OP\nn \qq = \pp \OP\nn (\qq + 2^{\nn-1})$, whence $\pp \OP\nn \qq = \qq + 2^{\nn-1} \not= \qq$ by Lemma~\ref{L:Valuation} again. Thus $\pp \OP\nn \qq = \qq$ holds exactly in the expected cases.
\end{proof}

As we now control the occurrences of the value~$\qq$ in the column of~$\qq$ precisely, we can easily deduce that  the columns are pairwise distinct in a Laver table (Theorem~1.8 of~\cite{DraSem}).

\begin{lemm}
\label{L:Distinct}
For every~$\nn$, the families~$\Col_\nn(\qq)$ are pairwise distinct when $\qq$ ranges over $\{1 \wdots 2^\nn\}$. 
\end{lemm}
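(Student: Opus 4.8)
The plan is to argue by induction on~$\nn$, using the projection $\proj_\nn \colon \AA_\nn \to \AA_{\nn-1}$ to peel off one level and Lemma~\ref{L:ValuationPlus} to settle the residual ambiguity. The base cases $\nn = 0, 1$ are immediate. For the inductive step, the first observation is that $\proj_\nn$ carries columns to columns: since $\proj_\nn$ is a homomorphism (Proposition~\ref{P:Proj}\ITEM1) and $\bar\pp := \MOD\pp{2^{\nn-1}}$ runs through all of $\{1 \wdots 2^{\nn-1}\}$ as $\pp$ runs through $\{1 \wdots 2^\nn\}$, one gets $\proj_\nn(\Col_\nn(\qq)) = \Col_{\nn-1}(\bar\qq)$ with $\bar\qq = \MOD\qq{2^{\nn-1}}$. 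Hence, if $\Col_\nn(\qq) = \Col_\nn(\qq')$, applying $\proj_\nn$ and the induction hypothesis forces $\bar\qq = \bar\qq'$, so that $\qq$ and $\qq'$ either coincide or form a pair $\{\qq_0, \qq_0 + 2^{\nn-1}\}$ with $1 \le \qq_0 \le 2^{\nn-1}$.

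It then remains to separate $\Col_\nn(\qq_0)$ from $\Col_\nn(\qq_0 + 2^{\nn-1})$. The degenerate case $\qq_0 = 2^{\nn-1}$, i.e. the pair $\{2^{\nn-1}, 2^\nn\}$, is handled at once: by~\eqref{E:LastColumn} the column of~$2^\nn$ is the singleton~$\{2^\nn\}$, whereas $\Col_\nn(2^{\nn-1})$ contains~$2^{\nn-1}$ by~\eqref{E:LastRows}, so the two differ. For $\qq_0 < 2^{\nn-1}$, the distinguishing invariant I would use is the value~$\qq_0$ itself: it always lies in its own column (take $\pp = 2^\nn$ and use~\eqref{E:LastRows}), and the claim is that $\qq_0 \notin \Col_\nn(\qq_0 + 2^{\nn-1})$, which finishes the proof.

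To prove this last claim, suppose $\pp \OP\nn (\qq_0 + 2^{\nn-1}) = \qq_0$ for some~$\pp$. Projecting and using $\MOD{(\qq_0 + 2^{\nn-1})}{2^{\nn-1}} = \qq_0$ gives $\bar\pp \OP{\nn-1} \qq_0 = \qq_0$ in~$\AA_{\nn-1}$. Lemma~\ref{L:ValuationPlus}, applied in~$\AA_{\nn-1}$ with $2^\ee \div \bar\pp$ and $2^{\ee+1} \notdiv \bar\pp$, then forces $\qq_0 > 2^{\nn-1} - 2^\ee$. The key transfer step is that every power of~$2$ dividing~$\bar\pp$ also divides~$\pp$: indeed $\pp$ is either $\bar\pp$ or $\bar\pp + 2^{\nn-1}$, and $2^\ee \div \bar\pp$ forces $\ee \le \nn-1$, hence $2^\ee \div 2^{\nn-1}$, hence $2^\ee \div \pp$. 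Consequently, writing $2^\dd \div \pp$ with $2^{\dd+1}\notdiv \pp$, we have $2^\dd \ge 2^\ee > 2^{\nn-1} - \qq_0$, whence $\qq_0 + 2^{\nn-1} > 2^\nn - 2^\dd$. Lemma~\ref{L:ValuationPlus}, now applied in~$\AA_\nn$, yields $\pp \OP\nn (\qq_0 + 2^{\nn-1}) = \qq_0 + 2^{\nn-1} \neq \qq_0$, a contradiction.

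The main obstacle is precisely this last claim, $\qq_0 \notin \Col_\nn(\qq_0 + 2^{\nn-1})$: the naive guess that the smaller index never appears in the larger index's column is false (for instance $2 \in \Col_2(3)$), so one genuinely needs the reduction to a $2^{\nn-1}$-shifted pair, where the valuation comparison $2^\ee \div \bar\pp \Rightarrow 2^\ee \div \pp$ makes the two applications of Lemma~\ref{L:ValuationPlus} compatible --- one downstairs to locate $\qq_0$ as a fixed point of left translation by~$\bar\pp$, one upstairs to force the value back up to $\qq_0 + 2^{\nn-1}$. Everything else is routine bookkeeping with~$\proj_\nn$ and the $2$-adic valuations.
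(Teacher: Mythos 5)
Your proof is correct and follows essentially the same route as the paper's: induction on~$\nn$, reduction via Proposition~\ref{P:Proj} to the problem of separating $\Col_\nn(\qq_0)$ from~$\Col_\nn(\qq_0+2^{\nn-1})$, and Lemma~\ref{L:ValuationPlus} to certify that $\qq_0$ lies in the first column but not in the second. The only divergence is in how that non-membership is obtained: the paper observes that $\per_\nn(\pp)\div 2^{\nn-1}$ for $\pp<2^\nn$ gives $\pp\OP\nn(\qq_0+2^{\nn-1})=\pp\OP\nn\qq_0\neq\qq_0$ directly by the final clause of Lemma~\ref{L:ValuationPlus}, whereas your two-level valuation transfer (applying the lemma in both~$\AA_{\nn-1}$ and~$\AA_\nn$) reaches the same contradiction, a cosmetic rather than structural difference.
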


\begin{proof}
We shall use induction on~$\nn$. The result is true in~$\AA_0$ (vacuously) and~$\AA_1$. Assume $\nn \ge 2$. We shall analyze $\Col_\nn(\qq) \cap \{2^{\nn-1}+1 \wdots 2^\nn\}$, that is, inspect the large values in the column of~$\qq$. So, consider $1 \le \pp, \qq \le 2^\nn$. As usual, we put $\bar\pp = \MOD\pp{2^{\nn-1}}$ and $\bar\qq = \MOD\qq{2^{\nn-1}}$. By Proposition~\ref{P:Proj}, we have
$$\pp \ \OP\nn\  \qq = \bar\pp \ \op_{\nn-1}\  \bar\qq + \eps 2^{\nn-1}$$
with $\eps$ in~$\{0, 1\}$; moreover, we know that all values in the row of~$\pp$ are larger than~$\pp$ for $\pp \neq 2^\nn$, so $\eps$ must be~$1$ for $2^\nn > \pp > 2^{\nn-1}$. Together with
$$2^{\nn - 1} \ \OP\nn\  \qq = \bar\qq +  2^{\nn-1} = 2^{\nn - 1} \ \op_{\nn-1}\  \bar\qq +  2^{\nn-1},$$
this implies, with obvious notation,
\begin{equation}
\label{E:Distinct1}
\Col_\nn(\qq) \cap \{2^{\nn-1}+1 \wdots 2^\nn\} = \Col_{\nn-1}(\bar\qq) + 2^{\nn-1}.
\end{equation}
Now assume $1 \le \qq < \rr \le 2^\nn$. We write $\bar\rr = \MOD\rr{2^{\nn-1}}$, and consider two cases.

Assume first $\bar\qq \not= \bar\rr$. Then the induction hypothesis implies $\Col_{\nn-1}(\bar\qq) \not= \Col_{\nn-1}(\bar\rr)$, and, by \eqref{E:Distinct1}, we deduce $\Col_\nn(\qq) \cap \{2^{\nn-1}+1 \wdots 2^\nn\} \not= \Col_\nn(\rr) \cap \{2^{\nn-1}+1 \wdots 2^\nn\}$, whence $\Col_\nn(\qq) \not= \Col_\nn(\rr)$ \textit{a fortiori}.

Assume now $\bar\qq = \bar\rr$. Then we necessarily have $\rr = \qq + 2^{\nn-1}$, and \eqref{E:Distinct1} only says that the intersections of $\Col_\nn(\qq)$ and $\Col_\nn(\rr)$ with $\{2^{\nn-1}+1 \wdots 2^\nn\}$ coincide. Now look at the $\pp$th row with $\pp < 2^\nn$, (the final statement in) Lemma~\ref{L:ValuationPlus} tells us that we have $\pp \OP\nn \qq \not= \qq$, which also implies $\pp \OP\nn \rr \not= \qq$ since $\per_\nn(\pp)$ divides~$2^{\nn-1}$. In other words, the value~$\qq$ cannot appear in the columns of~$\qq$ and~$\rr$, except possibly in the row of~$2^\nn$. Now we have $2^\nn \OP\nn \qq = \qq$, and $2^\nn \OP\nn \rr = \rr \not= \qq$, so $\Col_\nn(\qq)$ and $\Col_\nn(\rr)$ as distinct (and more precisely,  using $\sqcup$ for disjoint union, we have $\Col_\nn(\qq) = \Col_\nn(\rr) \sqcup \{\qq\}$ since the periods of all rows except the last one divides~$\rr -\nobreak \qq$, which is  $2^{\nn-1}$).
\end{proof}

Note that, in the proof, we have precisely determined how certain columns differ---namely, for all~$\qq \le 2^{\nn-1}$, one has
\begin{equation}
\label{E:DistinctPrecise}
\Col_\nn(\qq) = \Col_\nn( \qq + 2^{\nn-1}) \sqcup \{\qq\}.
\end{equation}

The last preliminary result before proving Proposition~\ref{P:Poset} is a (nontrivial) connection between the operation~$\op$ and an associative operation. Well known to R.\,Laver, the result is implicit in~\cite{Lvd}, and explicit for instance in~\cite{DouJech} and~\cite{DraSem}---see also \cite[Proposition~XI.2.15]{Dgd}, where the result comes as an application of general facts about what is called LD-monoids. 

\begin{lemm}
\label{L:Monoid}
For every~$\nn$, there exists a unique binary operation $\comp$ on~$\{1 \wdots 2^\nn\}$ such that the law
\begin{equation}
\label{E:Monoid}
(\xx \comp \yy) \op \zz = \xx \op (\yy \op \zz),
\end{equation}
is obeyed, namely the operation~$\comp_\nn$ defined by
\begin{equation}
\label{E:DefComp}
\pp \comp_\nn \qq = \begin{cases}
\pp \OP\nn (\qq+1) -1 &\mbox{for $\qq < 2^n$},\\
\pp &\mbox{for $\qq = 2^\nn$}
\end{cases}
\end{equation}
The operation~$\comp_n$ is associative, and it admits~Ê$2^\nn$ as a neutral element.
\end{lemm}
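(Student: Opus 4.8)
The plan is to derive all four assertions---existence, uniqueness, associativity, and neutrality of $2^\nn$---from a single key observation: by~\eqref{E:Laver1} the map $\sigma\colon\ww\mapsto\ww\op 1=\MOD{\ww+1}{2^\nn}$ is the ``successor mod $2^\nn$'' permutation of $\{1\wdots 2^\nn\}$, hence a bijection. I would use its injectivity for every uniqueness-type argument and the fact that iterating it from~$1$ visits all of $\AA_\nn$ for the existence argument. First, uniqueness: if $\comp$ obeys~\eqref{E:Monoid}, then specializing $\zz=1$ gives $(\xx\comp\yy)\op 1=\xx\op(\yy\op 1)$, and since $\cdot\op 1$ is injective, $\xx\comp\yy$ is forced by $\xx,\yy$; so at most one operation can satisfy~\eqref{E:Monoid}.

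Next I would check that the explicit $\comp_\nn$ of~\eqref{E:DefComp} satisfies this same $\zz=1$ instance, by a direct computation splitting on $\qq=\yy$. For $\yy<2^\nn$ one has $\yy\op 1=\yy+1$ and $(\xx\comp_\nn\yy)\op 1=\MOD{\xx\OP\nn(\yy+1)}{2^\nn}=\xx\OP\nn(\yy+1)=\xx\op(\yy\op 1)$; here I would note in passing that $\xx\OP\nn(\yy+1)\ge 2$ by~\eqref{E:LeftMultiples} and~\eqref{E:LastRows}, so that $\xx\comp_\nn\yy$ indeed lands in $\{1\wdots 2^\nn\}$. For $\yy=2^\nn$ both sides equal $\xx\op 1$ since $\yy\op 1=1$ and $\xx\comp_\nn 2^\nn=\xx$.

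The crux is then upgrading the identity from $\zz=1$ to all~$\zz$, and this is the only step I expect to require genuine idea rather than bookkeeping. Fixing $\xx,\yy$, I would set $L(\zz)=(\xx\comp_\nn\yy)\op\zz$ and $R(\zz)=\xx\op(\yy\op\zz)$ and show that for $\zz<2^\nn$ both obey the \emph{same} recurrence $F(\zz+1)=F(\zz)\op F(1)$: indeed $\zz+1=\zz\op 1$, so for $L$ this is a single instance of~\eqref{E:Laver2}, while for $R$ it follows from two applications of~\eqref{E:LD} (first $\yy\op(\zz\op 1)=(\yy\op\zz)\op(\yy\op 1)$, then distributing $\xx\op(\cdot)$ over the result). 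Since $L(1)=R(1)$ by the previous paragraph, an immediate induction on~$\zz$ yields $L\equiv R$, that is, $\comp_\nn$ satisfies~\eqref{E:Monoid} in full; combined with uniqueness this identifies $\comp=\comp_\nn$.

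Finally, neutrality and associativity fall out formally. Neutrality: $\xx\comp_\nn 2^\nn=\xx$ holds by definition, while $2^\nn\comp_\nn\xx=\xx$ follows from the identity $2^\nn\OP\nn\qq=\qq$ of~\eqref{E:LastRows}. Associativity: using~\eqref{E:Monoid} twice on each side, both $((\xx\comp_\nn\yy)\comp_\nn\zz)\op\ww$ and $(\xx\comp_\nn(\yy\comp_\nn\zz))\op\ww$ reduce to $\xx\op(\yy\op(\zz\op\ww))$; taking $\ww=1$ and invoking injectivity of $\cdot\op 1$ once more gives $(\xx\comp_\nn\yy)\comp_\nn\zz=\xx\comp_\nn(\yy\comp_\nn\zz)$.
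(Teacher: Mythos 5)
Your proposal is correct and follows essentially the paper's own route: uniqueness comes from the $\zz=1$ instance of~\eqref{E:Monoid} plus injectivity of the successor map $\ww\mapsto\ww\op 1$, the definition~\eqref{E:DefComp} is recognized as exactly that instance, existence is obtained by propagating the $\zz=1$ case to all~$\zz$, and associativity and neutrality are derived just as in the paper. The only cosmetic difference is in the propagation step: the paper packages it as ``left translations $\ad_\pp$ are endomorphisms (by~\eqref{E:LD}) and $1$ generates~$\AA_\nn$, so two endomorphisms agreeing at~$1$ agree everywhere,'' while you unroll that same mechanism as an explicit induction on~$\zz$ via the recurrence $F(\zz+1)=F(\zz)\op F(1)$.
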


\begin{proof}
Let us first observe that the two cases in~\eqref{E:DefComp} merge into the single formula
\begin{equation}
\label{E:DefCompBis}
\pp \comp_\nn \qq + 1 = \pp \OP\nn (\qq+1) \MOD{}{2^\nn},
\end{equation}
which is equivalent to 
\begin{equation}
\label{E:Monoid1}
(\pp \comp_\nn \qq) \OP\nn 1 = \pp \OP\nn (\qq \OP\nn 1).
\end{equation}
As~\eqref{E:Monoid1} is a particular case of~\eqref{E:Monoid}, we deduce that $\comp_\nn$ is the unique operation possibly obeying~\eqref{E:Monoid}.

Next, for $1 \le \pp \le 2^\nn$, let $\ad_\pp$ be the left-translation of~$\AA_\nn$ associated with~$\pp$, that is, the function $\yy \mapsto\nobreak \pp \OP\nn \yy$. The form of the left-selfdistributivity law implies that every map~$\ad_\pp$ is an endomorphism of~$\AA_\nn$ as we have
$$\ad_\pp (\xx \OP\nn \yy) = \pp \OP\nn (\xx \OP\nn \yy) = (\pp \OP\nn \xx) \OP\nn (\pp \OP\nn \yy) = \ad_\pp(\xx) \OP\nn \ad_\pp(\yy)$$
(a property that is not specific to Laver tables). 

Now, \eqref{E:Monoid1} rewrites into $\ad_{\pp \comp_\nn \qq}(1) = \ad_\pp(\ad_\qq (1))$. As each of $\ad_{\pp \comp_\nn \qq}$, $\ad_\pp$, and $\ad_\qq$ is an endomorphism of~$\AA_\nn$ and $1$ generates~$\AA_\nn$, the equality of~$\ad_{\pp \comp_\nn \qq}$ and $\ad_\pp \comp \ad_\qq$ on~$1$ implies their equality everywhere, that is, for all $\pp, \qq$, we have 
\begin{equation}
\label{E:Comp}
\ad_{\pp \comp_\nn \qq} = \ad_\pp \comp \ad_\qq.
\end{equation}
This means that $(\pp \comp_\nn \qq) \OP\nn \rr = \pp \OP\nn (\qq \OP\nn \rr)$ holds for all~$\pp, \qq, \rr$, that is, the operations~$\OP\nn$ and~$\comp_\nn$ obey the law~\eqref{E:Monoid}. Thus the existence part of the result is  also  established.

We turn to the associativity of~$\comp_\nn$, which follows from that of composition. Indeed, for all $\pp, \qq, \rr$, \eqref{E:Comp} gives
$$\ad_{\pp \comp_\nn (\qq \comp_\nn \rr)} = \ad_\pp \comp (\ad_\qq \comp \ad_\rr) = (\ad_\pp \comp \ad_\qq) \comp \ad_\rr = \ad_{(\pp \comp_\nn \qq) \comp_\nn \rr}.$$
We deduce in particular $\ad_{\pp \comp_\nn (\qq \comp_\nn \rr)}(1) = \ad_{(\pp \comp_\nn \qq) \comp_\nn \rr}(1)$, which implies $\pp \comp_\nn (\qq \comp_\nn \rr) = (\pp \comp_\nn \qq) \comp_\nn \rr$ since the function $\xx \mapsto \xx \OP\nn 1$ is injective on~$\AA_\nn$.

Finally,  Relations~\eqref{E:DefComp}  and~\eqref{E:LastRows} imply that $2^\nn$ is a neutral element in~$(\AA_\nn, \comp_\nn)$. 
\end{proof}

We refer to Table~\ref{T:Comp} for some examples of the associative operations~$\comp_\nn$. All periodicity phenomena involving the operation~$\OP\nn$ also appear in the table of~$\comp_\nn$. Observe that the monoid $(\AA_\nn, \comp_\nn)$ is not monogenerated for $\nn \ge 2$. It can be mentioned that two other laws connect the operations~$\OP\nn$ and~$\comp_\nn$, namely
$$\xx \comp_\nn \yy = (\xx \OP\nn \yy) \comp_\nn \xx \mbox{\quad and \quad} \xx \OP\nn (\yy \comp_\nn \zz) = (\xx \OP\nn \yy) \comp_\nn (\xx \OP\nn \zz),$$
but these relations will not be used here. Because the operation~$\comp_\nn$ is directly defined from the selfdistributive operation~$\OP\nn$, it adds nothing really new to the structure of Laver tables. 

\begin{table}
\small\begin{tabular}{c|c}
$\comp_0$&$1$\\
\hline
$1$&$1$
\end{tabular}
\quad 
\begin{tabular}{c|cc}
$\comp_1$&$1$&$2$\\
\hline
$1$&$1$&$1$\\
$2$&$1$&$2$\\
\end{tabular}
\quad 
\begin{tabular}{c|cccc}
$\comp_2$&$1$&$2$&$3$&$4$\\
\hline
$1$&$3$&$1$&$3$&$1$\\
$2$&$3$&$2$&$3$&$2$\\
$3$&$3$&$3$&$3$&$3$\\
$4$&$1$&$2$&$3$&$4$\\
\end{tabular}
\quad 
\begin{tabular}{c|cccccccc}
$\comp_3$&$1$&$2$&$3$&$4$&$5$&$6$&$7$&$8$\\
\hline
$1$&$3$&$5$&$7$&$1$&$3$&$5$&$7$&$1$\\
$2$&$3$&$6$&$7$&$2$&$3$&$6$&$7$&$2$\\
$3$&$7$&$3$&$7$&$3$&$7$&$3$&$7$&$3$\\
$4$&$5$&$6$&$7$&$4$&$5$&$6$&$7$&$4$\\
$5$&$7$&$5$&$7$&$5$&$7$&$5$&$7$&$5$\\
$6$&$7$&$6$&$7$&$6$&$7$&$6$&$7$&$6$\\
$7$&$7$&$7$&$7$&$7$&$7$&$7$&$7$&$7$\\
$8$&$1$&$2$&$3$&$4$&$5$&$6$&$7$&$8$\\
\end{tabular}
\vspace{3mm}
\caption{\sf\small The associative operation~$\comp_\nn$ on~$\AA_\nn$. The columns in the table of~$\comp_\nn$ are obtained by shifting the columns of~$\OP\nn$ by one unit to the left, and substracting~$1$ mod~$2^\nn$. In particular, the bottom row and the last column are identities, while the next to last row and column are constant with value~$2^\nn {-} 1$.}
\label{T:Comp} 
\end{table}

We are now ready to establish the existence of the expected ordering.

\begin{proof}[Proof of Proposition~\ref{P:Poset}]
We first show that the relation~$\Before\nn$ is transitive. So assume $\pp\Before\nn \qq \Before\nn \rr$. By definition, there exist~$\qq', \rr'$ satisfying $\qq' \OP\nn \pp = \qq$ and $\rr' \OP\nn \qq = \rr$. Now, applying Lemma~\ref{L:Monoid}, we obtain
$$(\rr' \comp_\nn \qq') \OP\nn \pp =  \rr' \OP\nn (\qq' \OP\nn \pp) = \rr,$$
whence $\pp\Before\nn \rr$.

Once we know that $\Before\nn$ is transitive, we deduce the equivalence
\begin{equation}
\label{E:Equiv}
\pp \Before\nn \qq \quad \Leftrightarrow \quad \Col_\nn(\pp)  \supseteq  \Col_\nn(\qq).
\end{equation}
Indeed, assume $\pp \Before\nn \qq$, and $\rr \in \Col_\nn(\qq)$: by definition, we have $\qq \Before\nn \rr$, whence $\pp\Before\nn\nobreak \rr$ by transitivity, and therefore  $\Col_\nn(\pp)$ contains~$\rr$.  So the relation $\pp \Before\nn \qq$ implies $\Col_\nn(\pp) \supseteq\nobreak \Col_\nn(\qq)$. Conversely, we know that $\qq$ always belongs to~$\Col_\nn(\qq)$, so $\Col_\nn(\pp) \supseteq \Col_\nn(\qq)$ implies that $\Col_\nn(\pp)$ contains~$\qq$, which is $\pp\Before\nn \qq$.

We now deduce that the relation~$\Before\nn$ is antisymmetric. Indeed, assume $\pp \Before\nn \qq$ and $\qq \Before\nn \pp$. Applying~\eqref{E:Equiv}, we deduce $\Col_\nn(\pp) = \Col_\nn(\qq)$, whence $\pp = \qq$ by Lemma~\ref{L:Distinct}. So $\Before\nn$ is a partial ordering on~$\{1 \wdots \nn\}$.

Next, we have $\Col_\nn(1) = \{1 \wdots 2^\nn\}$, hence $1 \Before\nn \pp$ is always satisfied.  On the other hand, for every~$\pp$, we have $(2^\nn-1)\OP\nn \pp = 2^\nn$ due to~\eqref{E:LastRows}, and $\Col_\nn(2^\nn) = \{2^\nn\}$ due to~\eqref{E:LastColumn}, hence $\Col_\nn(\pp) \supseteq \Col_\nn(2^\nn)$ for every~$\pp$, meaning that $\pp \Before\nn 2^\nn$ is always satisfied.
\end{proof}

For every~$\nn \ge 1$, the partial order~$\Before{\nn-1}$ can be obtained from~$\Before\nn$ easily: indeed Relation~\eqref{E:Distinct1} implies that, for $\qq, \rr \le 2^{\nn-1}$, we have
\begin{equation}
\label{E:ProjOrder}
\rr \Before{\nn-1} \qq \quad\Leftrightarrow\quad (\rr + 2^{\nn-1}) \Before\nn (\qq + 2^{\nn-1}).
\end{equation}
In the other direction, recovering~$\Before\nn$ from~$\before_{\nn-1}$ is more problematic: \eqref{E:ProjOrder} determines~$\Before\nn$ on $\{2^{\nn-1}+1 \wdots 2^\nn\}$, and Relation~\eqref{E:DistinctPrecise} implies that,  for $\qq \le 2^{\nn-1}$,
\begin{equation}
\label{E:ProjOrder2}
\qq \Before\nn \qq + 2^{\nn-1}
\end{equation}
holds and, more precisely, that $\qq$ is an immediate predecessor of~$\qq +\nobreak 2^{\nn-1}$, but this says nothing about the way the predecessors of~$\qq + 2^{\nn-1}$ lying in $\{2^{\nn-1}+\nobreak 1 \wdots 2^\nn\}$ connect with~$\qq$. Figure~\ref{F:Poset} shows that these connections can take very different forms. However, the analysis can be completed in the particular case of the final elements of~$(\AA_\nn, \Before\nn)$:

\begin{prop}
\label{P:MinusOne}
For $\nn \ge 2$ and $1 \le \pp < 2^\nn$ with $\pp \not= 2^{\nn-1}$, we have 
\begin{equation}
\label{E:MinusOne}
\pp \Before\nn 2^\nn - 2^{\nn-2} \Before\nn 2^{\nn-1} \Before\nn 2^\nn.
\end{equation}
\end{prop}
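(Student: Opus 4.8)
The plan is to work entirely with the columns $\Col_\nn$ and the equivalence $\pp\Before\nn\qq\Leftrightarrow\Col_\nn(\pp)\supseteq\Col_\nn(\qq)$ from~\eqref{E:Equiv}. Throughout write $\mm$ for the element $2^\nn{-}2^{\nn-2}=2^{\nn-1}{+}2^{\nn-2}$, which lies in the upper half with $\MOD\mm{2^{\nn-1}}=2^{\nn-2}$. The rightmost relation $2^{\nn-1}\Before\nn 2^\nn$ is immediate, since $2^\nn$ is final for~$\Before\nn$ by Proposition~\ref{P:Poset}. My plan is to reduce the remaining two relations to a single pointwise identity,
\begin{equation*}
2^{\nn-2}\OP\nn 2^{\nn-2}=2^{\nn-1},
\end{equation*}
together with the auxiliary statement $\mathrm{L}(\nn)$: \emph{every $\pp$ with $1\le\pp<2^\nn$ satisfies $\pp\Before\nn 2^{\nn-1}$}, i.e.\ $2^{\nn-1}$ sits just below the top element $2^\nn$.

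I would first dispatch the two relations assuming the identity and $\mathrm{L}(\nn-1)$. For $\pp\Before\nn\mm$ with $\pp\neq 2^{\nn-1}$ and $\pp<2^\nn$, one has $\MOD\pp{2^{\nn-1}}<2^{\nn-1}$, so, as $\mm$ lies in $\{2^{\nn-1}{+}1\wdots 2^\nn\}$, formula~\eqref{E:Distinct1} rewrites $\mm\in\Col_\nn(\pp)$ as the equivalent condition $2^{\nn-2}\in\Col_{\nn-1}(\MOD\pp{2^{\nn-1}})$, which is exactly $\mathrm{L}(\nn-1)$ applied to $\MOD\pp{2^{\nn-1}}$. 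For $\mm\Before\nn 2^{\nn-1}$, I would use~\eqref{E:DistinctPrecise} in the form $\Col_\nn(2^{\nn-2})=\Col_\nn(\mm)\sqcup\{2^{\nn-2}\}$: the identity says $2^{\nn-1}\in\Col_\nn(2^{\nn-2})$, and since $2^{\nn-1}\neq 2^{\nn-2}$ this value must already lie in $\Col_\nn(\mm)$. Transitivity of~$\Before\nn$ then chains $\pp\Before\nn\mm\Before\nn 2^{\nn-1}$ for every $\pp\neq 2^{\nn-1}$ (the case $\pp=2^{\nn-1}$ being reflexive), which is precisely $\mathrm{L}(\nn)$. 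Thus $\mathrm{L}$ follows for all levels by induction, the base case $\mathrm{L}(1)$ being trivial, and the three relations of the Proposition are the two displayed reductions plus the final element.

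The heart of the matter is the identity $2^{\nn-2}\OP\nn 2^{\nn-2}=2^{\nn-1}$. I would first record the behaviour of the ``half'' element: for every $\kk\ge 1$, Lemma~\ref{L:Valuation} (with $\dd=\kk{-}1$) forces the last $2^{\kk-1}$ entries of the row of $2^{\kk-1}$ in~$\AA_\kk$ to be the consecutive values $2^{\kk-1}{+}1\wdots 2^\kk$; combined with~\eqref{E:LeftMultiples} this gives $\per_\kk(2^{\kk-1})=2^{\kk-1}$ and $2^{\kk-1}\OP\kk\qq=\qq+2^{\kk-1}$ for $\qq\le 2^{\kk-1}$. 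Reading this in $\AA_{\nn-1}$ yields $\per_{\nn-1}(2^{\nn-2})=2^{\nn-2}$ and $2^{\nn-2}\OP{\nn-1}2^{\nn-2}=2^{\nn-1}$, so in $\AA_{\nn-1}$ the element $2^{\nn-2}$ reaches the maximum exactly at the end of its period. It then remains to see that this period-end value lifts to $2^{\nn-1}$ rather than $2^\nn$, equivalently that the period doubles, $\per_\nn(2^{\nn-2})=2^{\nn-1}$. For this I would argue by contradiction: if the period stayed $2^{\nn-2}$, periodicity would equate the entry at position $\mm{+}1\equiv 1\pmod{2^{\nn-2}}$ with the entry at position~$1$, namely $2^{\nn-2}{+}1$ by~\eqref{E:Laver1}; but Lemma~\ref{L:Valuation} makes $\mm{+}1$ a fixed point of value $\mm{+}1$, and $2^{\nn-2}{+}1\neq\mm{+}1$. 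Hence the period doubles, so $\thres_\nn(2^{\nn-2})=\per_{\nn-1}(2^{\nn-2})=2^{\nn-2}$, and since the position $2^{\nn-2}$ lies in the unshifted range, Proposition~\ref{P:Proj} gives $2^{\nn-2}\OP\nn 2^{\nn-2}=2^{\nn-2}\OP{\nn-1}2^{\nn-2}=2^{\nn-1}$.

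The step I expect to be delicate is exactly this period computation. Everything else is bookkeeping with~\eqref{E:Distinct1}, \eqref{E:DistinctPrecise}, and transitivity, but pinning down $\per_\nn(2^{\nn-2})$---showing that the period genuinely doubles---is where the specific arithmetic of Laver tables, through Lemma~\ref{L:Valuation} and the threshold description following Proposition~\ref{P:Proj}, has to be used carefully; the contradiction argument above is the way I would make that precise.
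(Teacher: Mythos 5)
Your proof is correct, and its skeleton coincides with the paper's: induction on~$\nn$, the relation $2^{\nn-1} \Before\nn 2^\nn$ from finality, the middle relation from pinning down the row of~$2^{\nn-2}$ in~$\AA_\nn$, and $\pp \Before\nn 2^\nn{-}2^{\nn-2}$ by transporting the induction hypothesis through the projection onto~$\AA_{\nn-1}$. The execution differs at three points, and the comparison is worth recording. (a)~For the crucial fact $\per_\nn(2^{\nn-2}) = 2^{\nn-1}$, you invoke the stays-or-doubles dichotomy from the discussion following Proposition~\ref{P:Proj} and rule out ``stays'' by a contradiction at the fixed point $2^\nn{-}2^{\nn-2}{+}1$ given by Lemma~\ref{L:Valuation}; the paper instead counts distinct values in the row---the $2^{\nn-2}$ fixed points of Lemma~\ref{L:ValuationPlus} plus the first entry $2^{\nn-2}{+}1$ force $\per_\nn(2^{\nn-2}) \ge 2^{\nn-1}$, and $2^\nn$ is excluded since $2^{\nn-2} \OP\nn 1 \neq 1$---which is slightly more self-contained, as it bypasses the threshold dichotomy (established only in prose in the paper). (b)~You then place $2^{\nn-1}$ in the column of $2^\nn{-}2^{\nn-2}$ by computing $2^{\nn-2} \OP\nn 2^{\nn-2} = 2^{\nn-1}$ via the threshold and transferring with~\eqref{E:DistinctPrecise}; the paper computes $2^{\nn-2} \OP\nn (2^\nn{-}2^{\nn-2}) = 2^{\nn-1}$ directly by projection. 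Since $2^\nn{-}2^{\nn-2} \equiv 2^{\nn-2} \pmod{2^{\nn-1}}$, these are the same table entry, so the two routes are equivalent in substance. (c)~Where your version is genuinely cleaner is the first relation: \eqref{E:Distinct1} converts $\pp \Before\nn 2^\nn{-}2^{\nn-2}$ into $\MOD\pp{2^{\nn-1}} \Before{\nn-1} 2^{\nn-2}$ uniformly in~$\pp$, whereas the paper must split into the cases $\pp > 2^{\nn-1}$ (via~\eqref{E:ProjOrder}) and $\pp < 2^{\nn-1}$ (via~\eqref{E:ProjOrder2} plus transitivity). Finally, organizing the induction around your auxiliary statement~$\mathrm{L}(\nn)$ rather than around the proposition itself changes nothing essential: $\mathrm{L}(\nn{-}1)$ is exactly the form in which the paper, too, consumes its induction hypothesis (after an implicit appeal to transitivity, and to reflexivity at~$2^{\nn-2}$).
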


\begin{proof}
We use induction on~$\nn \ge 2$. For $\nn = 2$, we have $1 \Before2 3 \Before2 2 \Before2 4$, and the result is true. Assume $\nn \ge 3$.  First, we know already that $2^{\nn-1} \Before\nn 2^\nn$ holds. Next, consider the row of~$2^{\nn-2}$ in~$\AA_\nn$. By Lemma~\ref{L:ValuationPlus}, the last values are $2^{\nn-2}$ consecutive numbers increasing from $2^\nn - 2^{\nn-2} + 1$ to~$2^\nn$, whereas the first value is~$2^{\nn-2}+1$: hence, at least $2^{\nn-2}+1$ different values appear on this row, and we deduce $\per_\nn(2^{\nn-2}) \ge 2^{\nn-1}$, whence  $\per_\nn(2^{\nn-2})  = 2^{\nn-1}$ since $\per_\nn(2^{\nn-2}) = 2^\nn$ would require $2^{\nn-2} \OP\nn 1 = 1$. We deduce $2^{\nn-2} \OP\nn (2^\nn - 2^{\nn-2}) < 2^\nn$, which, owing to the obvious equality $2^{\nn-2} \OP{\nn-1} 2^{\nn-2} = 2^{\nn-1}$, implies
\begin{equation}
\label{E:MinusTwo}
2^{\nn-2} \OP\nn (2^\nn - 2^{\nn-2}) = 2^{\nn-1}.
\end{equation}
So $2^{\nn-1}$ appears in the column of $2^\nn - 2^{\nn-2}$, and the relation $2^\nn - 2^{\nn-2} \Before\nn 2^{\nn-1}$ is satisfied.

Now consider $\pp < 2^\nn$, $\pp \not= 2^{\nn-1}$. Assume first $\pp > 2^{\nn-1}$, and let $\bar\pp = \MOD\pp{2^{\nn-1}}$. Then $\bar\pp$ is not~$2^{\nn-1}$ so, by induction hypothesis, we have $\bar\pp \Before{\nn-1} 2^{\nn-2}$. By~\eqref{E:ProjOrder}, we deduce $\bar\pp + 2^{\nn-1} \Before\nn 2^{\nn-2} + 2^{\nn-1}$, which is also $\pp \Before\nn 2^\nn - 2^{\nn-2}$, as expected. Assume now $\pp < 2^{\nn-1}$. Then \eqref{E:ProjOrder2} implies $\pp \Before\nn \pp + 2^{\nn-1}$. We proved above $\pp + 2^{\nn-1} \Before\nn 2^{\nn-2} + 2^{\nn-1}$. Using the transitivity of~$\Before\nn$, we deduce $\pp \Before\nn 2^\nn - 2^{\nn-2}$ again.
\end{proof}

The previous result is, in a sense, optimal. Indeed, Proposition~\ref{P:MinusOne} says that the relation~$\Before\nn$ always admits a length~$3$ tail made of $2^\nn - 2^{\nn-2}$, $2^{\nn-1}$, and~$2^\nn$, but we cannot expect more:  with respect to~$\before_3$,  the last three elements are $6$, $4$, and~$8$, but $6$ has two incomparable immediate predecessors, namely~$2$ and~$7$.

Finally, at the other end, the fact that every period but the last one in~$\AA_\nn$ divides~$2^{\nn-1}$ implies that $\Col_\nn(2^{\nn-1} {+} 1)$ is $\{2 \wdots 2^\nn\}$, hence it contains every element but~$1$. Thus $2^{\nn-1}{+}1 \Before\nn \pp$ holds for $2 \le \pp \le 2^\nn$, again an optimal result as, in~$\AA_3$, the element~$5$ admits two incomparable immediate successors, namely~$2$ and~$3$ (as can be seen in Figure~\ref{F:Poset}).

 To conclude the section, we point out another application of Relations~\eqref{E:Distinct1} and~\eqref{E:ProjOrder}.

\begin{coro}
\label{C:Occurrences}
Assume $\mm \le \nn$, $1 \le \qq \le 2^\nn$, and $1 \le \rr < 2^\mm$. Then $2^\nn - \rr$ appears in the $\qq$th column of~$\AA_\nn$ if and only if $2^\mm - \rr$ appears in the $\bar\qq$th column of~$\AA_\mm$, with $\bar\qq = \MOD\qq{2^\mm}$.
\end{coro}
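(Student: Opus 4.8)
The plan is to prove both implications simultaneously by induction on the difference $\nn - \mm$, stripping off one level of the Laver tables at each step by means of the top-half column description recorded in~\eqref{E:Distinct1}. First I would recast the statement in terms of columns: the phrase ``$2^\nn - \rr$ appears in the $\qq$th column of~$\AA_\nn$'' means exactly $2^\nn - \rr \in \Col_\nn(\qq)$, equivalently $\qq \Before\nn (2^\nn - \rr)$. The base case $\nn = \mm$ is immediate, since then $\MOD\qq{2^\mm} = \qq$ and the two sides of the asserted equivalence literally coincide.

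For the inductive step, assume $\nn > \mm$ and set $\qq_1 = \MOD\qq{2^{\nn-1}}$. The key observation is a range estimate: from $\rr < 2^\mm \le 2^{\nn-1}$ we get $2^\nn - \rr > 2^{\nn-1}$, so the target value $2^\nn - \rr$ lies in the top half $\{2^{\nn-1}+1 \wdots 2^\nn\}$. This is precisely the regime governed by~\eqref{E:Distinct1}, which identifies $\Col_\nn(\qq) \cap \{2^{\nn-1}+1 \wdots 2^\nn\}$ with $\Col_{\nn-1}(\qq_1) + 2^{\nn-1}$. Subtracting $2^{\nn-1}$ then yields the reduction
$$2^\nn - \rr \in \Col_\nn(\qq) \quad\Longleftrightarrow\quad 2^{\nn-1} - \rr \in \Col_{\nn-1}(\qq_1),$$
which is an instance of the same statement one level down, with parameters $\nn-1, \mm, \qq_1, \rr$; these satisfy the hypotheses since $\mm \le \nn-1$, $1 \le \qq_1 \le 2^{\nn-1}$, and $\rr < 2^\mm$.

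It then remains to apply the induction hypothesis to the right-hand side and to check that the residues match: because $2^\mm$ divides $2^{\nn-1}$, reducing first modulo $2^{\nn-1}$ and then modulo $2^\mm$ gives the same result as reducing modulo $2^\mm$ directly, i.e.\ $\MOD{\qq_1}{2^\mm} = \MOD\qq{2^\mm} = \bar\qq$. Combining the reduction with the induction hypothesis closes the argument. I expect the only genuinely delicate point to be the range bookkeeping in the inductive step---ensuring that $2^\nn - \rr$ really falls in the upper half so that~\eqref{E:Distinct1} is applicable; everything else is formal. If preferred, one may phrase the same step through the order relation using~\eqref{E:ProjOrder} in place of~\eqref{E:Distinct1}, but the column formulation is the most direct.
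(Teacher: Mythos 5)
Your proof is correct and follows essentially the same route as the paper: the paper's proof is exactly a ``straightforward induction on $\nn-\mm$'' using the half-column relation (the paper cites~\eqref{E:ProjOrder}, which it derives from~\eqref{E:Distinct1}, the formulation you use). Your write-up simply fills in the details the paper leaves implicit, and your choice of~\eqref{E:Distinct1} is apt, since it handles the cases $\qq\le 2^{\nn-1}$ and $\qq>2^{\nn-1}$ uniformly.
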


\begin{proof}
 Owing to~\eqref{E:ProjOrder}, the result follows from a straightforward induction on~$\nn -\nobreak \mm$. 
\end{proof}

\begin{exam}
As $1$ occurs in the first column of~$\AA_1$ and not in the second one, we deduce that 
\begin{equation}
\label{E:Constraint}
\mbox{$2^\nn - 1$ occurs in the $\qq$th column of~$\AA_\nn$ exactly for $\qq = 1 \MOD{}2$.}
\end{equation}
Similarly looking at the occurrences of $4-\rr$ in~$\AA_2$, at those of $8 - \rr$ in~$\AA_3$, \textit{etc}. leads to the constraints indicated in the following array, where we use $\MODD\qq{\mm}$ for~$\MOD\qq{\mm}$, the meaning being that $2^\nn - \rr$ occurs in the $\qq$th column of~$\AA_\nn$ if and only if the corresponding constraint is satisfied by~$\qq$:
$$\begin{tabular}{c|c|c|c|c|c|c|c|c}
$\rr$&$1$&$2$&$3$&$4$&$5$&$6$&$7$&$8$\\
\hline
\VR(4,0)$\qq$&$= \MODD12$&$\not= \MODD44$&$= \MODD14$&$\not= \MODD88$&$= \MODD{1,3,5}8$&$= \MODD{1,2,5}8$&$= \MODD18$&$\not= \MODD{16}{16}$
\end{tabular}$$

Another way of stating~\eqref{E:Constraint} is the equivalence
\begin{equation}
\label{E:Constraint1}
\qq \Before\nn 2^\nn - 1 \mbox{\quad $\Longleftrightarrow$ \quad} \qq = 1 \MOD{}2
\end{equation}
(and similarly for every value of~$\rr$); in this equivalence, the left-to-right implication follows from Corollary~\ref{C:Odd},  but the converse implication is less trivial. In particular, we note that the map $\pp \mapsto 2\pp - 1$ need not be increasing from $(\AA_{\nn-1}, \Before{\nn-1})$ to~$(\AA_\nn, \Before\nn)$: for instance, $3 \before_3 2$ fails (as does $2 \before_3 3$) but $5 \before_4 3$ holds. This illustrates the fact that the restriction of the partial ordering~$\Before\nn$ to~$\{1 \wdots 2^{\nn-1}\}$ does not follow from the partial ordering~$\Before{\nn-1}$ simply.
\end{exam}

\section{An enhanced basis for $2$-cocycles}
\label{S:TwoPsi}

We now return to the investigation of $2$-cocycles for Laver tables, and use the partial ordering of Section~\ref{S:Poset} to exhibit an alternative basis for the abelian group~$\ZZ^2(\AA_\nn)$. From a combinatorial point of view, this new basis seems more interesting in that it consists of cocycles that only take values~$0$ and~$1$ on~$\AA_\nn^2$, contrary to the basis of Proposition~\ref{P:Basis1} in which negative values appear. 

\begin{prop}
\label{P:Basis2}
For $1 \le \qq \le 2^\nn$ define $\psi_{\qq, \nn} = \sum_{\rr \Before\nn \qq} \phi_{\rr, \nn}$. Then the family $\{\psi_{1, \nn} \wdots \psi_{2^\nn-1, \nn}, \const_\nn\}$ is a basis of~$\ZZ^2(\AA_\nn)$,  and we have 
\begin{equation}
\label{E:Basis2}
\psi_{\qq, \nn}(\xx, \yy) = \begin{cases}
\ 1&\mbox{if $\qq$ belongs to~$\Col_\nn(\yy)$ but not to~$\Col_\nn(\xx \OP\nn \yy)$},\\
\ 0&\mbox{otherwise}.\end{cases}
\end{equation}
\end{prop}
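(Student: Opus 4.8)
The plan is to handle the explicit formula~\eqref{E:Basis2} first and then deduce the basis property from Proposition~\ref{P:Basis1} by a triangularity argument. To obtain the formula I would expand the definition $\psi_{\qq, \nn} = \sum_{\rr \Before\nn \qq} \phi_{\rr, \nn}$ using~\eqref{E:phi}, which gives
$$\psi_{\qq, \nn}(\xx, \yy) = \sum_{\rr \Before\nn \qq} \bigl( \delta_{\yy, \rr} - \delta_{\xx \OP\nn \yy, \rr} \bigr).$$
Since each Kronecker symbol singles out one index, the first partial sum equals $1$ exactly when $\yy \Before\nn \qq$ and $0$ otherwise, and the second equals $1$ exactly when $(\xx \OP\nn \yy) \Before\nn \qq$ and $0$ otherwise. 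Recalling that $\aa \Before\nn \bb$ means $\bb \in \Col_\nn(\aa)$, this says that $\psi_{\qq, \nn}(\xx, \yy)$ is the difference of the truth values of $\qq \in \Col_\nn(\yy)$ and $\qq \in \Col_\nn(\xx \OP\nn \yy)$.

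The only nontrivial input is the inclusion $\Col_\nn(\xx \OP\nn \yy) \subseteq \Col_\nn(\yy)$, which comes for free from Section~\ref{S:Poset}: as $\xx \OP\nn \yy$ lies in $\Col_\nn(\yy)$ by the very definition of~$\Col_\nn$, we have $\yy \Before\nn (\xx \OP\nn \yy)$, and the equivalence~\eqref{E:Equiv} (itself resting on the transitivity established in Proposition~\ref{P:Poset}) upgrades this to $\Col_\nn(\yy) \supseteq \Col_\nn(\xx \OP\nn \yy)$. With this inclusion, a three-case split yields~\eqref{E:Basis2}: when $\qq \in \Col_\nn(\xx \OP\nn \yy)$ both truth values are $1$, so the difference is $0$; when $\qq \notin \Col_\nn(\yy)$ the inclusion forces $\qq \notin \Col_\nn(\xx \OP\nn \yy)$ too, so both are $0$; and the value $1$ arises precisely in the remaining case $\qq \in \Col_\nn(\yy)$ with $\qq \notin \Col_\nn(\xx \OP\nn \yy)$, which is exactly the condition recorded in~\eqref{E:Basis2}.

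For the basis property I would exhibit the passage from $\{\phi_{\rr, \nn}\}_{1 \le \rr < 2^\nn}$ to $\{\psi_{\qq, \nn}\}_{1 \le \qq < 2^\nn}$ as an invertible integral change of basis and then cite Proposition~\ref{P:Basis1}. First one checks that $\{1 \wdots 2^\nn - 1\}$ is an order ideal for~$\Before\nn$: if $\qq < 2^\nn$ and $\rr \Before\nn \qq$, then $\qq \in \Col_\nn(\rr)$, so $\rr = 2^\nn$ would give $\qq \in \Col_\nn(2^\nn) = \{2^\nn\}$ by~\eqref{E:LastColumn}, contradicting $\qq < 2^\nn$; hence $\rr < 2^\nn$. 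Thus for $\qq < 2^\nn$ the sum defining $\psi_{\qq, \nn}$ only involves $\phi_{\rr, \nn}$ with $\rr < 2^\nn$, and the transition matrix $M$, with $M_{\qq, \rr} = 1$ iff $\rr \Before\nn \qq$, is the zeta matrix of the finite poset $(\{1 \wdots 2^\nn - 1\}, \Before\nn)$. Indexing along a linear extension of this partial order makes $M$ triangular with $1$'s on the diagonal: the off-diagonal entries on the wrong side vanish by the choice of extension, while the diagonal entries are $1$ by reflexivity $\qq = 2^\nn \OP\nn \qq$ (from~\eqref{E:LastRows}). Hence $\det M = 1$, so $M$ is invertible over~$\ZZZZ$ and $\{\psi_{1, \nn} \wdots \psi_{2^\nn - 1, \nn}\}$ is again a $\ZZZZ$-basis of $\BB^2(\AA_\nn)$; adjoining $\const_\nn$, which enlarges $M$ only by a trivial $1 \times 1$ block, produces a basis of $\ZZ^2(\AA_\nn)$.

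I expect the difficulty to be organisational rather than conceptual: the substantive facts—transitivity, antisymmetry, and reflexivity of~$\Before\nn$, together with the equivalence~\eqref{E:Equiv}—are all already available from Proposition~\ref{P:Poset}, so the main care needed is to confirm that the index $2^\nn$ never intervenes (so that Proposition~\ref{P:Basis1}, phrased only for $\rr < 2^\nn$, genuinely applies) and that the partial-order structure of~$\Before\nn$ renders the zeta matrix triangular with unit diagonal, which is what ultimately guarantees invertibility over~$\ZZZZ$.
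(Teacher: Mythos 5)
Your proof is correct and follows essentially the same route as the paper: a triangular change-of-basis argument relative to a linear extension of~$\Before\nn$ (resting on Proposition~\ref{P:Poset} and Proposition~\ref{P:Basis1}) for the basis property, and an expansion into Kronecker deltas combined with transitivity of~$\Before\nn$ (in your phrasing, the inclusion $\Col_\nn(\xx \OP\nn \yy) \subseteq \Col_\nn(\yy)$ via~\eqref{E:Equiv}) to rule out negative values and obtain~\eqref{E:Basis2}. Your explicit verification that $\{1 \wdots 2^\nn - 1\}$ is an order ideal for~$\Before\nn$, so that $\phi_{2^\nn, \nn}$ never enters the sums defining $\psi_{\qq,\nn}$ with $\qq < 2^\nn$, is a detail the paper leaves implicit and is a welcome precision.
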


\begin{proof}
By Proposition~\ref{P:Basis1}, $\{\phi_{1, \nn} \wdots \phi_{2^\nn-1, \nn}, \const_\nn\}$ is a basis of~$\ZZ^2(\AA_\nn)$. Now, the fact that, according to Proposition~\ref{P:Poset}, the relation~$\Before\nn$ is a partial order on~$\{1 \wdots 2^\nn\}$ implies that the families $\{\phi_{1, \nn} \wdots \phi_{2^\nn-1, \nn}, \const_\nn\}$ and $\{\psi_{1, \nn} \wdots \psi_{2^\nn-1, \nn}, \const_\nn\}$ are connected by a triangular matrix with diagonal entries equal to~$1$ (and all entries equal to~$0$ or~$1$): indeed, such a matrix arises whenever $\{1 \wdots 2^\nn\}$ is re-ordered according to any linear ordering that extends~$\Before\nn$. Hence $\{\psi_{1, \nn} \wdots \psi_{2^\nn-1, \nn}, \const_\nn\}$ is a basis of~$\ZZ^2(\AA_\nn)$---and $\{\psi_{1, \nn} \wdots \psi_{2^\nn-1, \nn}\}$  is a basis of~$\BB^2(\AA_\nn)$.    

Let us turn to the values of the cocycles~$\psi_{\qq, \nn}$. For  every $\qq$, we have by definition
\begin{equation}
\label{E:Basis2'}
\psi_{\qq, \nn}(\xx, \yy) = \sum_{\rr \Before\nn \qq} \delta_{\rr, \yy} - \sum_{\rr \Before\nn \qq} \delta_{\rr, \xx \op \yy}. 
\end{equation}
Clearly, at most one value of~$\rr$ may contribute to each sum in~\eqref{E:Basis2'}, so the value can only be $-1$, $0$, or~$1$. Now, assume that the contribution of the second sum is~$-1$. This means that we have $\xx \OP\nn \yy = \rr$ for some~$\rr$ satisfying $\rr \Before\nn \qq$, hence that $\xx \OP\nn \yy \Before\nn \qq$ holds. By definition, we have $\yy \Before\nn \xx \OP\nn \yy$. Since $\Before\nn$ is transitive, we deduce $\yy \Before\nn \qq$,  implying  that the contribution of the first sum is~$+1$.  So,  the global value of $\psi_{\qq, \nn}(\xx, \yy)$ cannot be negative.

Now, Relation~\eqref{E:Basis2} is clear since $\psi_{\qq, \nn}(\xx, \yy)$ is $1$ exactly when the contribution of the first sum in~\eqref{E:Basis2'} is~$1$, that is, when $\yy \Before\nn \qq$ is true, and that of the second sum is~$0$, that is, when $\xx \OP\nn \yy \Before\nn \qq$ is false.
\end{proof}

Note that, because $\rr \Before\nn 2^\nn$ is true for every~$\rr$, \eqref{E:Basis2'} implies that $\psi_{2^\nn, \nn}$ is the zero cocycle. We refer to Table~\ref{T:Psi} for a list of the seven nontrivial  cocycles $\psi_{\qq, 3}$ corresponding to~$\AA_3$.

\begin{table}[htb]
\def\arrayitem{2.4mm}
\small\small
\begin{tabular}{c|c}
\VR(0,1.5)\hidewidth{$\psi_{1,3}$}&\1\2\3\4\5\6\7\8\\
\hline
\VR(3,0)$1$&\1\0\0\0\0\0\0\0\\
$2$&\1\0\0\0\0\0\0\0\\
$3$&\1\0\0\0\0\0\0\0\\
$4$&\1\0\0\0\0\0\0\0\\
$5$&\1\0\0\0\0\0\0\0\\
$6$&\1\0\0\0\0\0\0\0\\
$7$&\1\0\0\0\0\0\0\0\\
$8$&\0\0\0\0\0\0\0\0
\end{tabular}
\quad
\begin{tabular}{c|c}
\VR(0,1.5)\hidewidth{$\psi_{2,3}$}&\1\2\3\4\5\6\7\8\\
\hline
\VR(3,0)$1$&\0\1\0\0\0\0\0\0\\
$2$&\1\1\0\0\1\0\0\0\\
$3$&\1\1\0\0\1\0\0\0\\
$4$&\0\1\0\0\0\0\0\0\\
$5$&\1\1\0\0\1\0\0\0\\
$6$&\1\1\0\0\1\0\0\0\\
$7$&\1\1\0\0\1\0\0\0\\
$8$&\0\0\0\0\0\0\0\0
\end{tabular}
\quad
\begin{tabular}{c|c}
\VR(0,1.5)\hidewidth{$\psi_{3,3}$}&\1\2\3\4\5\6\7\8\\
\hline
\VR(3,0)$1$&\1\0\1\0\1\0\0\0\\
$2$&\0\0\1\0\0\0\0\0\\
$3$&\1\0\1\0\1\0\0\0\\
$4$&\0\0\1\0\0\0\0\0\\
$5$&\1\0\1\0\1\0\0\0\\
$6$&\1\0\1\0\1\0\0\0\\
$7$&\1\0\1\0\1\0\0\0\\
$8$&\0\0\0\0\0\0\0\0
\end{tabular}
\quad
\begin{tabular}{c|c}
\VR(0,1.5)\hidewidth{$\psi_{4,3}$}&\1\2\3\4\5\6\7\8\\
\hline
\VR(3,0)$1$&\0\0\0\1\0\0\0\0\\
$2$&\0\0\0\1\0\0\0\0\\
$3$&\0\1\0\1\0\1\0\0\\
$4$&\0\0\0\1\0\0\0\0\\
$5$&\0\1\0\1\0\1\0\0\\
$6$&\0\1\0\1\0\1\0\0\\
$7$&\1\1\1\1\1\1\1\0\\
$8$&\0\0\0\0\0\0\0\0
\end{tabular}

\VR(1,0)

\begin{tabular}{c|c}
\VR(0,1.5)\hidewidth{$\psi_{5,3}$}&\1\2\3\4\5\6\7\8\\
\hline
\VR(3,0)$1$&\1\0\0\0\1\0\0\0\\
$2$&\1\0\0\0\1\0\0\0\\
$3$&\1\0\0\0\1\0\0\0\\
$4$&\0\0\0\0\0\0\0\0\\
$5$&\1\0\0\0\1\0\0\0\\
$6$&\1\0\0\0\1\0\0\0\\
$7$&\1\0\0\0\1\0\0\0\\
$8$&\0\0\0\0\0\0\0\0
\end{tabular}
\quad
\begin{tabular}{c|c}
\VR(0,1.5)\hidewidth{$\psi_{6,3}$}&\1\2\3\4\5\6\7\8\\
\hline
\VR(3,0)$1$&\0\1\0\0\0\1\0\0\\
$2$&\0\1\0\0\0\1\0\0\\
$3$&\1\1\1\0\1\1\1\0\\
$4$&\0\0\0\0\0\0\0\0\\
$5$&\0\1\0\0\0\1\0\0\\
$6$&\0\1\0\0\0\1\0\0\\
$7$&\1\1\1\0\1\1\1\0\\
$8$&\0\0\0\0\0\0\0\0
\end{tabular}
\quad
\begin{tabular}{c|c}
\VR(0,1.5)\hidewidth{$\psi_{7,3}$}&\1\2\3\4\5\6\7\8\\
\hline
\VR(3,0)$1$&\1\0\1\0\1\0\1\0\\
$2$&\0\0\0\0\0\0\0\0\\
$3$&\1\0\1\0\1\0\1\0\\
$4$&\0\0\0\0\0\0\0\0\\
$5$&\1\0\1\0\1\0\1\0\\
$6$&\0\0\0\0\0\0\0\0\\
$7$&\1\0\1\0\1\0\1\0\\
$8$&\0\0\0\0\0\0\0\0
\end{tabular}\\
\VR(1,0)
\caption{\sf\small A basis of $\BB^2(\AA_3)$ consisting of seven $\{0, 1\}$-valued $2$-cocycles; completing with the constant cocycle~$\const_3$, we obtain a basis of~$\ZZ^2(\AA_3)$. Note that, according to Lemma~\ref{L:PenultRow2}, the $7$th rows of the above cocycles must be pairwise distinct.}
\label{T:Psi}
\end{table}

 By construction, the cocycles~$\psi_{\qq, \nn}$ are coboundaries. Relation~\eqref{E:Basis2'} makes it straighforward that we have $\psi_{\qq, \nn} = - \der^2\gamma_{\qq, \nn}$, where $\gamma_{\qq, \nn}$ is the $1$-cochain defined on~$\AA_\nn$ by $\gamma_{\qq, \nn}(\xx) = 1$ if $\xx \Before\nn \qq$ is true, and $\gamma_{\qq, \nn}(\xx) = 0$ otherwise.

It turns out that $2$-cocycles contain a lot of information about the structure of Laver tables---certainly a promising point in view of potential applications. 

\begin{prop}
\label{P:PeriodCocycle}
For every~$\nn$, the $2$-cocycle~$\psi_{2^{\nn-1}, \nn}$, which is also $- \phi_{2^\nn, \nn}$, encodes periods in~$\AA_\nn$ in the sense that, for every~$\pp < 2^\nn$, the value of~$\per_\nn(\pp)$ is the smallest~$\yy$ satisfying $\psi_{2^{\nn-1}, \nn}(\pp, \yy) = 1$. 
\end{prop}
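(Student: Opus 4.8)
The plan is to reduce the statement to the asserted identity $\psi_{2^{\nn-1}, \nn} = -\phi_{2^\nn, \nn}$, and then to read periods directly off the explicit form of $\phi_{2^\nn, \nn}$ provided by Lemma~\ref{L:Delta}. Once that identity is in hand, the conclusion becomes a routine inspection of a single row of the Laver table, so the real content is concentrated in proving the identity.

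To establish $\psi_{2^{\nn-1}, \nn} = -\phi_{2^\nn, \nn}$, I would start from the definition $\psi_{2^{\nn-1}, \nn} = \sum_{\rr \Before\nn 2^{\nn-1}} \phi_{\rr, \nn}$ and first pin down the index set $\{\rr : \rr \Before\nn 2^{\nn-1}\}$. This is exactly where the order-theoretic work of the previous section pays off: Proposition~\ref{P:MinusOne} gives $\pp \Before\nn 2^{\nn-1}$ for every $\pp < 2^\nn$ with $\pp \neq 2^{\nn-1}$, and adding the reflexive relation $2^{\nn-1} \Before\nn 2^{\nn-1}$ together with $2^\nn \not\Before\nn 2^{\nn-1}$ (since $\Col_\nn(2^\nn) = \{2^\nn\}$ by~\eqref{E:LastColumn}) yields $\{\rr : \rr \Before\nn 2^{\nn-1}\} = \{1 \wdots 2^\nn-1\}$ for $\nn \ge 2$; the cases $\nn = 0$ and $\nn = 1$ are vacuous or checked by hand. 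Hence $\psi_{2^{\nn-1}, \nn} = \sum_{\rr=1}^{2^\nn-1} \phi_{\rr, \nn}$, and a telescoping argument closes the gap: summing $\phi_{\rr, \nn}(\xx, \yy) = \delta_{\yy, \rr} - \delta_{\xx \OP\nn \yy, \rr}$ over all $\rr \le 2^\nn$ gives $1 - 1 = 0$, so $\sum_{\rr=1}^{2^\nn-1} \phi_{\rr, \nn} = -\phi_{2^\nn, \nn}$, as desired.

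With the identity available, the explicit value is $\psi_{2^{\nn-1}, \nn}(\pp, \yy) = \delta_{\pp \OP\nn \yy, 2^\nn} - \delta_{\yy, 2^\nn}$. When $\yy = 2^\nn$ both terms equal $1$ by~\eqref{E:LastColumn}, so the value is $0$; when $\yy < 2^\nn$ it equals $1$ exactly when $\pp \OP\nn \yy = 2^\nn$. Thus the smallest $\yy$ with $\psi_{2^{\nn-1}, \nn}(\pp, \yy) = 1$ is the smallest $\yy$ with $\pp \OP\nn \yy = 2^\nn$. By Theorem~\ref{T:Laver}\ITEM2 the row of $\pp$ increases strictly until it first attains $2^\nn$, which happens precisely at $\yy = \per_\nn(\pp)$; and for $\pp < 2^\nn$ this index is not excluded by the constraint $\yy \neq 2^\nn$, since a full period $\per_\nn(\pp) = 2^\nn$ would force the value $1$ into the row of $\pp$, impossible because~\eqref{E:LeftMultiples} makes every entry exceed $\pp \ge 1$. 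Therefore $\per_\nn(\pp)$ is the smallest $\yy$ with $\psi_{2^{\nn-1}, \nn}(\pp, \yy) = 1$, which is the assertion.

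I do not expect a serious obstacle. The only delicate step is identifying $\{\rr : \rr \Before\nn 2^{\nn-1}\}$ with $\{1 \wdots 2^\nn-1\}$, which is where the genuine (and nontrivial) input of Proposition~\ref{P:MinusOne}, resting in turn on the periodicity analysis of Laver tables, is consumed; everything downstream is bookkeeping with Kronecker deltas and the monotonicity of rows.
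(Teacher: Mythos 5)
Your proof is correct and follows essentially the same route as the paper's: identify $\{\rr : \rr \Before\nn 2^{\nn-1}\}$ as $\{1 \wdots 2^\nn{-}1\}$ via Proposition~\ref{P:MinusOne}, use the telescoping fact that $\sum_{\rr} \phi_{\rr,\nn} = 0$ (the paper phrases this as $\psi_{2^\nn,\nn}$ being the zero cocycle) to get $\psi_{2^{\nn-1},\nn} = -\phi_{2^\nn,\nn}$, and then read the period off the row of~$\pp$. Your explicit checks that $\per_\nn(\pp) < 2^\nn$ for $\pp < 2^\nn$ and that the small cases $\nn \le 1$ need separate (trivial) treatment are slightly more careful than the paper's write-up, but the argument is the same.
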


\begin{proof}
By definition, we have $\psi_{2^{\nn-1}, \nn} = \sum_{\rr \Before\nn 2^{\nn-1}} \phi_{\rr, \nn}$, whence, by Proposition~\ref{P:MinusOne},
\begin{equation}
\label{E:PeriodCocycle}
\psi_{2^{\nn-1}, \nn} = \sum_{1 \le \rr < 2^\nn} \phi_{\rr, \nn}.
\end{equation}
On the other hand, we noted just after the proof of Proposition~\ref{P:Basis2} that $\psi_{2^\nn, \nn}$, which is the sum of all cocycles $\phi_{\pp, \nn}$, is the zero cocycle. Comparing with~\eqref{E:PeriodCocycle}, we deduce $\psi_{2^{\nn-1}, \nn} = - \phi_{2^\nn, \nn}$. Now, by definition of~$\phi_{\pp, \nn}$, we see that the equality $\phi_{2^\nn, \nn}(\pp, \yy) = -1$, that is, $\psi_{2^{\nn-1}, \nn}(\pp, \yy) = 1$, is equivalent, for $\yy < 2^\nn$, to $\pp \OP\nn \yy = 2^\nn$, hence to $\per_\nn(\pp) \div \yy$. 
\end{proof}

\begin{prop}
\label{P:ThresholdCocycle}
For every~$\nn$, let $\theta_\nn = \sum_{\rr = 1}^{\rr = 2^{\nn-1}} \phi_{\rr, \nn}$. Then the $2$-cocycle~$\theta_\nn$ encodes theresholds in~$\AA_\nn$ in the sense that, for every~$\pp <  2^{\nn-1}$,  the value of~$\thres_\nn(\pp) +\nobreak 1$ is the smallest integer~$\yy$ satisfying $\theta_{\nn}(\pp, \yy) = 1$. 
\end{prop}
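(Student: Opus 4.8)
The plan is to evaluate $\theta_\nn$ in closed form and then read off the threshold directly from the row description in Proposition~\ref{P:Proj}. Using the definition~\eqref{E:phi} of $\phi_{\rr,\nn}$, I would compute
\[
\theta_\nn(\xx,\yy)=\sum_{\rr=1}^{2^{\nn-1}}\bigl(\delta_{\yy,\rr}-\delta_{\xx\OP\nn\yy,\rr}\bigr).
\]
Since $\yy$ and $\xx\OP\nn\yy$ both lie in $\{1\wdots 2^\nn\}$, the first sum equals $1$ exactly when $\yy\le 2^{\nn-1}$ and the second equals $1$ exactly when $\xx\OP\nn\yy\le 2^{\nn-1}$. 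Hence $\theta_\nn(\xx,\yy)$ is the difference of these two indicators, and in particular $\theta_\nn(\pp,\yy)=1$ holds if and only if $\yy\le 2^{\nn-1}$ while $\pp\OP\nn\yy>2^{\nn-1}$. Thus the statement reduces to showing, for $\pp<2^{\nn-1}$, that $\theta_\nn(\pp,\yy)=0$ for $1\le\yy\le\thres_\nn(\pp)$ and that $\theta_\nn(\pp,\thres_\nn(\pp)+1)=1$.

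Next I would invoke Proposition~\ref{P:Proj}. For $1\le\yy\le\thres_\nn(\pp)$ one has $\pp\OP\nn\yy=\pp\OP{\nn-1}\yy\le 2^{\nn-1}$, and since $\thres_\nn(\pp)\le\per_{\nn-1}(\pp)\le 2^{\nn-1}$ the index $\yy$ is itself $\le 2^{\nn-1}$; both indicators therefore equal $1$ and $\theta_\nn(\pp,\yy)=0$ throughout this range. At $\yy=\thres_\nn(\pp)+1$ I would split into the two cases distinguished after Proposition~\ref{P:Proj}. If $\thres_\nn(\pp)<\per_{\nn-1}(\pp)$, then \eqref{E:Proj} gives $\pp\OP\nn\yy=\pp\OP{\nn-1}\yy+2^{\nn-1}>2^{\nn-1}$; if $\thres_\nn(\pp)=\per_{\nn-1}(\pp)$, the completion formula for the $\pp$th row displayed after Proposition~\ref{P:Proj} gives $\pp\OP\nn\yy=\pp\OP{\nn-1}1+2^{\nn-1}>2^{\nn-1}$. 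In both cases $\pp\OP\nn(\thres_\nn(\pp)+1)>2^{\nn-1}$, so $\theta_\nn(\pp,\thres_\nn(\pp)+1)=1$ once we know that $\thres_\nn(\pp)+1\le 2^{\nn-1}$.

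The only point requiring genuine care---the main obstacle, though a minor one---is this last bound $\thres_\nn(\pp)+1\le 2^{\nn-1}$, which guarantees that the first indicator in $\theta_\nn(\pp,\thres_\nn(\pp)+1)$ is actually $1$. I would obtain it from~\eqref{E:LeftMultiples}: for $\pp<2^{\nn-1}$ every entry of the $\pp$th row of $\AA_{\nn-1}$ is strictly larger than $\pp$, so the value $1$ never occurs in that row; consequently its period $\per_{\nn-1}(\pp)$, being the number of distinct values in one period and a power of~$2$, satisfies $\per_{\nn-1}(\pp)\le 2^{\nn-1}-\pp<2^{\nn-1}$, whence $\per_{\nn-1}(\pp)\le 2^{\nn-2}$. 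Since $\thres_\nn(\pp)\le\per_{\nn-1}(\pp)$, this yields $\thres_\nn(\pp)+1\le 2^{\nn-2}+1\le 2^{\nn-1}$ for $\nn\ge 2$ (the statement being vacuous for $\nn\le 1$). Combining the three observations shows that the smallest $\yy$ with $\theta_\nn(\pp,\yy)=1$ is exactly $\thres_\nn(\pp)+1$, as claimed.
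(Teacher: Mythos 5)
Your proof is correct and follows essentially the same route as the paper: both evaluate $\theta_\nn(\pp,\yy)$ as the difference of the indicator of $\yy\le 2^{\nn-1}$ and the indicator of $\pp\OP\nn\yy\le 2^{\nn-1}$, and both then identify $\thres_\nn(\pp)+1$ as the first column where the $\pp$th row of~$\AA_\nn$ exceeds~$2^{\nn-1}$, your case analysis via~\eqref{E:Proj} being just a more explicit unwinding of that fact. The only real divergence is your derivation of the bound $\thres_\nn(\pp)+1\le 2^{\nn-1}$ through $\per_{\nn-1}(\pp)\le 2^{\nn-2}$, which is valid but longer than the paper's one-line observation that $\pp\OP\nn 2^{\nn-1}=2^{\nn}>2^{\nn-1}$ (since $\per_\nn(\pp)$ divides $2^{\nn-1}$ for $\pp<2^\nn$).
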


\begin{proof}
 Assume $\pp < 2^{\nn-1}$.  By definition, the threshold $\thres_\nn(\pp)$ is the largest integer~$\qq$ such that, for all~$\qq' \le \qq$, one has $\pp \OP\nn \qq' = \pp \OP{\nn-1} \qq'$, or, equivalently, $\pp \OP\nn \qq' \le 2^{\nn-1}$, if such a~$\qq$ exists, and $0$ otherwise.  In other words, $\thres_\nn(\pp) + 1$ is the smallest integer~$\yy$ satisfying $\pp \OP\nn \yy > 2^{\nn-1}$. 

On the other hand, for $\yy \le 2^{\nn-1}$, we find
$$\theta_\nn(\pp, \yy) = \sum_{\rr = 1}^{2^{\nn-1}} \delta_{\yy, \rr} - \sum_{\rr = 1}^{2^{\nn-1}} \delta_{\pp \OP\nn \yy, \rr} = \begin{cases} \ 0 &\mbox{for $\pp \OP\nn \yy \le 2^{\nn-1}$},\\ \ 1 &\mbox{for $\pp \OP\nn \yy > 2^{\nn-1}$},\end{cases}$$
whence the result by merging the values and noting that $\thres_\nn(\pp) + 1 \le 2^{\nn-1}$  holds for every $\pp <  2^{\nn-1}$,  since one has $\pp  \OP\nn 2^{\nn-1} = 2^{\nn}  > 2^{\nn-1}$. 
\end{proof}

 The next easy observation is that the existence of the canonical projection~$\proj_\nn$ from~$\AA_\nn$ to~$\AA_{\nn-1}$ enables one to lift every $2$-cocycle on~$\AA_{\nn-1}$ into a $2$-cocycle on~$\AA_\nn$:

\begin{lemm}
\label{L:Cocycle2}
For every $2$-cocycle~$\phi$ on~$\AA_{\nn-1}$, define
\begin{equation}
\label{E:Cocycle2}
\proj_\nn^*(\phi)(\xx, \yy) = \phi(\xx \MOD{}{2^{\nn-1}}, \yy\MOD{}{2^{\nn-1}}). 
\end{equation}
Then $\proj_\nn^*(\phi)$ is a $2$-cocycle on~$\AA_\nn$.
\end{lemm}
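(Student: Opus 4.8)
The plan is to verify directly that $\proj_\nn^*(\phi)$ satisfies the cocycle identity~\eqref{E:Cocycle}, using that $\proj_\nn$ is a homomorphism. The key observation is that pulling back a cocycle along a homomorphism always produces a cocycle, and the only ingredient needed is the compatibility of~$\proj_\nn$ with the operations, which is exactly Proposition~\ref{P:Proj}\ITEM1. So the proof is a short formal computation rather than anything requiring the subtle combinatorics of Laver tables developed in the preceding sections.

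Concretely, write $\bar\xx$ for $\MOD\xx{2^{\nn-1}}$, and likewise for $\bar\yy, \bar\zz$, and abbreviate $\proj_\nn^*(\phi)$ as~$\phi^*$. First I would record the two facts I need: by Proposition~\ref{P:Proj}\ITEM1, the map $\proj_\nn$ is a homomorphism from~$\AA_\nn$ to~$\AA_{\nn-1}$, so $\MOD{(\xx \OP\nn \yy)}{2^{\nn-1}} = \bar\xx \OP{\nn-1} \bar\yy$ for all~$\xx, \yy$. Using this, each of the four terms in the cocycle identity for~$\phi^*$ rewrites as a value of~$\phi$ on the reduced arguments: for instance $\phi^*(\xx \OP\nn \yy, \xx \OP\nn \zz) = \phi(\bar\xx \OP{\nn-1} \bar\yy, \bar\xx \OP{\nn-1} \bar\zz)$, and similarly $\phi^*(\xx, \zz) = \phi(\bar\xx, \bar\zz)$, $\phi^*(\xx, \yy \OP\nn \zz) = \phi(\bar\xx, \bar\yy \OP{\nn-1} \bar\zz)$, and $\phi^*(\yy, \zz) = \phi(\bar\yy, \bar\zz)$.

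Then the cocycle condition~\eqref{E:Cocycle} for~$\phi^*$ at $(\xx, \yy, \zz)$ becomes
\begin{equation*}
\phi(\bar\xx \OP{\nn-1} \bar\yy, \bar\xx \OP{\nn-1} \bar\zz) + \phi(\bar\xx, \bar\zz) = \phi(\bar\xx, \bar\yy \OP{\nn-1} \bar\zz) + \phi(\bar\yy, \bar\zz),
\end{equation*}
which is precisely~\eqref{E:Cocycle} for~$\phi$ evaluated at the triple $(\bar\xx, \bar\yy, \bar\zz)$ in~$\AA_{\nn-1}$. Since $\phi$ is assumed to be a $2$-cocycle for~$\AA_{\nn-1}$, this holds for all choices of $\bar\xx, \bar\yy, \bar\zz$, and in particular for those arising as reductions of elements of~$\AA_\nn$. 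Hence $\phi^* = \proj_\nn^*(\phi)$ satisfies~\eqref{E:Cocycle} for all $\xx, \yy, \zz$ in~$\AA_\nn$, so it is a $2$-cocycle on~$\AA_\nn$, as claimed.

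There is no real obstacle here: the statement is the standard functoriality of cohomology under the pullback by a homomorphism, and the homomorphism property needed is already guaranteed by Proposition~\ref{P:Proj}\ITEM1. The only point deserving a moment's care is confirming that the surjectivity of~$\proj_\nn$ is irrelevant to this direction (pullback of a cocycle is a cocycle for any homomorphism), and that the homomorphism identity $\MOD{(\xx \OP\nn \yy)}{2^{\nn-1}} = \bar\xx \OP{\nn-1} \bar\yy$ is applied consistently to every term before comparing with~\eqref{E:Cocycle}.
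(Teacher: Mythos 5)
Your proof is correct and takes essentially the same approach as the paper, which simply remarks that the result is clear because $\proj_\nn$ is a homomorphism with respect to~$\OP\nn$ and~$\OP{\nn-1}$. Your computation just spells out that remark: pulling back along the homomorphism turns the cocycle identity~\eqref{E:Cocycle} for~$\proj_\nn^*(\phi)$ into the same identity for~$\phi$ at the reduced arguments.
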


The result is clear, as the projection~$\proj_\nn$ is a homomophism with respect to the operations~$\OP\nn$ and~$\OP{\nn-1}$. Note that the table of $\proj_\nn^*(\phi)$ is a $2^\nn \times 2^\nn$ square paved by four copies of the table of~$\phi$.

We should therefore be able to express in our distinguished bases of~$\ZZ^2(\AA_\nn)$ the lifted images of the cocycles of the corresponding distinguished bases of~$\ZZ^2(\AA_{\nn-1})$. This  is indeed  easy in the case of  the two  families considered so far. 

\begin{prop}
\label{P:Lifting}
For $\nn \ge 1$ and $1 \le \pp \le 2^{\nn-1}$, we have 
\begin{gather}
\label{E:Lifting1}
\proj_\nn^*(\phi_{\pp, \nn-1}) = \phi_{\pp, \nn} + \phi_{\pp + 2^{\nn-1}, \nn},\\
\label{E:Lifting2}
\proj_\nn^*(\psi_{\pp, \nn-1}) = \psi_{\pp + 2^{\nn-1}, \nn}.
\end{gather}
\end{prop}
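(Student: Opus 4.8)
The plan is to establish \eqref{E:Lifting1} first, by a direct computation with Kronecker deltas, and then to deduce \eqref{E:Lifting2} from it using the $\ZZZZ$-linearity of~$\proj_\nn^*$ (which is immediate, being precomposition with the fixed homomorphism~$\proj_\nn$) together with a description of the down-sets of~$\Before\nn$ in terms of those of~$\Before{\nn-1}$.

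For \eqref{E:Lifting1}, I would write $\bar\xx = \MOD\xx{2^{\nn-1}}$ and $\bar\yy = \MOD\yy{2^{\nn-1}}$ and unfold both sides on a generic pair~$(\xx, \yy)$. By \eqref{E:Cocycle2} and \eqref{E:phi}, the left-hand side is $\phi_{\pp, \nn-1}(\bar\xx, \bar\yy) = \delta_{\bar\yy, \pp} - \delta_{\bar\xx \OP{\nn-1} \bar\yy, \pp}$. On the right, grouping the two coboundaries gives $(\delta_{\yy, \pp} + \delta_{\yy, \pp + 2^{\nn-1}}) - (\delta_{\xx \OP\nn \yy, \pp} + \delta_{\xx \OP\nn \yy, \pp+2^{\nn-1}})$. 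The crucial elementary observation is that, for any $\zz$ in~$\{1 \wdots 2^\nn\}$ and any~$\pp$ in~$\{1 \wdots 2^{\nn-1}\}$, one has $\delta_{\zz, \pp} + \delta_{\zz, \pp+2^{\nn-1}} = \delta_{\MOD\zz{2^{\nn-1}}, \pp}$, simply because $\MOD\zz{2^{\nn-1}} = \pp$ holds exactly when $\zz$ lies in~$\{\pp, \pp+2^{\nn-1}\}$ (the boundary value $\pp = 2^{\nn-1}$ being covered as well, since then the two witnesses are $2^{\nn-1}$ and~$2^\nn$). Applying this with $\zz = \yy$ handles the first parenthesis, and applying it with $\zz = \xx \OP\nn \yy$, combined with the homomorphism identity $\MOD{\xx \OP\nn \yy}{2^{\nn-1}} = \bar\xx \OP{\nn-1} \bar\yy$ from Proposition~\ref{P:Proj}\ITEM1, handles the second; the two sides then coincide.

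For \eqref{E:Lifting2}, since $\psi_{\pp, \nn-1} = \sum_{\rr \Before{\nn-1}\pp} \phi_{\rr, \nn-1}$, linearity of~$\proj_\nn^*$ and formula \eqref{E:Lifting1} give $\proj_\nn^*(\psi_{\pp, \nn-1}) = \sum_{\rr \Before{\nn-1}\pp} (\phi_{\rr, \nn} + \phi_{\rr+2^{\nn-1}, \nn})$. It therefore suffices to show that the index set on the right is exactly the down-set $\{\tt : \tt \Before\nn \pp + 2^{\nn-1}\}$ defining $\psi_{\pp+2^{\nn-1}, \nn}$. The key step is the equivalence
$$\tt \Before\nn \pp + 2^{\nn-1} \quad\Longleftrightarrow\quad \MOD\tt{2^{\nn-1}} \Before{\nn-1} \pp \qquad (1 \le \tt \le 2^\nn).$$
By the definition of~$\Before\nn$, the left side reads $\pp + 2^{\nn-1} \in \Col_\nn(\tt)$; as $\pp + 2^{\nn-1} > 2^{\nn-1}$, Relation~\eqref{E:Distinct1} identifies $\Col_\nn(\tt) \cap \{2^{\nn-1}+1 \wdots 2^\nn\}$ with $\Col_{\nn-1}(\MOD\tt{2^{\nn-1}}) + 2^{\nn-1}$, so this is equivalent to $\pp \in \Col_{\nn-1}(\MOD\tt{2^{\nn-1}})$, i.e. to $\MOD\tt{2^{\nn-1}} \Before{\nn-1}\pp$. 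Hence the down-set of~$\pp+2^{\nn-1}$ is the full preimage under reduction mod~$2^{\nn-1}$ of the down-set of~$\pp$, namely $\{\rr, \rr+2^{\nn-1} : \rr \Before{\nn-1}\pp\}$; matching the two sums yields~\eqref{E:Lifting2}.

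The computations are all routine, so the only genuinely structural input is the displayed equivalence, and the main point of the argument is to invoke~\eqref{E:Distinct1} correctly there. I would also record the boundary case $\pp = 2^{\nn-1}$, where both sides of~\eqref{E:Lifting2} are the zero cocycle: indeed $\psi_{2^{\nn-1}, \nn-1}$ and $\psi_{2^\nn, \nn}$ are each a sum of \emph{all} the relevant~$\phi_{\rr, \cdot}$ (since $2^{\nn-1}$, \resp $2^\nn$, is the top element of the ordering) and hence vanish, exactly as noted after the proof of Proposition~\ref{P:Basis2}; so this case needs no separate treatment.
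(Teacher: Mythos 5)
Your proof is correct and takes essentially the same route as the paper's: \eqref{E:Lifting1} via the Kronecker-delta identity $\delta_{\zz,\pp}+\delta_{\zz,\pp+2^{\nn-1}}=\delta_{\MOD\zz{2^{\nn-1}},\pp}$ combined with Proposition~\ref{P:Proj}, and \eqref{E:Lifting2} via linearity of~$\proj_\nn^*$ plus the equivalence $\tt \Before\nn \pp+2^{\nn-1} \Leftrightarrow \MOD\tt{2^{\nn-1}} \Before{\nn-1} \pp$. The only cosmetic difference is that you justify this equivalence in one step from~\eqref{E:Distinct1}, whereas the paper chains \eqref{E:ProjOrder} with~\eqref{E:DistinctPrecise}; since \eqref{E:ProjOrder} is itself derived from~\eqref{E:Distinct1}, the underlying content is identical.
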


\begin{proof}
Since the cocycles $\phi_{\pp, \nn}$ are defined in terms of the maps $\xx \mapsto  \delta_{\xx, \pp}$, let us describe the behaviour of the latter with respect to lifting. For $1 \le \pp \le 2^{\nn-1}$ and $1 \le \xx,\yy \le 2^{\nn}$, we have (putting as usual $\bar\xx = \MOD\xx{2^{\nn-1}}$)
$ \delta_{\bar\xx, \pp} = \delta_{\xx, \pp} + \delta_{\xx, \pp+ 2^{\nn-1}},$ therefore,
\begin{align*}
 \proj_\nn^*(\phi_{\pp, \nn-1})(\xx, \yy) &=  \delta_{\bar\yy, \pp} -  \delta_{\bar\xx \OP{\nn-1} \bar\yy,\pp} \\
 &= \delta_{\yy, \pp} + \delta_{\yy, \pp+ 2^{\nn-1}} - \delta_{\bar\xx \OP{\nn-1} \bar\yy, \pp} - \delta_{\bar\xx \OP{\nn-1} \bar\yy, \pp+ 2^{\nn-1}}\\
 &= \delta_{\yy, \pp} + \delta_{\yy, \pp+ 2^{\nn-1}} - \delta_{\xx \OP\nn \yy, \pp} - \delta_{\xx \OP\nn \yy, \pp+ 2^{\nn-1}} \\
 &=\phi_{\pp, \nn}(\xx, \yy) + \phi_{\pp + 2^{\nn-1}, \nn}(\xx, \yy),
 \end{align*}
 since $\xx \OP\nn \yy$  is either~$\bar\xx \OP{\nn-1} \bar\yy$ or~$\bar\xx \OP{\nn-1} \bar\yy+ 2^{\nn-1}$.

The liftings of  the cocycles~$\psi_{\qq, \nn}$  are now calculated using those of the  the cocycles~$\phi_{\pp, \nn}$. First, we find 
\begin{align*}
\proj_\nn^*(\psi_{\pp, \nn-1}) &= \sum_{\rr \le 2^{\nn-1},\rr \Before{\nn-1} \pp} \proj_\nn^*(\phi_{\rr, {\nn-1}}) = \sum_{\rr \le 2^{\nn-1},\rr \Before{\nn-1} \pp} (\phi_{\rr, \nn} + \phi_{\rr + 2^{\nn-1}, \nn})
\end{align*}
On the other hand,  we have 
\begin{align*}
\psi_{\pp + 2^{\nn-1}, \nn} &= \sum_{\rr \le 2^{\nn},\rr \Before{\nn} \pp+2^{\nn-1}} \phi_{\rr, {\nn}}.
\end{align*}
To conclude, it is sufficient to show that, for  all  $\rr \le 2^{\nn}$ and $\pp \le 2^{\nn-1}$,  the  condition $\rr \Before{\nn} \pp+2^{\nn-1}$ is equivalent to $\bar\rr \Before{\nn-1} \pp$.  Now,  due to~\eqref{E:ProjOrder}, the latter is the same as $\bar\rr+2^{\nn-1} \Before{\nn} \pp+2^{\nn-1}$. On the other hand, due to~\eqref{E:DistinctPrecise}, the element $\pp+2^{\nn-1}$,  which is larger than~$2^{\nn-1}$,  belongs either to the columns of both~$\bar\rr$ and~$\bar\rr+2^{\nn-1}$ in~$\AA_\nn$, or to neither of them. Thus $\bar\rr+2^{\nn-1} \Before{\nn} \pp+2^{\nn-1}$ is equivalent to $\rr \Before{\nn} \pp+2^{\nn-1}$, as desired. 
\end{proof}

So we see in particular that the family $\{\psi_{1, \nn} \wdots \psi_{2^\nn-1, \nn}\}$ consists of $2^{\nn-1}$ ``really new'' cocycles, plus $2^{\nn-1}-1$ cocycles that are liftings of cocycles on~$\AA_{\nn-1}$.

Of course, we can compose projections. For all $\mm \le \nn$, writing $\proj_{\nn, \mm}$ for the projection $\xx \mapsto \MOD\xx{2^\mm}$ from~$\AA_\nn$ to~$\AA_\mm$, we obtain a lifting $\proj_{\mm, \nn}^*$ from~$\ZZ^2(\AA_\mm)$ to~$\ZZ^2(\AA_\nn)$. Thus, for instance, among the $2^\nn{-}1$ cocycles~$\psi_{\pp, \nn}$, the last $2^{\nn-1}{-}1$ ones come from~$\ZZ^2(\AA_{\nn-1})$, among which the last $2^{\nn-2}{-}1$ ones come from~$\ZZ^2(\AA_{\nn-2})$, \textit{etc}. Also note that, trivially, the constant cocycles~$\const_\nn$ are liftings of one another: for all $\mm \le \nn$, we have $\const_\nn = \proj_{\nn, \mm}^*(\const_\mm)$.

\begin{exam}
Up to a multiplicative constant, there exists only one non-constant $2$-cocycle on~$\AA_1$, namely the cocycle~$\psi_{1, 1}$. It takes the value~$1$ for $\xx = \yy = 1$ only. Using Proposition~\ref{P:Lifting} repeatedly, we deduce for every~$\nn \ge 1$
\begin{equation}
\label{E:Parity}
\proj_{\nn,1}^*(\psi_{1, 1})  = \psi_{2^\nn-1, \nn}.
\end{equation}
By construction, this cocycle detects parity, in the sense that $\phi(\xx, \yy) = 1$ holds if and only if both~$\xx$ and $\yy$~are odd.
\end{exam}

 We showed above in Lemma~\ref{L:PenultRow2} that a $2$-cocycle for~$\AA_\nn$ is determined by its penultimate row. As a last observation, let us mention that it is also determined by its first column. Indeed, it is not hard to check that a $2$-cocycle whose first column is trivial must be the zero cocycle. For instance, the reader can check in Table~\ref{T:Psi} that the first columns of the cocycles~$\psi_{\qq, 3}$ with $1 \le \qq \le 7$ are linearly independent over~$\ZZZZ$.

\section{Three-cocycles for Laver tables}
\label{S:Three}

We conclude the paper with a study of $3$-cocycles and degree~$3$ rack cohomology for Laver tables.  According to~\eqref{E:Der4}, describing the $\ZZZZ$-valued $3$-cocycles for~$\AA_\nn$ amounts to  searching for maps $\phi: \{1 \wdots 2^\nn\}^3 \to \ZZZZ$ that satisfy 
\begin{align}
\label{E:Cocycle4}
\phi(\xx \op \yy, \xx \op \zz, \xx \op \tt) + &\phi(\xx, \yy, \zz \op \tt) + \phi(\xx, \zz, \tt)\\ 
\notag
&= \phi(\xx, \yy \op \zz, \yy \op \tt) + \phi(\yy, \zz, \tt) + \phi(\xx, \yy, \tt).
\end{align}
We keep omitting subscripts~$\Rack$ and coefficients~$\ZZZZ$ for brevity. 
 
We follow the same  scheme as for $2$-cocycles, omitting the details of the proofs when they are analogous to the $2$-cocycle case. First, let us describe basic $3$-coboundaries.
 
\begin{lemm}
\label{L:Delta3}
For $1 \le \pp, \qq \le 2^\nn$, define $\phi_{\pp,\qq, \nn}$ by 
\begin{equation}
\label{E:phi3}
\phi_{\pp,\qq, \nn}(\xx, \yy, \zz) = \delta_{\pp, \yy}\delta_{\qq, \zz} - \delta_{\pp,\xx \OP\nn \yy}\delta_{\qq, \xx \OP\nn \zz} - \delta_{\pp, \xx}\delta_{\qq, \zz} + \delta_{\pp,\xx}\delta_{\qq, \yy \OP\nn \zz}.
\end{equation}
Then, for all~$\pp,\qq$, the map~$\phi_{\pp,\qq, \nn}$ defines a $3$-coboundary on~$\AA_\nn$.
\end{lemm}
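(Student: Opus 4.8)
The plan is to imitate the proof of Lemma~\ref{L:Delta} one dimension higher: to recognise $\phi_{\pp,\qq,\nn}$ as the image under $\der_\Rack^3$ of an explicit $2$-cochain, so that membership in $\BB^3(\AA_\nn)$ is immediate. Throughout I would write $\op$ for $\OP\nn$, as in the earlier proofs. The guiding observation is that each of the four terms of~\eqref{E:phi3} is the single ``bidelta'' function $(\aa,\bb) \mapsto \delta_{\pp,\aa}\delta_{\qq,\bb}$ evaluated at one of the argument-pairs $(\yy,\zz)$, $(\xx\op\yy,\xx\op\zz)$, $(\xx,\zz)$, $(\xx,\yy\op\zz)$ --- and these are precisely the four pairs that occur in formula~\eqref{E:Der3} for $\der_\Rack^3$. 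This makes the correct $2$-cochain transparent.

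Concretely, I would set
\[
\theta_{\pp,\qq}(\xx, \yy) = -\delta_{\pp, \xx}\,\delta_{\qq, \yy},
\]
the natural degree-$2$ analogue of the $1$-cochain $\theta(\xx) = -\delta_{\xx,\qq}$ used in Lemma~\ref{L:Delta}. Substituting $\theta_{\pp,\qq}$ into~\eqref{E:Der3} and reading off the four contributions with their signs gives
\[
\der_\Rack^3\theta_{\pp,\qq}(\xx,\yy,\zz) = -\delta_{\pp,\xx\op\yy}\,\delta_{\qq,\xx\op\zz} - \delta_{\pp,\xx}\,\delta_{\qq,\zz} + \delta_{\pp,\xx}\,\delta_{\qq,\yy\op\zz} + \delta_{\pp,\yy}\,\delta_{\qq,\zz},
\]
and a term-by-term comparison shows this coincides with the right-hand side of~\eqref{E:phi3}. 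Hence $\phi_{\pp,\qq,\nn} = \der_\Rack^3\theta_{\pp,\qq}$ lies in the image of $\der_\Rack^3$, so $\phi_{\pp,\qq,\nn}$ is a $3$-coboundary. Since $\BB^3(\AA_\nn) \subseteq \ZZ^3(\AA_\nn)$ by Corollary~\ref{C:DistrHom} (every coboundary is a cocycle), this also confirms that each $\phi_{\pp,\qq,\nn}$ is in particular a $3$-cocycle, which is what matters for the subsequent basis construction.

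There is no genuine obstacle here. The only ``work'' is guessing $\theta_{\pp,\qq}$ correctly and keeping careful track of the four signs when matching the terms, exactly as in the $2$-cocycle case; the overall sign $-1$ in $\theta_{\pp,\qq}$ is fixed by requiring the positive term $\delta_{\pp,\yy}\delta_{\qq,\zz}$ to come out with the right sign. Everything is a routine verification once the cochain is written down, so in the final write-up I would simply state $\theta_{\pp,\qq}$, invoke~\eqref{E:Der3}, and record the equality $\phi_{\pp,\qq,\nn} = \der_\Rack^3\theta_{\pp,\qq}$.
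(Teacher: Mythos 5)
Your proposal is correct and is essentially identical to the paper's own proof: the paper also takes the $2$-cochain $\theta(\xx,\yy) = -\delta_{\pp,\xx}\delta_{\qq,\yy}$ and observes via~\eqref{E:Der3} that $\phi_{\pp,\qq,\nn} = \der_\Rack^3\theta$. Your explicit term-by-term verification and sign check simply spell out what the paper leaves as a one-line observation.
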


\begin{proof}
Let $\theta$ be the $2$-cochain on~$\AA_\nn$ defined by~$\theta(\xx,\yy) = -\delta_{\pp, \xx}\delta_{\qq, \yy}$. Then \eqref{E:Der3} shows that $\phi_{\pp,\qq, \nn}$ is equal to~$\der^3\theta$. Hence $\phi_{\pp,\qq, \nn}$ belongs to~$\BB^3(\AA_\nn)$.
\end{proof}

Our next goal is to choose a subfamily of coboundaries from Lemma~\ref{L:Delta3} which would form a basis of~$\BB^3(\AA_\nn)$, completed into abasis of~$\ZZ^3(\AA_\nn)$ when a constant cocycle is added. 

\begin{prop}
\label{P:Basis3}
For every~$\nn$, the group~$\ZZ^3(\AA_\nn)$ is a free abelian group of rank $2^{\nn}(2^\nn-1)+1$,  with a basis consisting of the coboundaries $\phi_{\pp,\qq, \nn}$ with $\pp \neq \ppn$ and the constant $3$-cocycle $\const_\nn$ with value~$1$. Omitting the constant cocycle, one  obtains  a basis of $\BB^3(\AA_\nn)$. Moreover, $\HH^3(\AA_\nn) \cong \ZZZZ$ holds. 
\end{prop}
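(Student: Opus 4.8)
The plan is to follow, almost verbatim, the scheme of the proof of Proposition~\ref{P:Basis1}, with the penultimate row $\phi(\pn,\cdot)$ replaced by the \emph{slice} $\phi(\pn,\cdot,\cdot)$ at $\xx=\pn$; as before I write $\pn$ for $\ppn$ and $\op$ for $\OP\nn$. Two auxiliary facts will carry the whole argument. The first is the $3$-cocycle analogue of Lemma~\ref{L:PenultRow2}: a $3$-cocycle $\phi$ whose slice $\phi(\pn,\yy,\zz)$ vanishes identically is the zero cocycle. I would prove it by substituting $\xx=\pn$ into~\eqref{E:Cocycle4}; since $\pn\op\ww=2^\nn$ for every $\ww$ by~\eqref{E:LastRows}, the first summand collapses to $\phi(2^\nn,2^\nn,2^\nn)$ and the identity rearranges into
$$\phi(\yy,\zz,\tt)=\phi(2^\nn,2^\nn,2^\nn)+\phi(\pn,\yy,\zz\op\tt)+\phi(\pn,\zz,\tt)-\phi(\pn,\yy\op\zz,\yy\op\tt)-\phi(\pn,\yy,\tt),$$
which expresses $\phi$ through its slice and the single constant $\phi(2^\nn,2^\nn,2^\nn)$. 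A vanishing slice then forces $\phi$ to be constant with that value; evaluating the (now constant) $\phi$ at $(\pn,2^\nn,2^\nn)$ identifies the value with $\phi(\pn,2^\nn,2^\nn)=0$, so $\phi\equiv0$.

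The second fact is the one structural constraint that every $3$-cocycle imposes on its slice: the map $\zz\mapsto\phi(\pn,\pn,\zz)$ is constant. I would obtain it by setting $\xx=\yy=\pn$ in~\eqref{E:Cocycle4}, which after~\eqref{E:LastRows} reduces to $\phi(\pn,\pn,\zz\op\tt)=\phi(\pn,\pn,\tt)+\phi(\pn,2^\nn,2^\nn)-\phi(2^\nn,2^\nn,2^\nn)$; choosing $\tt=2^\nn$ and using~\eqref{E:LastColumn} kills the bracket, and choosing $\tt=1$, for which $\zz\op1=\MOD{\zz+1}{2^\nn}$ runs through all of $\{1\wdots2^\nn\}$ by~\eqref{E:Laver1}, yields the stated constancy. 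This is $2^\nn-1$ independent scalar conditions, exactly the codimension by which the attainable slices fall short of the full $2^{2\nn}$-dimensional space of functions on $\{1\wdots2^\nn\}^2$, which already explains the announced rank $2^{2\nn}-2^\nn+1$.

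With these in hand the rest parallels the $2$-cocycle case. I compute the slices of the proposed generators from~\eqref{E:phi3}: for $\pp\notin\{\pn,2^\nn\}$ one finds the pure coordinate indicator $\phi_{\pp,\qq,\nn}(\pn,\yy,\zz)=\delta_{\pp,\yy}\delta_{\qq,\zz}$, while the boundary case gives $\phi_{2^\nn,\qq,\nn}(\pn,\yy,\zz)=\delta_{2^\nn,\yy}\delta_{\qq,\zz}-\delta_{\qq,2^\nn}$, and $\const_\nn$ contributes the all-ones slice. A direct check then shows that any linear combination of these slices is automatically constant on the line $\yy=\pn$ (matching the constraint above) and, conversely, that any slice obeying that constraint is matched by a unique choice of coefficients, with the coefficient of $\const_\nn$ forced to equal $\mu=\phi(\pn,2^\nn,2^\nn)$ and the remaining coefficients read off from the values $\phi(\pn,\yy,\zz)$ with $\yy\neq\pn$. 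Feeding an arbitrary cocycle $\phi$ through this recipe produces a combination $\Phi$ of the generators agreeing with $\phi$ on the entire slice at $\xx=\pn$, and the first auxiliary fact forces $\phi=\Phi$, giving spanning; the same matching run with $\phi=0$ forces all coefficients to vanish, giving freeness. Since the $\phi_{\pp,\qq,\nn}$ are coboundaries by Lemma~\ref{L:Delta3} and every coboundary $\der^3\theta$ satisfies $\der^3\theta(\pn,2^\nn,2^\nn)=0$ by~\eqref{E:Der3}, \eqref{E:LastRows}, and~\eqref{E:LastColumn} (so its $\const_\nn$-coefficient $\mu$ vanishes), the subfamily $\{\phi_{\pp,\qq,\nn}:\pp\neq\pn\}$ is a basis of $\BB^3(\AA_\nn)$; as $\ZZ^3(\AA_\nn)$ adds exactly the one further basis vector $\const_\nn$, the quotient $\HH^3(\AA_\nn)$ is free of rank one.

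The main obstacle I anticipate is pinning down the correct slice constraint and verifying that the chosen index range $\pp\neq\pn$ makes the generators' slices span \emph{exactly} the constrained subspace. Unlike the $2$-cocycle situation, where the penultimate-row slices of the $\phi_{\qq,\nn}$ are clean coordinate functions, here the coboundaries with $\pp=2^\nn$ carry the extra correction term $-\delta_{\qq,2^\nn}$, so the matching is a genuine (triangular and invertible) linear system rather than a direct read-off. Getting this bookkeeping right—in particular confirming that ``$\phi(\pn,\pn,\cdot)$ constant'' is the \emph{only} constraint and has codimension precisely $2^\nn-1$—is where the care is needed.
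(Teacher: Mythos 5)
Your proposal is correct and follows essentially the same route as the paper: your two auxiliary facts are exactly the paper's Lemmas~\ref{L:FirstColumn3} and~\ref{L:LastRow3'} (with the needed instance of Lemma~\ref{L:LastRow3}, namely $\phi(\ppn,2^\nn,2^\nn)=\phi(2^\nn,2^\nn,2^\nn)$, derived inline), and your slice-matching argument reproduces the paper's construction of~$\phit$, the only cosmetic difference being that the paper absorbs the correction term $-\delta_{\qq,2^\nn}$ by replacing $\phi_{2^\nn,2^\nn,\nn}$ with $\phi'_{2^\nn,2^\nn,\nn}=\phi_{2^\nn,2^\nn,\nn}+\const_\nn$, whereas you solve the resulting triangular system directly (your coefficient $\mu=\phi(\ppn,2^\nn,2^\nn)$ for~$\const_\nn$ is indeed what the paper's decomposition becomes when re-expanded in the unprimed generators). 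The bookkeeping you flagged as the delicate point does close exactly as you describe, so there is no gap.
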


As in the case of $2$-cocycles, the proof is based on several auxiliary results.

\begin{lemm}
\label{L:LastRow3}
Assume that $\phi$ is a $3$-cocycle for~$\AA_\nn$. Then, for $1 \le \xx \le 2^\nn$, we have $\phi(\xx,2^\nn,2^\nn)= \phi(\ppn, 2^\nn,2^\nn)$. 
\end{lemm}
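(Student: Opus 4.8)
The plan is to imitate the proof of Lemma~\ref{L:LastColumn2}, using the $3$-cocycle relation~\eqref{E:Cocycle4} in place of the $2$-cocycle relation. As there, write $\pn$ for~$\ppn$ and $\op$ for~$\OP\nn$. I would apply~\eqref{E:Cocycle4} to the quadruple $(\pn, \xx, 2^\nn, 2^\nn)$, the point being that this choice makes both of the terms $\phi(\pn, 2^\nn, 2^\nn)$ and $\phi(\xx, 2^\nn, 2^\nn)$ that we wish to compare appear on the two sides of the identity.

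First I would record the relevant special values. By~\eqref{E:LastRows} we have $\pn \op \xx = 2^\nn$ for every~$\xx$, while~\eqref{E:LastColumn} gives $\pp \op 2^\nn = 2^\nn$ for every~$\pp$; in particular $\xx \op 2^\nn = 2^\nn$, $\pn \op 2^\nn = 2^\nn$, and $2^\nn \op 2^\nn = 2^\nn$. Substituting $(\pn, \xx, 2^\nn, 2^\nn)$ into~\eqref{E:Cocycle4} and simplifying each of the six arguments with these identities yields
\begin{equation*}
\phi(2^\nn, 2^\nn, 2^\nn) + \phi(\pn, \xx, 2^\nn) + \phi(\pn, 2^\nn, 2^\nn) = \phi(\pn, 2^\nn, 2^\nn) + \phi(\xx, 2^\nn, 2^\nn) + \phi(\pn, \xx, 2^\nn).
\end{equation*}
Cancelling the common summands $\phi(\pn, \xx, 2^\nn)$ and $\phi(\pn, 2^\nn, 2^\nn)$ from both sides leaves $\phi(2^\nn, 2^\nn, 2^\nn) = \phi(\xx, 2^\nn, 2^\nn)$, so $\phi(\xx, 2^\nn, 2^\nn)$ is independent of~$\xx$; taking $\xx = \pn$ then gives the claim.

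I do not anticipate any real obstacle: the argument is a direct computation once the substitution is fixed. The only thing requiring a little care is the choice of the quadruple, which must be arranged so that, after simplification, the two \emph{parasitic} terms $\phi(\pn, \xx, 2^\nn)$ and $\phi(\pn, 2^\nn, 2^\nn)$ occur on both sides and cancel, leaving exactly the comparison we need; the substitution $(\pn, \xx, 2^\nn, 2^\nn)$ achieves this because the last two entries are both~$2^\nn$, forcing every~$\op$ in~\eqref{E:Cocycle4} to collapse to~$2^\nn$ via~\eqref{E:LastColumn} and~\eqref{E:LastRows}.
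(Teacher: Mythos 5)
Your proof is correct and is essentially identical to the paper's own argument: the same substitution $(\ppn, \xx, 2^\nn, 2^\nn)$ into~\eqref{E:Cocycle4}, the same simplifications via $\ppn \OP\nn \xx = 2^\nn$ and $\xx \OP\nn 2^\nn = 2^\nn$, and the same cancellation of the two parasitic terms. Nothing is missing.
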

 
\begin{proof}
 As before, we write~$\pn$ for~$2^\nn{-}1$  and $\op$ for~$\OP\nn$. Applying~\eqref{E:Cocycle4} to $(\pn, \xx, 2^\nn, 2^\nn)$  yields
\begin{align*}
\phi( \pn \op \xx ,  \pn \op 2^\nn,  \pn \op 2^\nn) + &\phi( \pn, \xx , 2^\nn \op 2^\nn) + \phi( \pn, 2^\nn, 2^\nn)\\ 
&= \phi( \pn, \xx  \op 2^\nn, \xx  \op 2^\nn) + \phi(\xx , 2^\nn, 2^\nn) + \phi( \pn, \xx , 2^\nn).
\end{align*}
 Using the relations $\pn \op \xx = 2^\nn$ and $\xx \op 2^\nn = 2^\nn$, four terms disappear and it just remains $\phi(2^\nn, 2^\nn,2^\nn)=\phi(\xx , 2^\nn, 2^\nn)$. 
\end{proof} 

\begin{lemm}
\label{L:LastRow3'}
Assume that $\phi$ is a $3$-cocycle for~$\AA_\nn$. Then, for $1 \le \zz \le 2^\nn$, we have $\phi(\ppn, \ppn, \zz)= \phi(\ppn, \ppn, \ppn)$.
\end{lemm}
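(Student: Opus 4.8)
The plan is to imitate the proof of Lemma~\ref{L:LastRow3}, this time feeding the quadruple $(\pn, \pn, \zz, \tt)$ into the $3$-cocycle identity~\eqref{E:Cocycle4}, where I write $\pn$ for~$\ppn$ and $\op$ for~$\OP\nn$. The whole computation is driven by the relation $\pn \op \yy = 2^\nn$, valid for every~$\yy$ by~\eqref{E:LastRows}: it collapses the three entries $\pn \op \pn$, $\pn \op \zz$, $\pn \op \tt$ all to~$2^\nn$, so that~\eqref{E:Cocycle4} becomes
\begin{equation*}
\phi(2^\nn, 2^\nn, 2^\nn) + \phi(\pn, \pn, \zz \op \tt) + \phi(\pn, \zz, \tt) = \phi(\pn, 2^\nn, 2^\nn) + \phi(\pn, \zz, \tt) + \phi(\pn, \pn, \tt).
\end{equation*}

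First I would cancel the common term $\phi(\pn, \zz, \tt)$ appearing on both sides. Then, the only genuine input beyond bookkeeping is Lemma~\ref{L:LastRow3} applied with $\xx = 2^\nn$, which gives $\phi(2^\nn, 2^\nn, 2^\nn) = \phi(\pn, 2^\nn, 2^\nn)$; this lets me discard those two terms as well. What remains is simply
\begin{equation*}
\phi(\pn, \pn, \zz \op \tt) = \phi(\pn, \pn, \tt) \qquad\text{for all } \zz, \tt.
\end{equation*}

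To finish, I would specialise to $\tt = 1$, obtaining $\phi(\pn, \pn, \zz \op 1) = \phi(\pn, \pn, 1)$ for every~$\zz$. Since $\zz \op 1 = \MOD{\zz + 1}{2^\nn}$ by~\eqref{E:Laver1}, the map $\zz \mapsto \zz \op 1$ is the cyclic shift on~$\{1 \wdots 2^\nn\}$, hence a bijection; as $\zz$ ranges over~$\AA_\nn$, the argument $\zz \op 1$ sweeps out every element. Therefore $\phi(\pn, \pn, \cdot)$ is constant, equal in particular to $\phi(\pn, \pn, \pn)$, which is the claim. I do not expect any real obstacle here: the one nonroutine point is choosing the substitution $(\pn, \pn, \zz, \tt)$ and recognising that Lemma~\ref{L:LastRow3} is exactly what kills the leftover $2^\nn$-terms; everything else is cancellation.
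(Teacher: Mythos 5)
Your proof is correct, and it follows the same overall strategy as the paper---feed a $\pn$-heavy quadruple into~\eqref{E:Cocycle4} (writing $\pn$ for~$\ppn$), collapse terms via $\pn \op \yy = 2^\nn$ from~\eqref{E:LastRows}, then kill the two leftover $2^\nn$-terms with Lemma~\ref{L:LastRow3}---but with a different substitution. The paper plugs in $(\pn, \pn, \pn, \zz)$: after the same two cancellations this leaves $\phi(\pn,\pn,\zz) = \phi(\pn,\pn,2^\nn)$ outright, since the free variable sits in the last slot, and the conclusion is immediate with no further input. Your substitution $(\pn,\pn,\zz,\tt)$ instead yields the two-parameter identity $\phi(\pn,\pn,\zz\op\tt)=\phi(\pn,\pn,\tt)$, which is not yet the claim; you close the gap by specialising to $\tt=1$ and invoking the bijectivity of $\zz\mapsto\zz\op 1$ from~\eqref{E:Laver1}. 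That extra step is sound---surjectivity of the cyclic shift is all you need---so the only cost relative to the paper is one additional easy argument; in exchange your intermediate identity is marginally stronger than what the paper extracts, though nothing later in the paper requires it.
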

 
\begin{proof}
 As usual, put $\pn = \ppn$.  Applying~\eqref{E:Cocycle4} to $(\pn, \pn, \pn, \zz)$ yields 
\begin{align*}
\phi( \pn \op \pn,  \pn \op \pn,  \pn \op \zz) + &\phi( \pn, \pn, \pn \op \zz) + \phi( \pn, \pn, \zz)\\ 
&= \phi( \pn, \pn \op \pn, \pn \op \zz) + \phi(\pn, \pn, \zz) + \phi(\pn, \pn, \zz),
\end{align*}
 leaving 
\begin{align*}
\phi( 2^\nn, 2^\nn,  2^\nn) + \phi( \pn, \pn, 2^\nn) &= \phi( \pn, 2^\nn, 2^\nn) + \phi(\pn, \pn, \zz).
\end{align*}
According to Lemma~\ref{L:LastRow3}, one has $\phi( 2^\nn, 2^\nn,  2^\nn) = \phi( \pn, 2^\nn,  2^\nn)$, and  we deduce  $\phi(\pn, \pn,\zz)=\phi(\pn, \pn, 2^\nn)$.
\end{proof} 

In other words, the values of a $3$-cocycle  for~$\AA_\nn$  on  the  triples $(\xx,2^\nn,2^\nn)$ and $(\ppn, \ppn, \zz)$  do not depend  on~$\xx$ and~$\zz$.  We now establish that a $3$-cocycle that vanishes on triples starting with~$\ppn$ must be trivial.

\begin{lemm}
\label{L:FirstColumn3}
Assume that $\phi$ is a $3$-cocycle for~$\AA_\nn$ and $\phi(\ppn, \yy,\zz) = 0$ holds for all~$\yy, \zz$  with $1 \le \yy, \zz \le 2^\nn$.  Then $\phi$ is the zero cocycle.
\end{lemm}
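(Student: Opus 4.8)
The plan is to mimic the $2$-cocycle argument of Lemma~\ref{L:PenultRow2}, exploiting the drastic collapse that the row of~$\ppn$ induces. Throughout I would write $\pn$ for~$\ppn$ and $\op$ for~$\OP\nn$, and recall from~\eqref{E:LastRows} that $\pn \op \ww = 2^\nn$ for every~$\ww$. The single tool will be the cocycle relation~\eqref{E:Cocycle4} evaluated at a quadruple whose first entry is~$\pn$, so that this collapse kicks in simultaneously on all three inner slots.

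Concretely, I would apply~\eqref{E:Cocycle4} to $(\pn, \yy, \zz, \tt)$. On the left-hand side the leading term $\phi(\pn \op \yy, \pn \op \zz, \pn \op \tt)$ becomes $\phi(2^\nn, 2^\nn, 2^\nn)$ by the above, while the two remaining left-hand terms $\phi(\pn, \yy, \zz \op \tt)$ and $\phi(\pn, \zz, \tt)$ both start with~$\pn$. On the right-hand side the terms $\phi(\pn, \yy \op \zz, \yy \op \tt)$ and $\phi(\pn, \yy, \tt)$ again start with~$\pn$, leaving only $\phi(\yy, \zz, \tt)$. Since every summand whose first argument is~$\pn$ vanishes by hypothesis, the identity degenerates to
\[
\phi(2^\nn, 2^\nn, 2^\nn) = \phi(\yy, \zz, \tt),
\]
valid for all~$\yy, \zz, \tt$. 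Thus $\phi$ is forced to be constant on~$\AA_\nn^3$.

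It then remains only to pin down that this constant is~$0$, and the hypothesis finishes the job at once: specializing the displayed equality to $\yy = \pn$ gives $\phi(2^\nn, 2^\nn, 2^\nn) = \phi(\pn, \zz, \tt) = 0$, whence $\phi(\yy, \zz, \tt) = 0$ for every triple. I do not anticipate a genuine obstacle here: the whole force of the argument is that the substitution $\xx = \pn$ simultaneously trivializes four of the six terms of~\eqref{E:Cocycle4} and turns the fifth into the single value $\phi(2^\nn, 2^\nn, 2^\nn)$, so the proof closes in essentially one line and does not even require the preparatory Lemmas~\ref{L:LastRow3} and~\ref{L:LastRow3'} (those will instead feed the rank count in Proposition~\ref{P:Basis3}). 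The only point demanding care is the bookkeeping of which of the six terms of~\eqref{E:Cocycle4} have first argument~$\pn$, and this is purely mechanical.
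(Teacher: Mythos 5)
Your proof is correct and is essentially the paper's own argument: both evaluate the cocycle relation~\eqref{E:Cocycle4} on a quadruple whose first entry is~$\ppn$, so that \eqref{E:LastRows} together with the vanishing hypothesis kills every term except $\phi(2^\nn,2^\nn,2^\nn)$ and the single free term, forcing $\phi$ to be constant. The only difference is cosmetic: the paper identifies that constant as zero by invoking the earlier Lemma~\ref{L:LastRow3}, whereas you obtain it by specializing your derived identity at $\yy=\ppn$, which is self-contained and equally valid.
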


\begin{proof}
 Applying~\eqref{E:Cocycle4} to $(\pn, \xx, \yy, \zz)$ (with $\pn = \ppn$)  yields
\begin{align*}
\label{E:Cocycle4}
\phi(\pn \op \xx, \pn \op \yy, \pn \op \zz) + &\phi(\pn, \xx, \yy \op \zz) + \phi(\pn, \yy, \zz)\\ 
&= \phi(\pn, \xx \op \yy, \xx \op \zz) + \phi(\xx, \yy, \zz) + \phi(\pn, \xx, \zz),
\end{align*}
hence $\phi(2^\nn, 2^\nn, 2^\nn) = \phi(\xx, \yy, \zz)$, owing to~\eqref{E:LastRows} and the assumption on~$\phi$. Lemma~\ref{L:LastColumn2} gives $\phi(2^\nn, 2^\nn, 2^\nn) = \phi(\pn,2^\nn, 2^\nn)$, which is zero, again by the assumption on~$\phi$. One concludes that $\phi$ is zero on~$\AA_\nn \times \AA_\nn \times \AA_\nn$.
\end{proof}

The argument can now be completed. 
 
\begin{proof}[Proof of Proposition~\ref{P:Basis3}]
 We once more put $\pn = \ppn$.  We start with evaluating the $3$-coboundaries $\phi_{\pp,\qq, \nn}$ on triples $(\pn,\yy,\zz)$.By definition,  we have
\begin{align*} 
\phi_{\pp,\qq, \nn}(\pn, \yy, \zz) &= \delta_{\pp, \yy}\delta_{\qq, \zz} - \delta_{\pp,\pn \op \yy}\delta_{\qq, \pn \op \zz} - \delta_{\pp, \pn}\delta_{\qq, \zz} + \delta_{\pp,\pn}\delta_{\qq, \yy \op \zz}\\
&= \delta_{\pp, \yy}\delta_{\qq, \zz} - \delta_{\pp,2^\nn}\delta_{\qq, 2^\nn} - \delta_{\pp, \pn}\delta_{\qq, \zz} + \delta_{\pp,\pn}\delta_{\qq, \yy \op \zz}.
\end{align*}
For~$\pp <\pn$, most terms vanish, and  there remains 
\begin{align*}
\phi_{\pp,\qq, \nn}(\pn, \yy, \zz) &= \delta_{\pp, \yy}\delta_{\qq, \zz}.
\end{align*}
For~$\pp = 2^\nn$,  we find 
\begin{align*}
\phi_{2^\nn,\qq, \nn}(\pn, \yy, \zz) &= \delta_{2^\nn, \yy}\delta_{\qq, \zz} - \delta_{\qq, 2^\nn},
\end{align*}
which again simplifies  into  $\delta_{\pp, \yy}\delta_{\qq, \zz}$ for~$\qq \neq 2^\nn$. In order to treat the case $\qq = 2^\nn$ in a similar way,  we define for $1 \le \pp, \qq \le 2^\nn$ and $\pp \not= \pn$ new cocycles $\phi'_{\pp,\qq, \nn}$ by
\begin{equation*}
\phi'_{\pp,\qq, \nn} = \begin{cases}
\phi_{\pp, \qq, \nn}&\mbox{for $(\pp, \qq) \not= (2^\nn, 2^\nn)$},\\
\phi_{2^\nn,2^\nn, \nn}+ \const_\nn &\mbox{for $(\pp, \qq) = (2^\nn, 2^\nn)$}.\end{cases}
\end{equation*}
 Then one easily checks that the equality 
\begin{equation}
\label{E:ophi}
\phi'_{\pp,\qq, \nn}(\pn, \yy, \zz) = \delta_{\pp, \yy}\delta_{\qq, \zz}
\end{equation}
 is valid for all~$\pp, \qq$ with $\pp \not= \pn$.

Now take an arbitrary $3$-cocycle~$\phi$. Trying to approximate it with the cocycles we are interested in, put
\begin{equation*}
\VR(4,5)\phit = \smash{\sum_{\pp \neq \pn}\sum_{\qq}} (\phi(\pn, \pp, \qq) -\phi(\pn, \pn,\pn))\phi' _{\pp,\qq, \nn} + \phi(\pn, \pn,\pn) \const_\nn.
\end{equation*}
 Using~\eqref{E:ophi}, we see that the cocycle $\phi-\phit$ vanishes on  all triples $(\pn, \yy, \zz)$ with $\yy \not= \pn$ and on~$(\pn, \pn, \pn)$. On the other hand,  Lemma~\ref{L:LastRow3'} asserts that  $\phi-\phit$ vanishes on all triples $(\pn, \pn, \zz)$. Merging the results, we deduce that $\phi-\phit$  vanishes on all triples $(\pn, \yy, \zz)$, hence, according to Lemma~\ref{L:FirstColumn3}, it is the zero cocycle. This proves that the coboundaries~$\phi_{\pp,\qq, \nn}$ with~$\pp \neq \pn$  together with  the constant $3$-cocycle~$\const_\nn$ generate~$\ZZ^3(\AA_\nn)$. 

 The freeness of the above family is established exactly as in the case of $2$-cocycles, and so are the assertions for~$\BB^3(\AA_\nn)$. The only difficulty consists in showing that the coboundaries~$\phi_{\pp,\qq, \nn}$ with~$\pp \neq \pn$ generate~$\BB^3(\AA_\nn)$. For this, we observe that all coboundaries vanish on the triple $(\pn,2^\nn,2^\nn)$, whereas the constant $3$-cocycle does not.
\end{proof}

We conclude  with  some observations about the values taken by the cocycles~$\phi_{\pp,\qq, \nn}$ and, more generally, by the cocycles occurring in a basis of~$\ZZ^3(\AA_\nn)$. 

\begin{prop}
\label{P:Not2}
For all~$\nn, \pp,\qq$, the $3$-cocycle $\phi_{\pp,\qq, \nn}$ evaluated on a triple from~$\AA_\nn^3$ can only take the value~$0$, $+1$, or $-1$. 
\end{prop}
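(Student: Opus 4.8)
The plan is to read the value of $\phi_{\pp,\qq,\nn}(\xx,\yy,\zz)$ straight off its defining expression \eqref{E:phi3}, which is a signed sum of four products of two Kronecker deltas. Write $P_1 = \delta_{\pp,\yy}\delta_{\qq,\zz}$ and $P_2 = \delta_{\pp,\xx}\delta_{\qq,\yy\OP\nn\zz}$ for the two terms carrying a $+$ sign, and $N_1 = \delta_{\pp,\xx\OP\nn\yy}\delta_{\qq,\xx\OP\nn\zz}$ and $N_2 = \delta_{\pp,\xx}\delta_{\qq,\zz}$ for the two terms carrying a $-$ sign, so that the value in question equals $P_1 + P_2 - N_1 - N_2$. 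Each of these four quantities lies in $\{0,1\}$, so \emph{a priori} the value lies in $\{-2,-1,0,1,2\}$, and the whole content of the statement is to exclude $\pm 2$. Since $P_1 + P_2 \le 2$ and $N_1 + N_2 \le 2$, the value equals $+2$ only when $P_1 = P_2 = 1$ and $N_1 = N_2 = 0$, and equals $-2$ only when $N_1 = N_2 = 1$ and $P_1 = P_2 = 0$; every other combination already gives a value in $\{-1,0,1\}$. So it suffices to rule out these two extreme configurations.

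First I would dispose of the value $+2$. If $P_1 = P_2 = 1$, the equality $P_1 = 1$ forces $\yy = \pp$ and $\zz = \qq$, while $P_2 = 1$ forces $\xx = \pp$. But then $\xx = \pp$ and $\zz = \qq$ give $N_2 = \delta_{\pp,\xx}\delta_{\qq,\zz} = 1$, contradicting the requirement $N_2 = 0$. Hence the value $+2$ is never attained.

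Next I would dispose of the value $-2$. If $N_1 = N_2 = 1$, then $N_2 = 1$ gives $\xx = \pp$ and $\zz = \qq$, and $N_1 = 1$ gives $\xx \OP\nn \yy = \pp$, that is $\pp \OP\nn \yy = \pp$. This is the one place where the arithmetic of Laver tables is needed: by \eqref{E:LeftMultiples} one has $\pp < \pp \OP\nn \yy$ whenever $\pp < 2^\nn$, so $\pp \OP\nn \yy = \pp$ is impossible unless $\pp = 2^\nn$; and for $\pp = 2^\nn$ the identity $2^\nn \OP\nn \yy = \yy$ from \eqref{E:LastRows} then forces $\yy = 2^\nn = \pp$. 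Consequently $P_1 = \delta_{\pp,\yy}\delta_{\qq,\zz} = \delta_{2^\nn,2^\nn}\delta_{\qq,\qq} = 1$, contradicting the requirement $P_1 = 0$. Thus $-2$ is never attained either, and the value of $\phi_{\pp,\qq,\nn}$ on any triple is $0$, $+1$, or $-1$.

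The argument is essentially combinatorial bookkeeping on the four deltas, and I do not expect a genuine obstacle beyond keeping the signs and the two symmetric cases straight. The only nonformal ingredient—and precisely the step that collapses the $-2$ case—is the appeal to \eqref{E:LeftMultiples} and \eqref{E:LastRows} to exclude the fixed-point equation $\pp \OP\nn \yy = \pp$ for $\pp < 2^\nn$; without it the bound $\pm 2$ would be the best one could extract from the delta count alone.
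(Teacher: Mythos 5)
Your proof is correct and follows essentially the same route as the paper's: both reduce the claim to excluding the values $\pm 2$, dispose of $+2$ by noting that $\delta_{\pp,\yy}\delta_{\qq,\zz} = \delta_{\pp,\xx}\delta_{\qq,\yy \OP\nn \zz} = 1$ forces $\delta_{\pp,\xx}\delta_{\qq,\zz} = 1$, and dispose of $-2$ by using \eqref{E:LeftMultiples} and \eqref{E:LastRows} to show that the fixed-point equation $\xx \OP\nn \yy = \xx$ forces $\xx = \yy = 2^\nn$ and hence a contradiction with $\delta_{\pp,\yy}\delta_{\qq,\zz} = 0$. No gaps; your explicit $P_i$/$N_i$ bookkeeping is just a cleaner presentation of the identical argument.
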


\begin{proof}
 First, by the definition of~\eqref{E:phi3}, the value of~$\phi_{\pp,\qq, \nn}(\xx, \yy, \zz)$ for $\xx, \yy, \zz$ in~$\AA_\nn$ must lie in $\{0, \pm 1, \pm 2 \}$. We shall prove that $\pm2$ is impossible. 
 
Indeed, $\phi_{\pp,\qq, \nn}(\xx, \yy, \zz) = 2$ would  require 
\begin{align*}
\delta_{\pp, \yy}\delta_{\qq, \zz} = \delta_{\pp,\xx}\delta_{\qq, \yy \op \zz} =1 \mbox{\quad and \quad} \delta_{\pp,\xx \op \yy}\delta_{\qq, \xx \op \zz} = \delta_{\pp, \xx}\delta_{\qq, \zz} =0.
\end{align*}
 The first two equalities imply  $\yy = \pp = \xx$  and $\zz = \qq$, whence $\delta_{\pp, \xx}\delta_{\qq, \zz} = 1$, which contradicts the last equality.

Similarly,  $\phi_{\pp,\qq, \nn}(\xx, \yy, \zz) = -2$ would  require
\begin{align*}
\delta_{\pp, \yy}\delta_{\qq, \zz} = \delta_{\pp,\xx}\delta_{\qq, \yy \op \zz} =0  \mbox{\quad and \quad}  \delta_{\pp,\xx \op \yy}\delta_{\qq, \xx \op \zz} = \delta_{\pp, \xx}\delta_{\qq, \zz} =1.
\end{align*}
 The last two equalities imply  $\xx \op \yy = \pp =\xx$ and $\zz = \qq$, which, by~\eqref{E:LeftMultiples} and~\eqref{E:LastRows},  can occur only for $\xx = \yy = 2^\nn$, in which case we deduce $\delta_{\pp, \yy}\delta_{\qq, \zz} = 1$, which contradicts the first equality. 
\end{proof}

However, contrary to the case of~$\ZZ^2(\AA_\nn)$, we cannot expect to find for every~$\nn$ a basis of~$\ZZ^3(\AA_\nn)$ consisting of $\{0, 1\}$-valued cocycles.

\begin{prop}
\label{P:NotZeroOne}
There is no basis of $\ZZ^3(\AA_1)$ consisting of cocycles whose values on~$\AA_1\times\AA_1\times\AA_1$ lie in~$\{0,1\}$.
\end{prop}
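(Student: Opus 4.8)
The plan is to compute $\ZZ^3(\AA_1)$ completely by hand, exploiting that $\AA_1=\{1,2\}$ is tiny. A $3$-cochain is just a function $\phi\colon\{1,2\}^3\to\ZZZZ$, hence is determined by its eight values, and the cocycle condition \eqref{E:Cocycle4} is a finite system of linear equations obtained by letting $(\xx,\yy,\zz,\tt)$ range over $\{1,2\}^4$. First I would record the multiplication of $\AA_1$ in the convenient form $2\op\yy=\yy$, $1\op\yy=2$, and $\yy\op 2=2$ (so $2$ is a left unit, while $1$ sends everything to $2$); substituting these into \eqref{E:Cocycle4} collapses each of the sixteen instances to a short relation among the values $\phi(\xx,\yy,\zz)$.

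After the routine bookkeeping I expect the system to reduce to the constancy relations $\phi(1,2,2)=\phi(2,1,2)=\phi(2,2,1)=\phi(2,2,2)$ and $\phi(1,1,1)=\phi(1,1,2)$, together with the one genuinely non-diagonal relation
$$\phi(2,1,1)+\phi(1,2,1)=2\,\phi(2,2,2).$$
Writing $s=\phi(2,2,2)$, $u=\phi(1,1,1)$ and $v=\phi(1,2,1)$, every $3$-cocycle is then parametrised by $(u,v,s)\in\ZZZZ^3$ via $\phi(1,1,1)=\phi(1,1,2)=u$, $\phi(1,2,1)=v$, $\phi(2,1,1)=2s-v$, and $\phi(1,2,2)=\phi(2,1,2)=\phi(2,2,1)=\phi(2,2,2)=s$. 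As this solution space is free of rank $3$, and Proposition~\ref{P:Basis3} already gives $\operatorname{rk}\ZZ^3(\AA_1)=3$, the listed relations are exhaustive and the parametrisation describes $\ZZ^3(\AA_1)$ exactly. (As a sanity check, $\const_1$ is the case $u=v=s=1$, while the basis cocycle $\phi_{2,1,1}$ of Proposition~\ref{P:Basis3} is the case $u=0,v=1,s=0$, which indeed takes the value $-1$ at $(2,1,1)$.)

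The final step is to impose that all eight values lie in $\{0,1\}$. The non-diagonal relation $\phi(2,1,1)+\phi(1,2,1)=2s$ becomes rigid: its right-hand side is $0$ or $2$, so if $s=0$ then $\phi(2,1,1)=\phi(1,2,1)=0$, and if $s=1$ then $\phi(2,1,1)=\phi(1,2,1)=1$; in either case $\phi(2,1,1)=\phi(1,2,1)=s$. Hence a $\{0,1\}$-valued cocycle has all six of its entries outside the block $\phi(1,1,\cdot)$ equal to $s\in\{0,1\}$, while $\phi(1,1,1)=\phi(1,1,2)=u\in\{0,1\}$; equivalently, every $\{0,1\}$-valued cocycle equals $uE_1+sE_2$, where $E_1$ is the cocycle with $\phi(1,1,1)=\phi(1,1,2)=1$ and all other values $0$, and $E_2$ is the cocycle with those two values $0$ and the remaining six equal to $1$. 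Thus all $\{0,1\}$-valued cocycles lie in the rank-$2$ submodule $\ZZZZ E_1\oplus\ZZZZ E_2$. A basis of $\ZZ^3(\AA_1)$ must generate a free module of rank $3$, and cannot be drawn from a subgroup of rank $2$; this is the desired contradiction.

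The computation itself is only a finite check, so the point to get right is the logic of the last paragraph: the obstruction is a lattice-rank phenomenon, driven by the relation carrying the coefficient $2$ (equivalently, by the forced $-1$ in $\phi_{2,1,1}$), which confines all $\{0,1\}$-valued cocycles to a proper-rank sublattice. I expect the only thing worth stressing is that there are in fact just three nonzero $\{0,1\}$-valued cocycles, namely $E_1$, $E_2$, and $E_1+E_2=\const_1$, which makes it transparent that they span only rank $2$ and hence cannot form a basis of $\ZZ^3(\AA_1)$.
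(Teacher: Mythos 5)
Your proof is correct, and it takes a genuinely different route from the paper's. The paper argues top-down from the basis $\{\phi_{2,1,1},\phit_{2,2,1},\const_1\}$ supplied by Proposition~\ref{P:Basis3}: writing any purported basis element as $\zeta=\lambda\phi_{2,1,1}+\mu\phit_{2,2,1}+\nu\const_1$, noting that $\lambda\neq0$ must occur for at least one element, and observing that then $\zeta(1,2,1)-\zeta(2,1,1)=2\lambda$, which is impossible when both values lie in $\{0,1\}$. You argue bottom-up: you solve the sixteen instances of~\eqref{E:Cocycle4} over~$\AA_1$ directly, and your claimed reduction is indeed what comes out (the full system is equivalent to $\phi(1,2,2)=\phi(2,1,2)=\phi(2,2,1)=\phi(2,2,2)$, $\phi(1,1,1)=\phi(1,1,2)$, and $\phi(2,1,1)+\phi(1,2,1)=2\phi(2,2,2)$); you then classify the $\{0,1\}$-valued cocycles completely---they are exactly $0$, $E_1$, $E_2$, and $E_1+E_2=\const_1$, where in fact $E_1=\phit_{2,2,1}$---and conclude by a rank comparison, since these all lie in the rank-$2$ lattice $\ZZZZ E_1\oplus\ZZZZ E_2$ whereas a basis must generate the rank-$3$ group $\ZZ^3(\AA_1)$. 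Both proofs turn on the same arithmetic phenomenon, namely a linear relation with coefficient~$2$ tying the values at $(1,2,1)$ and $(2,1,1)$ (in the coordinates of the paper's basis your relation reads ``sum $=2\nu$'' where the paper uses ``difference $=2\lambda$''); what yours buys is the complete classification of $\{0,1\}$-valued cocycles and independence from the explicit shape of the paper's basis (you only need the rank statement of Proposition~\ref{P:Basis3}), while the paper's proof is shorter because its basis and the values~\eqref{E:NotZeroOne} are already in hand.

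One logical soft spot to repair in your write-up: you justify that your parametrisation describes $\ZZ^3(\AA_1)$ \emph{exactly} by matching ranks, but equal rank does not force equality of nested lattices (consider $2\ZZZZ^3\subset\ZZZZ^3$). The honest justification is that the finite check shows your five relations are \emph{equivalent} to the full cocycle system, not merely implied by it. Fortunately, your contradiction only uses the implication ``cocycle $\Rightarrow$ relations'', so the argument itself is unaffected; exactness is needed only for the side remark that $E_1$ and $E_2$ are themselves cocycles.
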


\begin{proof}
Put $\phit_{2,2,1}=-\phi_{2,2,1}-\phi_{2,1,1}$. By Proposition~\ref{P:Basis3}, $\{\phi_{2,1,1}, \phi_{2,2,1}, \const_1\}$ is a basis of~$\ZZ^3(\AA_1)$, hence so is $\{\phi_{2,1,1}, \phit_{2,2,1}, \const_1\}$. Now, direct computations give  
\begin{equation}
\label{E:NotZeroOne}
\phi_{2,1,1} \begin{cases}
=1 &\mbox{on $(1, 2, 1)$},\\
=-1 &\mbox{on $(2, 1, 1)$},\\
=0 &\mbox{elsewhere}, \end{cases}
\quad
\phit_{2,2,1} \begin{cases}
= 1 &\mbox{on $(1, 1, 1)$ and $(1, 1, 2)$},\\
= 0 &\mbox{elsewhere}. \end{cases}
\end{equation}
Let $\{\zeta_1, \zeta_2, \zeta_3\}$ be an arbitrary basis of~$\ZZ^3(\AA_1)$. For every~$\ii$, there exist integers~$\lambda_\ii, \mu_\ii, \nu_\ii$ satisfying $\zeta_\ii = \lambda_\ii \phi_{2,1,1} + \mu_\ii \phit _{2,2,1} + \nu_\ii \const_1$, and $\lambda_\ii \not= 0$ holds for at least one~$\ii$. For such an index~$\ii$, \eqref{E:NotZeroOne} gives $\zeta_\ii(1,2,1) - \zeta_\ii(2,1,1) = 2\lambda_\ii$, which is possible only if $\zeta_\ii(1,2,1)$ or $\zeta_\ii(2,1,1)$ does not lie in~$\{0, 1\}$.
\end{proof}

We shall not go further here. Most steps in the proof of Proposition~\ref{P:Basis3} can be extended to the case of $\kk$-cocycles with $\kk \ge 4$. However, finding minimal vanishing conditions for $\kk$-cocycles seems to be a bottleneck. Extending Lemma~\ref{L:FirstColumn3}, one can check that a $\kk$-cocycle for~$\AA_\nn$ that is zero on all $\kk$-tuples starting with~$\ppn$ must be trivial, but this condition is not minimal: for instance, in the case $\kk = 3$, for a cocycle to be zero on the triples $(\ppn, \yy, \zz)$ with $\yy \not= \ppn$ and on $(\ppn,\ppn,\ppn)$ is sufficient to deduce that it is zero everywhere, and this is what results in the rank $2^{2\nn} - 2^\nn + 1$ for~$\ZZ^3(\AA_\nn)$. A similar analysis is possible for $\kk = 4$, resulting in the rank $2^{3\nn} -  2^{2\nn} +   2^\nn$ for~$\ZZ^4(\AA_\nn)$, but the general case remains unclear at the moment.


\end{document}